\providecommand{\U}[1]{\protect\rule{.1in}{.1in}}
\newcommand {\N}{{{\mathbb N}}}
\newcommand {\R}{{\mathbb R}}
\newcommand {\T}{{\mathbb T}}
\newcommand {\vanish}[1]{\relax}
\newtheorem{theorem}{Theorem}[section]
\newtheorem{lemma}[theorem]{Lemma}
\newtheorem{proposition}[theorem]{Proposition}
\newtheorem{corollary}[theorem]{Corollary}
\newtheorem{op}[theorem]{Open Problem}
\theoremstyle{definition}
\newtheorem{definition}[theorem]{Definition}
\newtheorem{remark}[theorem]{Remark}
\numberwithin{equation}{section}
\protected
\def\ignorethis#1\endignorethis{}
\let\endignorethis\relax
\begin{document}
\title[Bourgain--Brezis--Mironescu--Maz'ya--Shaposhnikova limit formulae]{Bourgain--Brezis--Mironescu--Maz'ya--Shaposhnikova limit formulae for fractional
Sobolev spaces via interpolation and extrapolation}
\author{Oscar Dom\'inguez}
\address{O. Dom\'inguez, Universit\'e Lyon 1, Institut Camille Jordan, 43 blvd. du 11 novembre 1918, F-69622
Villeurbanne cedex, France; Departamento de An\'alisis Matem\'atico y Matem\'atica Aplicada, Facultad de Matem\'aticas, Universidad Complutense de Madrid, Plaza de Ciencias 3, 28040 Madrid, Spain}
\email{dominguez@math.univ-lyon1.fr}
\email{oscar.dominguez@ucm.es}

\author{Mario Milman}
\address{M. Milman, Instituto Argentino de Matematica\\
Buenos Aires\\
Argentina}
\email{mario.milman@icloud.com}
\urladdr{https://sites.google.com/site/mariomilman/}
\subjclass{Primary: 26A33, 46B20, 46B70; Secondary: 42B05, 42B10}
\keywords{Triebel--Lizorkin spaces, fractional Sobolev spaces, limits, interpolation, extrapolation}

\begin{abstract}
The real interpolation spaces between $L^{p}({\mathbb{R}}^{n})$ and $\dot
{H}^{t,p}({\mathbb{R}}^{n})$ (resp. $H^{t,p}({\mathbb{R}}^{n})$), \, $t>0,$ are
characterized in terms of fractional moduli of smoothness, and the underlying
seminorms are shown to be \textquotedblleft the correct\textquotedblright%
\ fractional generalization of the classical Gagliardo seminorms. This is
confirmed by the fact that, using the new spaces combined with interpolation
and extrapolation methods, we are able to extend the
Bourgain--Brezis--Mironescu--Maz'ya--Shaposhnikova limit formulae, as well as the
Bourgain--Brezis--Mironescu convergence theorem, to fractional Sobolev spaces.
On the other hand, we disprove a conjecture of \cite{Braz} suggesting
fractional convergence results given in terms of classical Gagliardo
seminorms. We also solve a problem proposed in \cite{Braz} concerning sharp
forms of the fractional Sobolev embedding.

\end{abstract}
\maketitle

\section{Preamble}

\subsection{The Bourgain--Brezis--Mironescu--Maz'ya--Shaposhnikova limit formulae}

In their celebrated paper \cite{BBM}, Bourgain--Brezis--Mironescu studied limits
of Gagliardo semi-norms defined by\footnote{We assume that $p>1,$ since, as in
\cite{Braz}, our main focus will be the fractional Sobolev spaces defined via
$(-\Delta)^{\frac{t}{2}}.$ $\ $The case $p=1$ requires a separate treatment.}%
\begin{equation}
\Vert f\Vert_{\dot{W}^{s,p}({\mathbb{R}}^{n})}=\left(  \int_{{\mathbb{R}}%
^{n}}\int_{{\mathbb{R}}^{n}}\frac{\left\vert f(x)-f(y)\right\vert ^{p}%
}{\left\vert x-y\right\vert ^{n+sp}}dxdy\right)^{1/p},\qquad s\in(0,1),\quad p\in
(1,\infty), \label{gag1}%
\end{equation}
and showed that while%

\begin{equation}
\lim_{s\rightarrow1^{-}}\Vert f\Vert_{\dot{W}^{s,p}({\mathbb{R}}^{n})}<\infty\iff
f\quad\text{is constant}, \label{Constancy}%
\end{equation}
the Gagliardo seminorms can be normalized so that the $L^{p}$ norm of the
gradient can be recovered: For $f\in W_{p}^{1}({\mathbb{R}}^{n})$, it holds
\begin{equation}
\lim_{s\rightarrow1^{-}}(1-s)^{\frac{1}{p}}\Vert f\Vert_{\dot{W}^{s,p}%
({\mathbb{R}}^{n})}=C(p,n)\Vert\nabla f\Vert_{L^{p}({\mathbb{R}}^{n})},
\label{BBM}%
\end{equation}
where $C(p,n)$ is a constant that depends only on $p$ and $n,$ and can be
computed explicitly.

Later, Maz'ya--Shaposhnikova \cite{Mazya} used a different normalization to
deal with the case where the (smoothness) parameter $s$ tends to zero, and
proved that
\begin{equation}
\lim_{s\rightarrow0^{+}}s^{\frac{1}{p}}\Vert f\Vert_{\dot{W}^{s,p}({\mathbb{R}%
}^{n})}=c(p,n)\Vert f\Vert_{L^{p}({\mathbb{R}}^{n})}. \label{MS}%
\end{equation}

These results have sparked intense research interest and have been used and
extended in many different directions (for recent related material,
applications and references cf. \cite{Braz}, \cite{BN}, \cite{BGT}, \cite{cejas}, \cite{GT}, \cite{polam}, \cite{Xu}). The
original proofs were somewhat complicated, and the process of finding the
correct normalizations involved ad-hoc considerations. For example, the
correct normalization for (\ref{MS}) was found using the sharp form of the
Sobolev embedding and, conversely, the limit result (\ref{BBM}) suggested the
sharp Fractional Sobolev inequality obtained in \cite{BBM1}.

In \cite{Milman}, \cite{KaradzhovMilmanXiao}, it was observed that these
limiting results can be naturally understood in the more general context of
scales of interpolation spaces. Given a pair of compatible spaces
$(X_{0},X_{1}),$ interpolation methods (cf. \cite{BerghLofstrom}) give rise to
scales of spaces, in particular, the scale $(X_{0},X_{1})_{s,q},$
$s\in(0,1),q\in\lbrack1,\infty],$ produced by the real method\footnote{See
Section \ref{SectionMethod} for background information on interpolation and
extrapolation.}, will play an important r\^{o}le in this paper. We
colloquially refer to $X_{0}$ and $X_{1}$ as \textquotedblleft end point
spaces\textquotedblright,\ and we think that, as the parameter $s$ varies from
$0$ to $1,$ we are moving from one end point space in the scale, to the other.
In this setting, the limit theorems take the form of a continuity result that
can be informally interpreted as saying that \textquotedblleft with
appropriate normalizations we can recover the end point
spaces\textquotedblright\ (cf. Theorem \ref{teomarkao}),
\begin{equation}
\lim_{s\rightarrow0^{+}} c_{s,q} \left\Vert f\right\Vert _{(X_{0},X_{1}%
)_{s,q}}=\sup_{t>0}K(t,f;X_{0},X_{1}); \quad \text{ }\lim_{s\rightarrow1^{-}}%
c_{s,q}\left\Vert f\right\Vert _{(X_{0},X_{1})_{s,q}}=\sup_{t>0}%
\frac{K(t,f;X_{0},X_{1})}{t}. \label{m1}%
\end{equation}
For many pairs of spaces $(X_{0},X_{1}),$ we actually have estimates: For
$f\in X_{0}\cap X_{1},$\footnote{Throughout the paper, the symbol $f \lesssim g$ indicates the existence of a constant $c > 0$, which is independent of all essential parameters, such that $f \leq c \, g$. The symbol $f \approx g$ means that $f \lesssim g$ and $g \lesssim f$.}
\begin{equation}
\sup_{t>0}K(t,f;X_{0},X_{1})\approx\left\Vert f\right\Vert _{X_{0}},\text{
\ \ \ \ \ \ }\sup_{t>0}\frac{K(t,f;X_{0},X_{1})}{t}\approx\left\Vert
f\right\Vert _{X_{1}}. \label{m4}%
\end{equation}
Furthermore, if the pair is
\textquotedblleft exact\textquotedblright \,  then one can replace the equivalences signs in \eqref{m4} by equalities\footnote{From this point of view, the constants that appear in (\ref{BBM})
and (\ref{MS}) are connected to the computation of the underlying
$K$-functionals.} (cf. \cite{Milman}, \cite{KaradzhovMilmanXiao}).

From the interpolation point of view, (\ref{BBM}) and (\ref{MS}) are not
separate limiting results, but part of the recovery of end point norms of a
particular interpolation scale. Moreover, we note that the limiting formulae
(\ref{m1}) are somewhat stronger, in the sense that we are free to choose the
index $q,$ which indeed, is not dependent on the original pair of spaces, as
it is the case in (\ref{BBM}) or (\ref{MS}), where $q=p.$

In fact, the normalization constants produced by interpolation are
\textquotedblleft universal\textquotedblright, in the sense that they are
independent of the originating pair of spaces and have an explicit
formula\footnote{There is a method to define and compute the normalizations in
order insure the continuity of the interpolation scales. In particular, the normalizations
for other methods of interpolation are also known. For example, for the
complex method of Calder\'{o}n, $c_{s}=1;$ for the $J-$method, $c_{s,q}%
=\left(  (1-s)sq^{\prime}\right)  ^{-1/q^{\prime}},$ where $\frac{1}{q}%
+\frac{1}{q^{\prime}}=1.$ In the case of quasi-Banach spaces some adjustments
are necessary (cf. \cite[pag. 35]{JawerthMilman} for more details).}%
\begin{equation}
c_{s,q}=\left\{
\begin{array}
[c]{cc}%
\left(  s(1-s)q\right)  ^{\frac{1}{q}}, & \quad s\in(0,1),\quad q\in\lbrack1,\infty),\\
1, & q=\infty.
\end{array}
\right.  \label{m2}%
\end{equation}
The correct formulae for the normalizations originate from considerations
arising in the extrapolation theory of \cite{JawerthMilman}, where one needs
to work with continuous scales, and must pay close attention to the constants
that appear in the estimates. It turns out that normalized interpolation norms
have certain crucial monotonicity properties that will play a role in what
follows. In particular, it was shown in \cite[(3.2), page 19]{JawerthMilman}
that, if $q\leq r,$ then%
\[
c_{s,r}\left\Vert f\right\Vert _{(X_{0},X_{1})_{s,r}}\leq c_{s,q} \left\Vert
f\right\Vert _{(X_{0},X_{1})_{s,q}}.
\]
Furthermore, the usual reiteration formulae of interpolation theory, as
exemplified by Holmstedt's reiteration formula (cf. \cite[Theorem 2.1, page
307]{BennettSharpley}, \cite[(3.15), page 33]{JawerthMilman}, \cite[Lemmas
2.1--2.3, pages 61--63]{KaradzhovMilman}, \cite{KaradzhovMilmanXiao}, etc.) can be
sharpened, when the norms have been normalized according to \eqref{m2}.

Combining (\ref{m1}) and the fact that%
\begin{equation}
(L^{p}(\mathbb{R}^{n}),\dot{W}_{p}^{1}(\mathbb{R}^{n}))_{s,p}=\dot{W}%
^{s,p}(\mathbb{R}^{n}), \label{m3}%
\end{equation}
with hidden equivalence constants independent of $s,$ we can derive
versions of (\ref{BBM}) and (\ref{MS}). The proof of (\ref{m3}) hinges upon
the known computation of the underlying $K$-functional in terms of a
$p-$modulus of continuity (cf. \cite[Theorem 4.12, page 339]{BennettSharpley}
and the references therein),%

\begin{align*}
K(u,f;L^{p}(\mathbb{R}^{n}),\dot{W}_{p}^{1}(\mathbb{R}^{n}))  &
:=\inf_{f=f_{0}+f_{1}}\{\left\Vert f_{0}\right\Vert _{L^{p}(\mathbb{R}^{n}%
)}+u\left\Vert \nabla f_{1}\right\Vert _{L^{p}(\mathbb{R}^{n})}\}\\
&  \approx\sup_{\left\vert h\right\vert \leq u}\| \Delta_{h}%
^{1}f\|_{L^{p}(\mathbb{R}^{n})},
\end{align*}
where%
\[
\Delta_{h}^{1}f(x)=\Delta_{h}f(x)=f(x+h)-f(x),\qquad x, h\in{\mathbb{R}}^{n}.
\]
The higher order differences $\Delta_{h}^{k}, \, k \in \mathbb{N},$ are defined inductively,
\[
\Delta_{h}^{k+1}=\Delta_{h}\Delta_{h}^{k},
\]
and we can write
\begin{equation}
\Delta_{h}^{k}f(x)=\sum_{j=0}^{k}(-1)^{j}{\binom{k}{j}}f(x+(k-j)h).
\label{ClassicDiff}%
\end{equation}
The corresponding $K$-functionals for higher order Sobolev spaces $\dot{W}^k_p(\R^n)$  are given by (cf. \cite[Theorem
4.12, pag. 339]{BennettSharpley}, \cite{KaradzhovMilmanXiao}, and the
references therein)
\begin{align}
K(u^k,f;L^{p}(\mathbb{R}^{n}),\dot{W}^{k}_{p}(\mathbb{R}^{n}))  &  \approx
\sup_{\left\vert h\right\vert \leq u} \| \Delta_{h}%
^{k}f\|_{L^{p}(\mathbb{R}^{n})},\label{mod1}\\
K(u^k,f;L^{p}(\mathbb{R}^{n}),W^{k}_{p}(\mathbb{R}^{n}))  &  \approx
\min\{1,u^k\}\left\Vert f\right\Vert _{L^{p}(\mathbb{R}^{n})}+\sup_{\left\vert
h\right\vert \leq u} \| \Delta_{h}^{k}f\| _{L^{p}%
(\mathbb{R}^{n})}. \label{mod2}%
\end{align}
Combining (\ref{mod1}) and (\ref{mod2}) with (\ref{m1}), we can prove
Bourgain--Brezis--Mironescu--Maz'ya--Shaposhnikova higher order limit results in
a unified fashion\footnote{The proof of property (\ref{m4}) related to $(L^p(\R^n), \dot{W}^k_p(\R^n))$ can be seen, e.g., in
\cite[Proposition 3, pag 139]{Stein}.} (cf. \cite{KaradzhovMilmanXiao}).

Very recently, Brazke, Schikorra and Yung \cite{Braz}, using heavy Harmonic
Analysis machinery, obtained a new approach to (\ref{BBM}) and suggested a
possible extension to fractional Sobolev spaces, through the use of
Triebel--Lizorkin spaces. The paper \cite{Braz} contains a number of
interesting new inequalities and poses a number of intriguing open problems.
It was their paper that provided the initial impetus for the present work.
However, in order to solve some of the main questions asked in \cite{Braz} we
will rely on interpolation and extrapolation methods. Interestingly, as we
hope to show, interpolation theory will provide us with the crucial spaces,
and the appropriate scalings, to resolve the outstanding issues.

The first problem proposed in \cite[page 5]{Braz} that we shall discuss here
is the extension of (\ref{BBM}) to the fractional case. The interpolation
method provides the strategy, although to implement it requires the
introduction of new spaces as we now explain.

Let $t>0,$ and consider the fractional Sobolev spaces via the fractional Laplacian\footnote{$\dot{H}%
^{t,p}(\mathbb{R}^{n})$ is also known as the space of Riesz potentials, while
$H^{t,p}(\mathbb{R}^{n})$ is the space of Bessel potentials (cf.
\cite{Stein}).}%
\[
\dot{H}^{t,p}(\mathbb{R}^{n})=\{f:\left\Vert f\right\Vert _{\dot{H}%
^{t,p}(\mathbb{R}^{n})}=\| (-\Delta)^{\frac{t}{2}}f\|_{L^{p}%
(\mathbb{R}^{n})}<\infty\},
\]%
\[
H^{t,p}(\mathbb{R}^{n})=\{f:\left\Vert f\right\Vert _{H^{t,p}(\mathbb{R}^{n}%
)}=\left\Vert f\right\Vert _{L^{p}(\mathbb{R}^{n})}+\left\Vert f\right\Vert
_{\dot{H}^{t,p}(\mathbb{R}^{n})}<\infty\},
\]
and the corresponding interpolation pairs $(L^{p}({\mathbb{R}}^{n}%
),H^{t,p}({\mathbb{R}}^{n})),$ $(L^{p}({\mathbb{R}}^{n}),\dot{H}%
^{t,p}({\mathbb{R}}^{n})).$ As a consequence of $L^p$-boundedness of the Riesz transforms,  
\begin{equation}\label{R}
\|(-\Delta)^{\frac{1}{2}} f\|_{L^p(\R^n)} \approx \|\nabla f\|_{L^p(\R^n)}, \qquad p \in (1, \infty),
\end{equation}
and thus $\dot{H}^{1, p} (\R^n) = \dot{W}^1_p(\R^n)$ and $H^{1, p} (\R^n) = W^1_p(\R^n)$.

In view of the previous discussion the program we shall follow should be
clear. Indeed, a direct application of (\ref{m1}) yields the following
extension of the Bourgain--Brezis--Mironescu--Maz'ya--Shaposhnikova limits: Let
$t>0,s\in(0,1),p>1,$ then for $f\in H^{t,p}({\mathbb{R}}^{n})$,
\begin{equation}
\lim_{s\rightarrow1^{-}}(1-s)^{\frac{1}{p}}\Vert f\Vert_{(L^{p}({\mathbb{R}}%
^{n}),\dot{H}^{t,p}({\mathbb{R}}^{n}))_{s,p}}\approx\|(-\Delta
)^{\frac{t}{2}}f\| _{L^{p}({\mathbb{R}}^{n})},\lim_{s\rightarrow0^{+}}%
s^{\frac{1}{p}}\Vert f\Vert_{(L^{p}({\mathbb{R}}^{n}),\dot{H}^{t,p}%
({\mathbb{R}}^{n}))_{s,p}}\approx\Vert f\Vert_{L^{p}({\mathbb{R}}^{n})},
\label{agreg1}%
\end{equation}
with a corresponding result for the inhomogeneous pair $(L^{p}({\mathbb{R}}^{n}%
),H^{t,p}({\mathbb{R}}^{n}))$:%
\begin{equation}
\lim_{s\rightarrow1^{-}}(1-s)^{\frac{1}{p}}\Vert f\Vert_{(L^{p}({\mathbb{R}}%
^{n}),H^{t,p}({\mathbb{R}}^{n}))_{s,p}}\approx\left\Vert f\right\Vert
_{H^{t,p}(\mathbb{R}^{n})}, \lim_{s\rightarrow 0^{+}}s^{\frac{1}{p}}\Vert
f\Vert_{(L^{p}({\mathbb{R}}^{n}),H^{t,p}({\mathbb{R}}^{n}))_{s,p}}\approx\Vert
f\Vert_{L^{p}({\mathbb{R}}^{n})}. \label{agreg2}%
\end{equation}

As we noted before, the case $t=1$ in (\ref{agreg1}) (resp. (\ref{agreg2}))
corresponds to (\ref{BBM}) and (\ref{MS}), while integer values can be seen to
correspond to the higher order limit theorems of \cite{KaradzhovMilmanXiao}.
However, to obtain meaningful statements for arbitrary $t>0,$ we are required
to find explicit descriptions of the indicated interpolation spaces, and
control the corresponding interpolation norms within constants independent of
the parameter $s.$ The difficulty here is due to the fact that the nonlocal operator
$(-\Delta)^{\frac{t}{2}}$ is involved. We will overcome these obstructions using a
tool from Approximation Theory: Fractional differences. Fractional differences
give an appropriate extension of (\ref{ClassicDiff}) and lead to a natural
extension of (\ref{mod1}) and (\ref{mod2}).

We now introduce fractional differences and a class of seminorms that extends
the classical Gagliardo seminorms \eqref{gag1} to the fractional case.

\subsection{Interpolation and fractional differences: The Butzer seminorms}
Butzer and his collaborators (cf. \cite{Butzer, ButzerW} and the references therein)
apparently introduced the idea of modifying (\ref{ClassicDiff}) in order to
define fractional differences of any order $t>0$ so that a related fractional differentiation approach can be developed. For $t>0,$ we let
$\Delta_{h}^{t}$ denote the Butzer operator\footnote{The concept of fractional
differences was already used by Butzer et al. \cite{Butzer} to introduce
fractional order moduli of smoothness, which has recently become a powerful
tool in approximation theory, e.g., in the study of sharp inequalities between
moduli of smoothness (Jackson--Marchaud--Ulyanov inequalities), cf.
\cite{Tikhonov}, \cite{Trebels}, \cite{Kolomoitsev}, \cite{Kolomoitsev21} and
the references therein.} given by%
\begin{equation}
\Delta_{h}^{t}f(x)=\sum_{j=0}^{\infty}(-1)^{j}{\binom{t}{j}}f(x+(t-j)h),
\label{FractDiff}%
\end{equation}
where by definition
\[
{\binom{t}{j}}=\frac{t(t-1)\ldots(t-j+1)}{j!},\qquad {\binom{t}{0}}=1.
\]
Obviously if $t=k\in{{\mathbb{N}}}$, then ${\binom{t}{j}=0}$ for all $j\geq
k+1,$ and (\ref{FractDiff}) coincides with the classical differences
\eqref{ClassicDiff}. Remarkably, the $K$-functionals, we are seeking to
compute in relation with \eqref{agreg1}-\eqref{agreg2}, can be formally obtained\footnote{For the details of the computation
we refer to \cite{Kolomoitsev}, with \cite{Butzer} and \cite{Wilmes} as a
forerunners.} by replacing $\Delta_{h}^{k}$ by $\Delta_{h}^{t}$ on the right
hand side of (\ref{mod1}) and (\ref{mod2}), to obtain
\begin{equation*}
K(u^{t},f;L^{p}(\mathbb{R}^{n}),\dot{H}^{t,p}(\mathbb{R}^{n}))\approx
\sup_{\left\vert h\right\vert \leq u}\left\Vert \Delta_{h}^{t}f\right\Vert
_{L^{p}(\mathbb{R}^{n})}, 
\end{equation*}%
\begin{equation*}
K(u^{t},f;L^{p}(\mathbb{R}^{n}),H^{t,p}(\mathbb{R}^{n}))\approx\min
\{1,u^{t}\}\left\Vert f\right\Vert _{L^{p}(\mathbb{R}^{n})}+\sup_{\left\vert
h\right\vert \leq u}\left\Vert \Delta_{h}^{t}f\right\Vert _{L^{p}%
(\mathbb{R}^{n})}. 
\end{equation*}
Armed with the formulae for the $K$-functionals we can give a complete
description of the interpolation spaces $(L^{p},\dot{H}^{t,p}({\mathbb{R}}%
^{n}))_{s,q}$ (resp. $(L^{p},H^{t,p}({\mathbb{R}}^{n}))_{s,q}$) (cf. Lemma
\ref{LemmaIntFractGS} below) in terms of the following (semi)norms.

\begin{definition}
Let\footnote{In this paper we shall be interested in the case
$1<p<\infty.$} $1\leq p\leq\infty,0<s<t,$ then we define the Butzer seminorms,
as follows\footnote{Note also that \eqref{DefFractGS} enables us to introduce
Sobolev spaces of any order $s>0$ (not necessarily $s<1$ like in
\eqref{Gagliardo}), via higher order differences. For instance, one can
introduce for $s\in(0,2)$,
\[
\Vert f\Vert_{\dot{W}^{s,p}({\mathbb{R}}^{n}),2}=\bigg(\int_{{\mathbb{R}}^{n}%
}\int_{{\mathbb{R}}^{n}}\frac{|f(x+2h)-2f(x+h)+f(x)|^{p}}{|h|^{sp+N}%
}\,dx\,dh\bigg)^{\frac{1}{p}}.
\]
}
\begin{equation}
\Vert f\Vert_{\dot{W}^{s,p}({\mathbb{R}}^{n}),t}:=\left\{
\begin{array}
[c]{cc}%
\big(\int_{{\mathbb{R}}^{n}}\int_{{\mathbb{R}}^{n}}\frac{|\Delta_{h}%
^{t}f(x)|^{p}}{|h|^{sp+n}}\,dx\,dh \big)^{\frac{1}{p}}, &\quad  p<\infty\\
\sup_{(x,h)}  \frac{|\Delta_{h}^{t}f(x)|}{|h|^{s}},  & \quad p=\infty.
\end{array}
\right.   \label{DefFractGS}%
\end{equation}
We let\footnote{The appearence of the prefactor $(s(t-s)p)^{-\frac{1}{p}}$ in front of $\|f\|_{L^p(\R^n)}$ is dictated by standard normalizations in interpolation theory. This will become clear later, cf. Lemma \ref{LemmaIntFractGS}.} 
\begin{equation}\label{DefFractGSIn}%
\|f\|_{W^{s, p}(\R^n), t} := (s(t-s)p)^{-\frac{1}{p}} \|f\|_{L^p(\R^n)} + \|f\|_{\dot{W}^{s, p}(\R^n), t}.
\end{equation}
\end{definition}

Observe that when $t=1$ we recover the classical Gagliardo seminorms (cf. \eqref{gag1})
\begin{equation}
\Vert f\Vert_{\dot{W}^{s,p}({\mathbb{R}}^{n}),1}=\Vert f\Vert_{\dot{W}%
^{s,p}({\mathbb{R}}^{n})},\qquad s\in(0,1). \label{FractModuInt}%
\end{equation}
Furthermore, if $s<t<1$ then $\Vert\cdot\Vert_{\dot{W}^{s,p}({\mathbb{R}}%
^{n}),t}$ defines an equivalent semi-norm on $\dot{W}^{s,p}({\mathbb{R}}%
^{n})$, i.e.,
\begin{equation}
\Vert f\Vert_{\dot{W}^{s,p}({\mathbb{R}}^{n}),t}\approx\Vert f\Vert_{\dot
{W}^{s,p}({\mathbb{R}}^{n})}. \label{EquivalenceGSnorms}%
\end{equation}
Note, however, that the constants of equivalence blow up, as $s$ approaches
$t$ (cf. Lemma \ref{LemmaFractInt} below):
\begin{equation}
\frac{1}{(t-s)^{\frac{1}{\max\{p,2\}}}}\Vert f\Vert_{\dot{W}^{s,p}({\mathbb{R}}^{n}%
)}\lesssim\Vert f\Vert_{\dot{W}^{s,p}({\mathbb{R}}^{n}),t}\lesssim\frac
{1}{(t-s)^{\frac{1}{\min\{p,2\}}}}\Vert f\Vert_{\dot{W}^{s,p}({\mathbb{R}}^{n})}.
\label{later}%
\end{equation}

By direct computation we characterize the interpolation norms (cf. Lemma \ref{LemmaIntFractGS}
below): Suppose that $1<p<\infty,0<s<1$ and $t>0$ then%
\begin{equation*}
\left\Vert f\right\Vert _{(L^{p}({\mathbb{R}}^{n}),\dot{H}^{t,p}({\mathbb{R}%
}^{n}))_{s,p}}\approx\left\Vert f\right\Vert _{\dot{W}^{st,p}({\mathbb{R}}%
^{n}),t}, 
\end{equation*}
with hidden constants of equivalence independent of $s$ (but depending on
$n,p$ and $t$).

In particular, to derive the fractional
Bourgain--Brezis--Mironescu--Maz'ya--Shaposhnikova limit formulae, we simply
replace the interpolation seminorms $\|\cdot\|_{(L^{p},\dot
{H}^{t,p}({\mathbb{R}}^{n}))_{s,p}}$ by $\|\cdot\|_{\dot
{W}^{st,p}({\mathbb{R}}^{n}),t}$ in (\ref{agreg1}) (resp., replace $\| \cdot\|_{(L^{p},H^{t,p}({\mathbb{R}}^{n}))_{s,p}}$ with $\|\cdot\|_{\dot{W}^{st,p}({\mathbb{R}}^{n}),t}+ (s(1-s))^{-\frac{1}{p}} \|\cdot\|
_{L^{p}({\mathbb{R}}^{n})}$ in (\ref{agreg2})$).$ For example, the general
fractional homogeneous version reads now
\begin{equation}
\lim_{s\rightarrow t^{-}}(t-s)^{\frac{1}{p}}\Vert f\Vert_{\dot{W}^{s,p}%
({\mathbb{R}}^{n}),t}\approx \| (-\Delta)^{\frac{t}{2}} f\|_{L^p
(\mathbb{R}^{n})}, \qquad \lim_{s\rightarrow0^{+}}s^{\frac{1}{p}}\Vert f\Vert_{\dot
{W}^{s,p}({\mathbb{R}}^{n}),t}\approx\Vert f\Vert_{L^{p}({\mathbb{R}}^{n})}.
\label{bbmf}%
\end{equation}

This answers completely the question raised in \cite[page 5]{Braz} asking for a
possible fractional extension of (\ref{BBM}) (see also \eqref{R}). Note that
(\ref{bbmf}) and (\ref{later}) show that we cannot use the classical Gagliardo
seminorms for this purpose.

Further evidence of the important r\^{o}le of the Butzer seminorms in our
project is provided by the following result, which answers another question
raised in \cite{Braz}.

\subsection{The fractional Bourgain--Brezis--Mironescu convergence theorem via
Butzer seminorms}

We consider the extension to fractional Sobolev spaces of the
Bourgain--Brezis--Mironescu convergence theorem \cite{BBM},

\begin{theorem}
\label{convergencebbm}Let $p\in(1,\infty)$, assume that $f_{k}\in
\mathcal{S}({\mathbb{R}}^{n})$, $k\in\mathbb{N}$, and%
\[
f_{k}\rightharpoonup f\quad\text{weakly in}\quad L^{p}({\mathbb{R}}^{n}%
)\quad\text{as}\quad k\rightarrow\infty.
\]
Let $\{s_{k}\}_{k\in{{\mathbb{N}}}}\subset(0,1)$ be such that $s_{k}\uparrow1$
and, moreover, assume that
\[
\Lambda:=\sup_{k\in{{\mathbb{N}}}}\Big(\Vert f_{k}\Vert_{L^{p}({\mathbb{R}%
}^{n})}+(1-s_{k})^{\frac{1}{p}}\Vert f_{k}\Vert_{\dot{W}^{s_{k},p}%
({\mathbb{R}}^{n})}\Big)<\infty.
\]
Then $f\in W_{p}^{1}({\mathbb{R}}^{n})$ and there exists $C=C(p,n)>0$ such
that
\[
\Vert f\Vert_{L^{p}({\mathbb{R}}^{n})}+\Vert\nabla f\Vert_{L^{p}({\mathbb{R}%
}^{n})}\leq C\Lambda.
\]
In fact, $f_{k}\rightarrow f$ strongly in $L_{loc}^{p}({\mathbb{R}}^{n}).$
\end{theorem}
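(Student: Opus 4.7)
The plan is to exploit the $s$-independent interpolation characterization $\|f\|_{(L^p,\dot{W}_p^1)_{s,p}}\approx\|f\|_{\dot{W}^{s,p}(\mathbb{R}^n)}$ to convert the hypothesis into a uniform $K$-functional bound for the family $\{f_k\}$, and then to pass to the limit $k\to\infty$ using weak lower semicontinuity of norms along weak convergence in $L^p$.

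Since $f_k\rightharpoonup f$ weakly in $L^p(\mathbb{R}^n)$ and translations commute with weak limits, $\Delta_h f_k\rightharpoonup\Delta_h f$ weakly in $L^p$ for every $h\in\mathbb{R}^n$. Weak lower semicontinuity of the $L^p$-norm together with the $K$-functional identification (\ref{mod1}) then yields $K(u,f;L^p,\dot{W}_p^1)\lesssim\liminf_k K(u,f_k;L^p,\dot{W}_p^1)$ for every $u>0$. On the other hand, the Jawerth--Milman monotonicity $c_{s,\infty}\|\cdot\|_{(X_0,X_1)_{s,\infty}}\leq c_{s,p}\|\cdot\|_{(X_0,X_1)_{s,p}}$ combined with the Butzer--interpolation equivalence gives
\begin{equation*}
\sup_{u>0}u^{-s_k}K(u,f_k;L^p,\dot{W}_p^1)\lesssim (s_k(1-s_k)p)^{1/p}\|f_k\|_{\dot{W}^{s_k,p}(\mathbb{R}^n)}\lesssim\Lambda,
\end{equation*}
uniformly in $k$ large (since $s_k\to 1^-$ forces $s_k$ to be bounded below), i.e.\ $K(u,f_k;L^p,\dot{W}_p^1)\lesssim\Lambda u^{s_k}$ for every $u>0$. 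Sending $k\to\infty$ and using that $u^{s_k}\to u$ for each fixed $u$, one obtains $K(u,f;L^p,\dot{W}_p^1)\lesssim\Lambda u$. By (\ref{m4}) this is exactly $f\in\dot{W}_p^1(\mathbb{R}^n)$ with $\|f\|_{\dot{W}_p^1}\lesssim\Lambda$, and combined with $\|f\|_{L^p}\leq\liminf_k\|f_k\|_{L^p}\leq\Lambda$, the first conclusion $\|f\|_{W_p^1(\mathbb{R}^n)}\leq C\Lambda$ follows.

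For the strong convergence in $L^p_{loc}$, the same bound, together with the trivial estimate $K(u,f_k;L^p,\dot{W}_p^1)\leq\|f_k\|_{L^p}$, gives $K(u,f_k)\lesssim\Lambda\min\{1,u^{s_k}\}$; integrating against the weight $u^{-sp}\,du/u$ with the integral split at $u=1$ shows that $\{f_k\}$ is uniformly bounded in the fixed fractional Sobolev space $W^{s,p}(\mathbb{R}^n)$ for any auxiliary $s\in(0,1)$, as soon as $s_k>(1+s)/2$. The classical Rellich--Kondrachov compactness $W^{s,p}(B_R)\hookrightarrow L^p(B_R)$ on balls, a diagonal extraction, and the uniqueness of the weak $L^p$-limit then upgrade $f_k\rightharpoonup f$ to $f_k\to f$ strongly in $L^p_{loc}(\mathbb{R}^n)$ along the whole sequence. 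The principal obstacle is the careful tracking of $s$-independent constants: the hypothesis carries only the $(1-s_k)^{1/p}$ normalization, so both the Butzer-to-interpolation equivalence and the $c_{s,\infty}$ versus $c_{s,p}$ monotonicity must be invoked in the uniform-in-$s$ forms already supplied by the preceding sections of the paper, in order for the final bound to survive the limit $s_k\to 1^-$ and thereby recover the first-order endpoint.
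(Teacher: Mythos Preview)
Your argument is correct, and it is genuinely different from the proof the paper reports (the one due to Brazke--Schikorra--Yung). The paper's route goes through Triebel--Lizorkin spaces: one applies the sharp inequality of Theorem~\ref{Theorem343} to get $\|f_k\|_{\dot F^{r}_{p,2}}\lesssim\Lambda$ for every $r<1$ close to $1$, passes to the weak limit in each $\dot F^{r}_{p,2}$, and then lets $r\uparrow 1$ using the Littlewood--Paley identification $\dot F^{1}_{p,2}=\dot W^{1}_{p}$; strong $L^p_{loc}$ convergence is then obtained from Rellich--Kondrachov. Your approach bypasses the $\dot F^{s}_{p,2}$ scale entirely: you stay at the level of the $K$-functional for $(L^p,\dot W^1_p)$, use the $s$-uniform equivalence \eqref{IntSob1} together with the Jawerth--Milman monotonicity to obtain $K(u,f_k)\lesssim\Lambda\,u^{s_k}$, pass to the weak limit pointwise in $u$ via lower semicontinuity of the modulus of smoothness (cf.~\eqref{mod1}), and conclude $K(u,f)\lesssim\Lambda\,u$, which is precisely the difference-quotient characterization of $\dot W^1_p$ for $1<p<\infty$. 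Both arguments finish the $L^p_{loc}$ compactness in the same way. Your route is more elementary (no Littlewood--Paley theory, no retraction onto vector-valued $L^p$) and makes transparent that the result is a pure interpolation/extrapolation statement; the Triebel--Lizorkin route, on the other hand, is what naturally generalizes to the fractional endpoint $\dot H^{t,p}$ treated in Theorems~\ref{ThemQuestion1.11}--\ref{Theorem343Fract}, where one really needs to land in $\dot F^{r}_{p,2}$ rather than in a Besov space.
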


In \cite{Braz} the authors propose a plan of attack for a possible fractional
extension of Theorem \ref{convergencebbm}. The idea is based on the use of the 
Triebel-Lizorkin spaces $\dot{F}^s_{p, q}(\R^N)$ (we refer to Section \ref{secfunctionspaces} for the  definitions). Indeed, based on this idea, a new proof of Theorem
\ref{convergencebbm} was presented in \cite{Braz}, using the following sharp Sobolev-type
inequality:

\begin{theorem}
\label{Theorem343} Let $s\in(0,1),p\in(1,\infty)$ and $\Lambda>1$. Assume
$1-s\leq\frac{1}{2\Lambda}$ and let $1-\bar{r}=\Lambda(1-s)$. Then there
exists $C$, which depends only on $n,p$ and $\Lambda$, such that
\[
\Vert f\Vert_{\dot{F}_{p,2}^{r}({\mathbb{R}}^{n})}\leq C\Big(\Vert
f\Vert_{L^{p}({\mathbb{R}}^{n})}+(1-s)^{\frac{1}{p}}\Vert f\Vert_{\dot
{W}^{s,p}({\mathbb{R}}^{n})}\Big),\qquad r\in\lbrack0,\bar{r}].
\]

\end{theorem}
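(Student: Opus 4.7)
The plan is to combine three ingredients: (i) the Littlewood--Paley identification $\dot{F}^r_{p,2}(\mathbb{R}^n) = \dot{H}^{r,p}(\mathbb{R}^n)$, valid for $p\in(1,\infty)$, which reduces the target to estimating $\|f\|_{\dot{H}^{r,p}}$; (ii) the interpolation characterization of the Butzer seminorm from Lemma~\ref{LemmaIntFractGS}, which gives $\|f\|_{(L^p(\mathbb{R}^n),\dot{H}^{1,p}(\mathbb{R}^n))_{s,p}} \approx \|f\|_{\dot{W}^{s,p}(\mathbb{R}^n)}$ with constants independent of $s\in(0,1)$ (so that, for $s$ bounded away from $0$, the right-hand side of the target inequality is comparable to the \emph{normalized} real interpolation norm, with $c_{s,p} = (s(1-s)p)^{1/p}$ from \eqref{m2}); and (iii) Calder\'on's complex interpolation identity $\dot{H}^{r,p} = [L^p(\mathbb{R}^n),\dot{H}^{1,p}(\mathbb{R}^n)]_r$, together with the classical inclusion $(X_0,X_1)_{\theta,1} \hookrightarrow [X_0,X_1]_\theta$ of norm~$1$.

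Combining (i) and (iii), I obtain $\|f\|_{\dot{F}^r_{p,2}} \lesssim \|f\|_{(L^p,\dot{H}^{1,p})_{r,1}}$. Setting $Y_s := (L^p(\mathbb{R}^n),\dot{H}^{1,p}(\mathbb{R}^n))_{s,p}$, I would then invoke Holmstedt's reiteration (in its normalized form from the Jawerth--Milman extrapolation framework \cite{JawerthMilman}) to write
\[
\|f\|_{(L^p,\dot{H}^{1,p})_{r,1}} \approx \|f\|_{(L^p, Y_s)_{r/s,\, 1}},
\]
and estimate the latter by a Young-type inequality. Substituting (ii) to replace $\|f\|_{Y_s}$ by $\|f\|_{\dot{W}^{s,p}}$ and using the gap $s - r \geq (\Lambda-1)(1-s)$ coming from $1 - \bar{r} = \Lambda(1-s)$, one arrives at the additive bound $\|f\|_{L^p} + (1-s)^{1/p}\|f\|_{\dot{W}^{s,p}}$.

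The principal difficulty is the precise tracking of constants at the reiteration step. A naive Young estimate on the product $\|f\|_{L^p}^{1-r/s}\|f\|_{\dot{W}^{s,p}}^{r/s}$ introduces a prefactor of order $((s-r)(r/s))^{-1}$, which, under $s - r \sim (\Lambda-1)(1-s)$, would produce an $(1-s)^{-1/p}$ blowup through the growth of $\|f\|_{\dot{W}^{s,p}}$ as $s \to 1^-$ (cf.~\eqref{BBM}). The sharp compensation comes from working throughout with the normalized norms: the monotonicity
\[
c_{s,r}\|\cdot\|_{(X_0,X_1)_{s,r}} \leq c_{s,q}\|\cdot\|_{(X_0,X_1)_{s,q}}\qquad (q \leq r)
\]
recalled in the excerpt, together with the matching prefactor produced by the normalized Holmstedt reiteration, supplies exactly the factor $(1-s)^{1/p}$ in front of $\|f\|_{\dot{W}^{s,p}}$ on the right-hand side. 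The constraint $1 - r \geq \Lambda(1-s)$ is then precisely what is needed for the remaining combinatorial factors to depend only on $\Lambda$, $n$, and $p$ (and not on $s$).
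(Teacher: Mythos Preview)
Your route through the complex method has a genuine gap at the very first reduction. You bound $\|f\|_{\dot F^{\bar r}_{p,2}}\approx\|f\|_{[L^p,\dot H^{1,p}]_{\bar r}}$ by $\|f\|_{(L^p,\dot H^{1,p})_{\bar r,1}}$ via the norm-one inclusion $(X_0,X_1)_{\theta,1}\hookrightarrow[X_0,X_1]_\theta$, and then try to close using normalized Holmstedt reiteration against $Y_s=(L^p,\dot H^{1,p})_{s,p}$. But this step is too lossy, and the subsequent bookkeeping does not compensate. Concretely, retract to sequences and test with $\xi=(\delta_{j,0})_{j\in\Z}$: one computes
\[
\|\xi\|_{(\ell_2,\ell_2^1)_{\bar r,1}}\approx (1-\bar r)^{-1}\approx (1-s)^{-1},\qquad
\|\xi\|_{\ell_2}=1,\qquad (1-s)^{1/p}\|\xi\|_{(\ell_2,\ell_2^1)_{s,p}}\approx 1.
\]
Thus the inequality $\|f\|_{(L^p,\dot H^{1,p})_{\bar r,1}}\lesssim \|f\|_{L^p}+(1-s)^{1/p}\|f\|_{\dot W^{s,p}}$, which is what your chain would need, is \emph{false} uniformly in $s$. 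Following your outline (right-hand reiteration in Lemma~\ref{LemmaKMX2}(ii) gives a gain $(1-\bar r/s)^{1/p}$, while Lemma~\ref{LemmaInterpolation12} costs $(1-\bar r/s)^{-1}$), the coefficient of $\|f\|_{\dot W^{s,p}}$ comes out as $(1-s)^{1/p-1}$, not $(1-s)^{1/p}$. The monotonicity $c_{\theta,q}\|\cdot\|_{\theta,q}\ge c_{\theta,r}\|\cdot\|_{\theta,r}$ for $q\le r$ that you invoke goes the wrong way here: it would bound $\|\cdot\|_{\bar r,1}$ from \emph{below}, not above.

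The paper avoids the passage through $q=1$ altogether. It retracts $(L^p,\dot H^{1,p})$ to $(L^p(\ell_2),L^p(\ell_2^1))$, uses Lemma~\ref{LemmaInterpolationLpVector} to pull the interpolation inside, and then identifies $\ell_2^{\bar r}$ \emph{exactly} as $(\bar r(1-\bar r))^{1/2}(\ell_2,\ell_2^1)_{\bar r,2}$ via Lemma~\ref{Lemma4}. The target inequality then becomes a comparison between two real interpolation norms,
\[
(s(1-s))^{1/p}(\ell_2,\ell_2^1)_{s,p}\hookrightarrow(\bar r(1-\bar r))^{1/2}(\ell_2,\ell_2^1)_{\bar r,2},
\]
which is handled by Lemma~\ref{LemmaExtrapolSharp}(ii) (splitting $p\le 2$ and $p>2$); the constraint $1-\bar r=\Lambda(1-s)$ is precisely what makes the $p>2$ case close. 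In other words, the correct ``second index'' to hit $\dot F^{\bar r}_{p,2}$ is $q=2$, not $q=1$; your detour through complex interpolation throws away the factor $(1-\bar r)$ that the retraction would otherwise supply. If you insist on your architecture, you would need the \emph{pair-specific} sharpening $\|f\|_{[L^p,\dot H^{1,p}]_{\bar r}}\lesssim \bar r(1-\bar r)\|f\|_{(L^p,\dot H^{1,p})_{\bar r,1}}$ (which is true, again via retraction and Lemmas~\ref{Lemma4} and \ref{LemmaExtrapolSharp}(i)); but proving that is essentially the paper's argument in disguise.
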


With this result at their disposal the proof of Theorem \ref{convergencebbm}
given in \cite{Braz} now proceeds by combining Theorem \ref{Theorem343}
(taking limits as $r\rightarrow1^{-})$, with \eqref{R} and the Littewood--Paley estimate
\begin{equation*}
\Vert f\Vert_{\dot{F}_{p,2}^{t}({\mathbb{R}}^{n})}\approx\Vert(-\Delta
)^{\frac{t}{2}}f\Vert_{L^{p}({\mathbb{R}}^{n})}, \qquad1<p<\infty, 
\end{equation*}
and then applying the Rellich--Kondrachov theorem.

In view of this, the following conjecture was formulated in \cite{Braz}.

\begin{op}
[{\cite[Question 1.11]{Braz}}]\label{Question1.11} Let $p\in(1,\infty)$,
assume that
\[
f_{k}\rightharpoonup f\quad\text{weakly in}\quad L^{p}({\mathbb{R}}^{n}%
)\quad\text{as}\quad k\rightarrow\infty.
\]
Let $t\in(0,1)$ and $(s_{k})_{k\in{{\mathbb{N}}}}\subset(0,t)$ such that
$s_{k}\uparrow t$ and assume that
\[
\Lambda:=\sup_{k\in{{\mathbb{N}}}}\Big(\Vert f_{k}\Vert_{L^{p}({\mathbb{R}%
}^{n})}+(t-s_{k})^{\frac{1}{p}}\Vert f_{k}\Vert_{\dot{W}^{s_{k},p}%
({\mathbb{R}}^{n})}\Big)<\infty.
\]
Then $f\in H^{t,p}({\mathbb{R}}^{n})$ and there exists $C=C(p,n,t)>0$ such
that
\[
\lim_{\bar{t}\uparrow t}\limsup_{k\rightarrow\infty}\Vert(-\Delta)^{\frac
{\bar{t}}{2}}f_{k}\Vert_{L^{p}({\mathbb{R}}^{n})}\leq C\Lambda.
\]
\end{op}

Note that in the case $t=1,$ the conjectured result is exactly Theorem
\ref{convergencebbm}. However, as we have indicated above, the standard Gagliardo
seminorms are \textquotedblleft too far" from the \textquotedblleft
exact\textquotedblright\ interpolation seminorms $\|\cdot\|_{\dot{W}^{s_{k},p}({\mathbb{R}%
}^{n}), t} \,$ (cf. \eqref{DefFractGS}) for $0<t<1$. We use this insight to give a counterexample to Open Problem \ref{Question1.11} (cf.
Section \ref{SectionMR}, Proposition \ref{PropQuestion1.11} for the precise
statement, and Section \ref{SectionProofs} for a proof).

The problem thus remains: What spaces should we use to formulate and prove a
fractional extension of Theorem \ref{convergencebbm}? Once again
interpolation, via the Butzer seminorms, comes to our rescue and we are
essentially able to prove that the conjectured result in Open Problem \ref{Question1.11}  is true if in its
statement we replace  \textquotedblleft Gagliardo seminorms\textquotedblright%
\ by \textquotedblleft Butzer seminorms\textquotedblright\ (cf. Section
\ref{SectionMR}, Theorem \ref{ThemQuestion1.11} for the precise statement, and
Section \ref{SectionProofs} for a complete proof).

\subsection{Sharp Sobolev inequalities}

Another problem proposed in \cite{Braz} deals with Sobolev type inequalities.
Let $0<s<t<1$, an elementary version of Sobolev's inequality asserts that
\begin{equation}
\Vert f\Vert_{\dot{W}^{s,2}({\mathbb{R}}^{n})}\leq\gamma_{s,t}\Big(\Vert
f\Vert_{L^{2}({\mathbb{R}}^{n})}+\Vert f\Vert_{\dot{W}^{t,2}({\mathbb{R}}%
^{n})}\Big). \label{SobolevHilbert}%
\end{equation}
Here $\gamma_{s,t}$ is a positive constant which depends, in particular, on
the smoothness parameters $s$ and $t$. Furthermore, at least formally,
\eqref{SobolevHilbert} is still valid in the limiting values $s=0$ and $t=1$.
According to \eqref{Constancy}, \eqref{BBM} and \eqref{MS}, the validity of
the previous assertion should be reflected in the behavior of the equivalence
constant $\gamma_{s,t}$ that appears in \eqref{SobolevHilbert}, in terms of
the blows up $\min\{t,1-t\}^{\frac{1}{2}}$ and $\min\{s,1-s\}^{\frac{1}{2}}$.
Indeed, the following result was obtained in \cite[Corollary 1.3]{Braz}.

\begin{theorem}
\label{ThmIntro1} Let $0<s<t<1$ and $f\in\mathcal{S}({\mathbb{R}}^{n})$. Then
there exists $C=C(n)>0$, such that
\[
\min\{s,1-s\}^{\frac{1}{2}}\Vert f\Vert_{\dot{W}^{s,2}({\mathbb{R}}^{n})}\leq
C\Big(\Vert f\Vert_{L^{2}({\mathbb{R}}^{n})}+\min\{t,1-t\}^{\frac{1}{2}}\Vert
f\Vert_{\dot{W}^{t,2}({\mathbb{R}}^{n})}\Big).
\]

\end{theorem}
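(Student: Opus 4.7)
The strategy is to reduce the stated Gagliardo-type inequality at $p=2$ to an almost trivial comparison between Bessel-potential seminorms via Plancherel's identity. The key observation is that in the Hilbert setting the Gagliardo seminorm is the Fourier multiplier $\Vert f\Vert_{\dot W^{s,2}}=\sqrt{K_n(s)}\,\Vert f\Vert_{\dot H^{s,2}}$, and one can track explicitly how the scalar $K_n(s)$ blows up near $s=0$ and $s=1$. The $\min\{s,1-s\}^{1/2}$ factor on the left of the target inequality is precisely what is needed to absorb this blow-up; once it is absorbed, the resulting comparison between Bessel seminorms is trivial.

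\emph{Step 1 (quantitative bridge between $\dot W^{s,2}$ and $\dot H^{s,2}$).} For $s\in(0,1)$, I would expand via Plancherel:
\[
\Vert f\Vert_{\dot W^{s,2}(\R^n)}^{2}=K_n(s)\,\Vert f\Vert_{\dot H^{s,2}(\R^n)}^{2},\qquad K_n(s):=\int_{\R^n}\frac{|\ue^{\ui h\cdot e_1}-1|^{2}}{|h|^{n+2s}}\,dh,
\]
the independence of the direction $e_1$ coming from the substitution $h\mapsto h/|\xi|$. Since the integrand is $\asymp |h|^{2-n-2s}$ near the origin and $\asymp |h|^{-n-2s}$ at infinity, the two singular regimes contribute $\asymp (1-s)^{-1}$ and $\asymp s^{-1}$ respectively, giving $K_n(s)\asymp 1/(s(1-s))$ with constants depending only on $n$. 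Equivalently,
\[
\sqrt{s(1-s)}\,\Vert f\Vert_{\dot W^{s,2}(\R^n)}\asymp_{n}\Vert f\Vert_{\dot H^{s,2}(\R^n)},
\]
and the same holds at $t$. Alternatively, this step can be read off from Lemma \ref{LemmaIntFractGS} at $p=q=2$, $t=1$, since on a Hilbert pair the real interpolation norm equals $\Vert f\Vert_{\dot H^{s,2}}$ up to the universal normalization factor \eqref{m2}.

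\emph{Step 2 (trivial Bessel-space comparison) and Step 3 (conclusion).} For $0<s<t$ the pointwise bound $|\xi|^{2s}\leq 1+|\xi|^{2t}$ on $\R^n$ gives, after integrating against $|\wh f(\xi)|^{2}$,
\[
\Vert f\Vert_{\dot H^{s,2}(\R^n)}\leq\Vert f\Vert_{L^{2}(\R^n)}+\Vert f\Vert_{\dot H^{t,2}(\R^n)}.
\]
Chaining this with the two instances of Step 1 and using $s(1-s)\asymp\min\{s,1-s\}$ (and similarly at $t$) on $(0,1)$ produces
\[
\min\{s,1-s\}^{1/2}\Vert f\Vert_{\dot W^{s,2}(\R^n)}\lesssim\Vert f\Vert_{L^{2}(\R^n)}+\min\{t,1-t\}^{1/2}\Vert f\Vert_{\dot W^{t,2}(\R^n)},
\]
which is exactly the statement.

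\emph{Main obstacle.} The only substantive point is Step 1, i.e., pinning down the precise blow-up $K_n(s)\asymp 1/(s(1-s))$ at \emph{both} endpoints simultaneously. The computation is classical, but one must carefully split the $h$-integral at $|h|\sim 1$ and match the two singular regimes so that the constants depend only on $n$. Everything else is either Plancherel or the elementary inequality $a\leq 1+a^{t/s}$ for $a\geq 0$. In fact, the interpolation-theoretic viewpoint of the paper short-circuits even this step: Lemma \ref{LemmaIntFractGS} together with the normalization \eqref{m2} encodes precisely the $\sqrt{s(1-s)}$ prefactor, so the Fourier computation above is only a concrete realization of the abstract mechanism that drives the whole paper.
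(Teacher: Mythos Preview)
Your proof is correct. Note, however, that the paper does not give a direct proof of Theorem~\ref{ThmIntro1}: it quotes the result from \cite[Corollary~1.3]{Braz} and then proves the far more general Theorem~\ref{ConjectureBSY}, whose special case $p=2$, $\alpha=1$ recovers Theorem~\ref{ThmIntro1}. That proof runs through abstract interpolation/extrapolation: one uses the monotonicity of normalized real-interpolation norms for the ordered pair $(L^{p},H^{\alpha,p})$ (Lemma~\ref{LemmaExtrapolSharp}(ii)) and then identifies those norms with the Butzer seminorms via Lemma~\ref{LemmaIntFractGS}.

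Your argument is a genuinely different, elementary Fourier-analytic route available only at $p=2$: Plancherel turns the Gagliardo seminorm into $\sqrt{K_n(s)}\,\Vert f\Vert_{\dot H^{s,2}}$ with an explicit multiplier constant $K_n(s)\asymp_n (s(1-s))^{-1}$, and then the trivial pointwise bound $|\xi|^{2s}\leq 1+|\xi|^{2t}$ does the rest. What you gain is simplicity and an explicit constant depending only on $n$; what you lose is generality, since neither the Plancherel identity nor the pointwise Fourier-side comparison survives for $p\neq 2$. The paper's interpolation machinery, by contrast, handles all $p\in(1,\infty)$ and all $\alpha>0$ in one stroke, at the cost of invoking the sharp extrapolation Lemma~\ref{LemmaExtrapolSharp} and the $K$-functional characterization behind Lemma~\ref{LemmaIntFractGS}. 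Your closing remark that Lemma~\ref{LemmaIntFractGS} together with the normalization~\eqref{m2} encodes the $\sqrt{s(1-s)}$ prefactor is exactly right and is, in essence, how the paper's general argument specializes to $p=2$.
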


It is well known that the $L^{p}$-counterpart of \eqref{SobolevHilbert} is
also true, i.e.,
\begin{equation*}
\Vert f\Vert_{\dot{W}^{s,p}({\mathbb{R}}^{n})}\leq C_{s,t}\Big(\Vert
f\Vert_{L^{p}({\mathbb{R}}^{n})}+\Vert f\Vert_{\dot{W}^{t,p}({\mathbb{R}}%
^{n})}\Big)
\end{equation*}
provided that $0<s<t<1$ and $1<p<\infty$. Accordingly, a similar result, in
the spirit of Theorem \ref{ThmIntro1}, was expected. However, the set of
harmonic analysis techniques developed in \cite{Braz} do not work outside the
Hilbert setting given by $p=2$. This leads to the following question, explicitly raised in
\cite{Braz}:

\begin{op}
\label{OpenProblemIntro1} Let $p\in(1,\infty),0<\theta<s<t<1$ and
$f\in\mathcal{S}({\mathbb{R}}^{n})$. Does there exist $C=C(n,p,\theta)>0$,
such that%
\begin{equation}
\min\{s,1-s\}^{\frac{1}{p}}\Vert f\Vert_{\dot{W}^{s,p}({\mathbb{R}}^{n})}\leq
C\Big(\Vert f\Vert_{L^{p}({\mathbb{R}}^{n})}+\min\{t,1-t\}^{\frac{1}{p}}\Vert
f\Vert_{\dot{W}^{t,p}({\mathbb{R}}^{n})}\Big)? \label{OpenProblemIntro1Eq}%
\end{equation}

\end{op}

We provide a positive answer to this problem. Indeed, in Section
\ref{SectionMR}, Theorem \ref{ConjectureBSY}, we state a somewhat stronger
result which is then proved in Section \ref{SectionProofs}.

We now turn to explain the organization of the paper. The brief Section
\ref{secfunctionspaces} contains the basic background and notation concerning
the function spaces we shall consider; Section \ref{SectionMR} contains the
statements of the main results, which we then prove in Section
\ref{SectionProofs}. The interpolating Section \ref{SectionMethod} contains
detailed information with complete proofs concerning the interpolation and
extrapolation tools that constitute the basic methods we use in this paper.
Finally, in the Appendix, we collect further results and applications. In particular, we prove an extension of the classical fractional Sobolev inequalities of Bourgain--Brezis--Mironescu \cite{BBM1} and Maz'ya--Shaposhnikova \cite{Mazya}, that were one of the original motivations for the limiting theorems \eqref{BBM} and \eqref{MS}.

\vspace{4mm}
\textbf{Acknowledgments.} We are grateful to Petru Mironescu for his constant interest and encouragement during the preparation of this paper and to Sergey Tikhonov for precious information concerning fractional differences and moduli of smoothness.

 The first named author has been partially supported by the LABEX MILYON (ANR-10-LABX-0070) of Universit\'e de Lyon, within the program ``Investissement d'Avenir" (ANR-11-IDEX-0007) operated by the
French National Research Agency (ANR), and by MTM2017-84058-P (AEI/FEDER, UE) .

\section{Background on function spaces\label{secfunctionspaces}}

Let $s\in(0,1)$ and $p\in(1,\infty)$, the \emph{(fractional) Sobolev space} $\dot
{W}^{s,p}({\mathbb{R}}^{n})$ is equipped with the \emph{Gagliardo seminorm}
\begin{equation}
\Vert f\Vert_{\dot{W}^{s,p}({\mathbb{R}}^{n})}:=\bigg(\int_{{\mathbb{R}}^{n}%
}\int_{{\mathbb{R}}^{n}}\frac{|f(x)-f(y)|^{p}}{|x-y|^{sp+n}}%
\,dx\,dy\bigg)^{1/p}. \label{Gagliardo}%
\end{equation}

The \emph{Riesz potential space} $\dot{H}^{s,p}({\mathbb{R}}^{n})$ is endowed with
\[
\Vert f\Vert_{\dot{H}^{s,p}({\mathbb{R}}^{n})}:=\Vert(-\Delta)^{\frac{s}{2}%
}f\Vert_{L^{p}({\mathbb{R}}^{n})}.
\]
The spaces $\dot{H}^{s,p}({\mathbb{R}}^{n})$ make sense for any $s\in
{\mathbb{R}}$. In particular, $\dot{H}^{k,p}({\mathbb{R}}^{n}),\,k\in
{{\mathbb{N}}},$ coincides with the \emph{classical Sobolev space} $\dot{W}_{p}%
^{k}({\mathbb{R}}^{n})$ and
\begin{equation}
\Vert f\Vert_{\dot{H}^{k,p}({\mathbb{R}}^{n})}\approx\Vert f\Vert_{\dot{W}%
_{p}^{k}({\mathbb{R}}^{n})}:=\|\nabla^{k}f\|_{L^{p}({\mathbb{R}}^{n}%
)},\qquad p\in(1,\infty). \label{SobRiesz}%
\end{equation}
We let $H^{s,p}({\mathbb{R}}^{n})=L^{p}({\mathbb{R}}^{n})\cap\dot{H}%
^{s,p}({\mathbb{R}}^{n})$ (the \emph{space of Bessel potentials}) and $\|f\|_{H^{s,p}({\mathbb{R}}^{n})} = \|f\|_{L^p(\R^n)} + \Vert(-\Delta)^{\frac{s}{2}%
}f\Vert_{L^{p}({\mathbb{R}}^{n})}$.

The \emph{(homogeneous) Triebel--Lizorkin space} $\dot{F}_{p,q}^{s}({\mathbb{R}}%
^{n}),\,s\in{\mathbb{R}},\,p\in(0,\infty),\,q\in(0,\infty]$, is formed by all
$f\in\dot{\mathcal{S}}^{\prime}({\mathbb{R}}^{n})$ such that
\[
\Vert f\Vert_{\dot{F}_{p,q}^{s}({\mathbb{R}}^{n})}:=\bigg\|\bigg(\sum
_{j=-\infty}^{\infty}2^{jsq}|\Delta_{j}f|^{q}\bigg)^{1/q}\bigg\|_{L^{p}%
({\mathbb{R}}^{n})}<\infty,
\]
with the obvious modification if $q=\infty$. Here, $\{\Delta_{j}%
f:j\in{{\mathbb{Z}}}\}$ is the standard dyadic Littlewood--Paley decomposition
of $f$. The inhomogeneous counterparts, $F_{p,q}^{s}({\mathbb{R}}^{n}),$ can
be introduced similarly.

Interchanging the roles of $L^{p}({\mathbb{R}}^{n})$ and $\ell_{q}%
({{\mathbb{Z}}})$ in the definition of $\dot{F}_{p,q}^{s}({\mathbb{R}}^{n})$
we arrive at the Besov spaces. Specifically, the \emph{(homogeneous) Besov space}
$\dot{B}_{p,q}^{s}({\mathbb{R}}^{n}),\,s\in{\mathbb{R}},p,q\in(0,\infty]$, is
formed by all $f\in\dot{\mathcal{S}}^{\prime}({\mathbb{R}}^{n})$ such that
\[
\Vert f\Vert_{\dot{B}_{p,q}^{s}({\mathbb{R}}^{n})}:=\bigg(\sum_{j=-\infty
}^{\infty}2^{jsq}\Vert\Delta_{j}f\Vert_{L^{p}({\mathbb{R}}^{n})}%
^{q}\bigg)^{1/q}<\infty,
\]
with the obvious modifications if $q=\infty$.

We shall use the following standard notations. Given a quasi-Banach space $X$
and $\lambda>0$, we will denote by $\lambda X$ the space endowed with
\[
\Vert f\Vert_{\lambda X}=\lambda\Vert f\Vert_{X}, \qquad f \in X.
\]
Let $Y$ be a quasi-Banach space and let $\beta>0$. We will use the notation
$\lambda X\hookrightarrow\beta Y$ to indicate that there exists a positive
constant $C$, independent of $\lambda$ and $\beta$, such that
\[
\beta\Vert f\Vert_{Y}\leq C\lambda\Vert f\Vert_{X},\qquad f\in X.
\]
In particular, if $\lambda=\beta=1$ then we simply write $X\hookrightarrow Y$.
By $X=Y$ we mean that $X\hookrightarrow Y$ and $Y\hookrightarrow X$.

We collect some well-known relations between the function spaces described
above (cf. \cite{BerghLofstrom}, \cite{Stein} and \cite{Triebel83}).

\begin{lemma}
\label{TableCoincidences}

\begin{enumerate}
[\upshape(i)]

\item Let $s\in{\mathbb{R}},\,p\in(0,\infty)$ and $q\in(0,\infty]$. Then
\[
\dot{B}_{p,\min\{p,q\}}^{s}({\mathbb{R}}^{n})\hookrightarrow\dot{F}_{p,q}%
^{s}({\mathbb{R}}^{n})\hookrightarrow\dot{B}_{p,\max\{p,q\}}^{s}({\mathbb{R}%
}^{n}).
\]
In particular,
\[
\dot{F}_{p,p}^{s}({\mathbb{R}}^{n})=\dot{B}_{p,p}^{s}({\mathbb{R}}^{n}).
\]

\item Let $s\in(0,1)$ and $p\in(1,\infty)$. Then
\[
\dot{W}^{s,p}({\mathbb{R}}^{n})=\dot{B}_{p,p}^{s}({\mathbb{R}}^{n}).
\]

\item Let $s\in{\mathbb{R}}$ and $p\in(1,\infty)$. Then
\[
\dot{F}_{p,2}^{s}({\mathbb{R}}^{n})=\dot{H}^{s,p}({\mathbb{R}}^{n}).
\]

\end{enumerate}
\end{lemma}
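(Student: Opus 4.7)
The three parts of Lemma~\ref{TableCoincidences} are all classical, and my plan is to sketch the standard argument for each, since the main technical input differs from one to the next.

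For part (i), the plan is to invoke Minkowski's integral inequality for mixed norms: for $1 \leq r \leq s \leq \infty$, the mixed norm of a non-negative measurable function on a product of two measure spaces is largest when the norm with the larger exponent is taken outermost. Applied on $\R^n \times \Z$ (with counting measure on $\Z$) to the sequence $(2^{js}\,\Delta_j f(x))_{j \in \Z}$, this inequality yields $\dot{F}_{p,q}^s \hookrightarrow \dot{B}_{p,q}^s$ when $p \leq q$ and $\dot{B}_{p,q}^s \hookrightarrow \dot{F}_{p,q}^s$ when $q \leq p$, i.e.\ the outer embedding in each respective case. The degenerate case $p = q$ collapses to the identity $\dot{F}_{p,p}^s = \dot{B}_{p,p}^s$, and the two remaining embeddings then follow by combining this identity with the pointwise monotonicity $\ell^{q_1} \hookrightarrow \ell^{q_2}$ for $q_1 \leq q_2$, which, after integration in $x$, gives monotonicity $\dot{F}_{p,q_1}^s \hookrightarrow \dot{F}_{p,q_2}^s$.

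For part (ii), the plan is to invoke the classical difference characterization of Besov spaces (cf.\ \cite{Triebel83}): for $0 < s < 1$ and $1 \leq p < \infty$,
\[
\|f\|_{\dot{B}_{p,p}^s(\R^n)}^p \approx \int_{\R^n} \frac{\|\Delta_h f\|_{L^p(\R^n)}^p}{|h|^{sp + n}}\, dh.
\]
Unfolding the inner $L^p$ norm and applying Fubini recovers exactly the Gagliardo double integral \eqref{Gagliardo} up to multiplicative constants depending only on $n, s, p$.

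For part (iii), the plan combines two ingredients. First, the Littlewood--Paley square function theorem, which for $p \in (1,\infty)$ gives
\[
\|g\|_{L^p(\R^n)} \approx \Big\|\Big(\sum_{j \in \Z} |\Delta_j g|^2\Big)^{1/2}\Big\|_{L^p(\R^n)},
\]
applied to $g = (-\Delta)^{s/2}f$ so that the left-hand side becomes $\|f\|_{\dot{H}^{s,p}(\R^n)}$. Second, a Fourier multiplier argument is needed to replace $\Delta_j (-\Delta)^{s/2} f$ by $2^{js}\,\Delta_j f$ inside the square function; this is handled by a vector-valued Mikhlin--H\"ormander theorem applied to the frequency-localized symbols of $(-\Delta)^{s/2}$, whose derivatives, after dividing out the scaling factor $2^{js}$, satisfy uniform $j$-independent Mikhlin bounds. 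The main subtlety of the whole lemma lies precisely in this multiplier step, which genuinely relies on Calder\'on--Zygmund theory of vector-valued singular integrals and is the reason for restricting to $p \in (1, \infty)$; parts (i) and (ii) are essentially bookkeeping by comparison.
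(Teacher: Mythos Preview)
Your sketch is correct and follows the standard arguments, but note that the paper does not actually prove Lemma~\ref{TableCoincidences}: it is stated as a collection of well-known facts with references to \cite{BerghLofstrom}, \cite{Stein}, and \cite{Triebel83}, and no proof is given. Your outline is precisely the content one finds in those references.

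Two small points on part~(i). First, your verbal formulation of Minkowski's inequality is inverted: the mixed norm is \emph{smallest} (not largest) when the larger exponent is outermost; nonetheless, the embeddings you then write down are the correct ones, so this is a slip of exposition rather than of logic. Second, you restrict Minkowski to $1 \le r \le s$, whereas the lemma is stated for $p,q \in (0,\infty]$; the quasi-Banach range is handled by the same inequality, which extends to $0 < r \le s \le \infty$ by raising to the $r$-th power and applying the classical version with exponent $s/r \ge 1$.
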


\section{Main results}

\label{SectionMR}

In this section we provide a list of statements of the main results we have
obtained. The proofs will be given in Section \ref{SectionProofs}.

We start outlining some details of a counterexample that provides a negative
answer to Open Problem \ref{Question1.11}. To avoid technicalities, we switch
temporarily from ${\mathbb{R}}^{n}$ to the unit circle ${\mathbb{T}}$\footnote{Spaces of periodic functions are defined similarly as their analogs on $\R^n$, simply replacing $L^p(\R^n)$ by $L^p(\T)$.}.
However, our construction can be easily modified to deal with ${\mathbb{R}%
}^{n}$ and $p>\frac{2n}{n+1}$; in this connection see Remark
\ref{RemarkCounterexample} below.

\begin{proposition}
\label{PropQuestion1.11} Let $p\in(1,\infty)$ and $t\in(0,1)$. Let $f$ be
formally associated to a Fourier series as follows,
\begin{equation}
f(x)\sim\sum_{\nu=1}^{\infty}\nu^{-t-1+\frac{1}{p}}\cos(\nu x),\qquad
x\in{\mathbb{T}}.\label{FourierSeries}%
\end{equation}
Then, $f\in L^{p}({\mathbb{T}})$ but $f\not \in H^{t,p}({\mathbb{T}})$.
Furthermore, given any sequence $(s_{k})_{k\in{{\mathbb{N}}}}\subset(0,t)$
with $s_{k}\uparrow t$,
\[
\sup_{k\in{{\mathbb{N}}}}\,(t-s_{k})^{\frac{1}{p}}\Vert f\Vert_{\dot{W}%
^{s_{k},p}({\mathbb{T}})}<\infty.
\]

\end{proposition}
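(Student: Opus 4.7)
The Fourier coefficients $a_{\nu} := \nu^{-(t+1-1/p)}$, $\nu \geq 1$, are strictly positive and strictly decreasing, which puts every assertion of the proposition within the scope of classical theorems on cosine series with monotone coefficients (cf.\ Zygmund, \emph{Trigonometric Series}, Ch.~XII). My plan is to reduce each of the three statements to the convergence of an explicit power sum in $\nu$.

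For $f \in L^{p}({\mathbb{T}})$, I would invoke the Hardy--Littlewood--Paley--Zygmund characterization
\begin{equation*}
\|f\|_{L^{p}({\mathbb{T}})}^{p} \approx \sum_{\nu \geq 1} \nu^{p-2} a_{\nu}^{p} = \sum_{\nu \geq 1} \nu^{-pt-1},
\end{equation*}
which converges since $pt > 0$. For $f \notin H^{t,p}({\mathbb{T}})$, note that $(-\Delta)^{t/2}$ acts on the Fourier side by multiplication by $\nu^{t}$, so that $(-\Delta)^{t/2} f$ has monotone positive Fourier coefficients $\nu^{-1+1/p}$; the same theorem identifies its $L^{p}$-norm with $\sum_{\nu \geq 1} \nu^{p-2} \nu^{p(-1+1/p)} = \sum_{\nu \geq 1} \nu^{-1}$, a divergent series. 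Hence $f \notin \dot{H}^{t,p}({\mathbb{T}})$, and \emph{a fortiori} $f \notin H^{t,p}({\mathbb{T}})$.

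For the Gagliardo bound, I would combine the identification $\dot{W}^{s,p}({\mathbb{T}}) = \dot{B}^{s}_{p,p}({\mathbb{T}})$ valid for $0 < s < 1$, $1 < p < \infty$ (the periodic analog of Lemma \ref{TableCoincidences}(ii)), with the Littlewood--Paley characterization
\begin{equation*}
\|f\|_{\dot{B}^{s}_{p,p}({\mathbb{T}})}^{p} \approx \sum_{j \geq 0} 2^{jsp}\,\|\Delta_{j} f\|_{L^{p}({\mathbb{T}})}^{p}.
\end{equation*}
Since $a_{\nu}$ is essentially of constant size $2^{-j(t+1-1/p)}$ on each dyadic shell $2^{j} \leq \nu < 2^{j+1}$, standard Dirichlet-kernel estimates for cosine polynomials with positive monotone coefficients yield $\|\Delta_{j} f\|_{L^{p}({\mathbb{T}})} \approx 2^{-j(t+1-1/p)} \cdot 2^{j(1-1/p)} = 2^{-jt}$. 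Summing the resulting geometric series,
\begin{equation*}
\|f\|_{\dot{W}^{s,p}({\mathbb{T}})}^{p} \approx \sum_{j \geq 0} 2^{jp(s-t)} \approx \frac{1}{p(t-s)\ln 2},
\end{equation*}
so that $(t-s)^{1/p}\,\|f\|_{\dot{W}^{s,p}({\mathbb{T}})} \approx 1$ uniformly in $s$ bounded away from $0$, from which the finiteness of the supremum along any sequence $s_{k} \uparrow t$ is immediate.

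The main technical point will be ensuring that the hidden constants in the Littlewood--Paley/Dirichlet-kernel estimates are independent of $s$, so that the precise blow up rate $(t-s)^{-1}$ of $\|f\|_{\dot{W}^{s,p}({\mathbb{T}})}^{p}$ is captured exactly by the geometric sum. This is automatic on the Littlewood--Paley side because the dyadic block estimate $\|\Delta_{j} f\|_{L^{p}({\mathbb{T}})} \approx 2^{-jt}$ carries no $s$-dependence; the sharp comparison $\sum_{j \geq 0} r^{j} \approx (1-r)^{-1} \approx (p(t-s)\ln 2)^{-1}$ for $r = 2^{p(s-t)}$ close to $1$ then matches the blow up of the Gagliardo seminorm against the prefactor $(t-s)^{1/p}$ and closes the argument.
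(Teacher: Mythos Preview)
Your proposal is correct. The treatment of $f\in L^{p}({\mathbb{T}})$ and $f\notin H^{t,p}({\mathbb{T}})$ is essentially identical to the paper's: both reduce to the Hardy--Littlewood characterization of $L^{p}$ norms of cosine series with monotone coefficients (the paper cites a theorem from \cite{DominguezTikhonov} for the $H^{t,p}$ part, but the content is exactly your computation with the multiplier $\nu^{t}$).

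The genuine difference is in the Gagliardo estimate. You go through the Littlewood--Paley description $\dot{W}^{s,p}({\mathbb{T}})=\dot{B}^{s}_{p,p}({\mathbb{T}})$, estimate each block $\|\Delta_{j}f\|_{L^{p}({\mathbb{T}})}\approx 2^{-jt}$ via the Dirichlet-kernel size of a lacunary block with nearly constant monotone coefficients, and sum the resulting geometric series. The paper instead uses the interpolation identity $\|f\|_{\dot{W}^{s,p}({\mathbb{T}})}\approx\|f\|_{(L^{p}({\mathbb{T}}),\dot{W}^{1}_{p}({\mathbb{T}}))_{s,p}}$ together with the modulus-of-smoothness formula for monotone Fourier series from \cite{Tikhonov, GorbachevTikhonov}, obtaining $K(l^{-1},f)\approx l^{-t}$ and then the same geometric sum. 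Your route is more self-contained and avoids the $K$-functional machinery; the price is that you must justify that the equivalence constants in $\dot{W}^{s,p}=\dot{B}^{s}_{p,p}$ stay bounded for $s$ in a compact subinterval of $(0,1)$, which you correctly note is automatic here since $s_{k}\uparrow t\in(0,1)$. The paper's route, by contrast, builds in the $s$-uniformity from the outset via \eqref{IntSob1}, which is consistent with the interpolation/extrapolation philosophy the paper is promoting. Both arguments land on the same computation $\sum_{l}l^{-(t-s_{k})p-1}\approx (t-s_{k})^{-1}$.
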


In fact, we are able to go far beyond, and we show that a correct
formulation of Open Problem \ref{Question1.11} is obtained by means of replacing
the classical seminorms $\|\cdot\|_{\dot{W}^{s,p}({\mathbb{R}}^{n})}$
by the Butzer seminorms $\|\cdot\|_{\dot{W}^{s,p}({\mathbb{R}}^{n}%
),t}$ (cf. \eqref{DefFractGS} and \eqref{EquivalenceGSnorms}).

\begin{theorem}
\label{ThemQuestion1.11} Let $p\in(1,\infty)$, assume that
\[
f_{k}\rightharpoonup f\quad\text{weakly in}\quad L^{p}({\mathbb{R}}^{n}%
)\quad\text{as}\quad k\rightarrow\infty.
\]
Let $t\in(0,1)$ and $(s_{k})_{k\in{{\mathbb{N}}}}\subset(0,t)$ be such that
$s_{k}\uparrow t.$ Assume that
\[
\Lambda:=\sup_{k\in{{\mathbb{N}}}}\Big(\Vert f_{k}\Vert_{L^{p}({\mathbb{R}%
}^{n})}+(t-s_{k})^{\frac{1}{p}}\Vert f_{k}\Vert_{\dot{W}^{s_{k},p}%
({\mathbb{R}}^{n}),t}\Big)<\infty.
\]
Then $f\in H^{t,p}({\mathbb{R}}^{n})$ and there exists $C=C(n, p, t)>0$ such
that
\[
\lim_{\bar{t}\uparrow t}\limsup_{k\rightarrow\infty}\Vert(-\Delta)^{\frac
{\bar{t}}{2}}f_{k}\Vert_{L^{p}({\mathbb{R}}^{n})}\leq C\Lambda.
\]

\end{theorem}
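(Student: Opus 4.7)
The plan is to imitate the proof of Theorem \ref{convergencebbm} given in \cite{Braz}, but with the Brazke--Schikorra--Yung inequality (Theorem \ref{Theorem343}) replaced by its Butzer analogue. The core step is to establish the following fractional sharp embedding: there exists $\mu_{0} > 1$ such that, for every $\mu > \mu_{0}$, there is $C = C(n, p, t, \mu)$ with the property that, whenever $s \in (0, t)$ and $\bar{t} \in (0, t - \mu (t-s)]$,
\begin{equation*}
\| f \|_{\dot{H}^{\bar{t}, p}(\mathbb{R}^{n})} \leq C \Bigl( \| f \|_{L^{p}(\mathbb{R}^{n})} + (t-s)^{1/p} \| f \|_{\dot{W}^{s, p}(\mathbb{R}^{n}), t} \Bigr).
\end{equation*}
The separation condition $t - \bar{t} \geq \mu (t-s)$ ensures that the interpolation parameters $s/t$ and $\bar{t}/t$ are kept apart---a feature which, as in \cite{Braz}, is essential for the constant $C$ to remain independent of $s$ and $\bar{t}$.

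To prove this key estimate, I would invoke the interpolation identification $\| f \|_{\dot{W}^{s, p}, t} \approx \| f \|_{(L^{p}, \dot{H}^{t, p})_{s/t, p}}$ from Lemma \ref{LemmaIntFractGS} (with constants independent of $s$), so that the prefactor $(t - s)^{1/p}$ plays the role of the normalization factor $c_{s/t, p}$ from \eqref{m2}. Writing $\dot{H}^{\bar{t}, p}(\mathbb{R}^{n}) = \dot{F}^{\bar{t}}_{p, 2}(\mathbb{R}^{n})$ and using the Besov--Triebel--Lizorkin embedding $\dot{B}^{\bar{t}}_{p, \min(p, 2)}(\mathbb{R}^{n}) \hookrightarrow \dot{F}^{\bar{t}}_{p, 2}(\mathbb{R}^{n})$ of Lemma \ref{TableCoincidences}(i), the problem reduces to estimating the real interpolation norm $(L^{p}, \dot{H}^{t, p})_{\bar{t}/t, \min(p, 2)}$. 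I would then apply the sharp Holmstedt-type reiteration of \cite{JawerthMilman} via the intermediate space $(L^{p}, \dot{H}^{t, p})_{s/t, p}$, followed by the interpolation inequality $c_{\eta, q} \| f \|_{(A_{0}, A_{1})_{\eta, q}} \lesssim \| f \|_{A_{0}}^{1 - \eta} \| f \|_{A_{1}}^{\eta}$ with $\eta = \bar{t}/s$. The separation $\mu > 1$ bounds $\eta$ away from $1$, which is precisely what allows the $(t - s)^{-1/p}$ blow-up of the un-normalized inner norm $\| f \|_{(L^{p}, \dot{H}^{t, p})_{s/t, p}}$ to be absorbed by the normalization constants in a uniform way.

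With the key estimate in hand, the conclusion of Theorem \ref{ThemQuestion1.11} follows readily. Fix any $\mu > \mu_{0}$. For each $\bar{t} \in (0, t)$ sufficiently close to $t$, the condition $t - s_{k} \leq (t - \bar{t})/\mu$ holds for all $k$ large enough; applying the key estimate with $s = s_{k}$ yields
\begin{equation*}
\| (-\Delta)^{\bar{t}/2} f_{k} \|_{L^{p}(\mathbb{R}^{n})} = \| f_{k} \|_{\dot{H}^{\bar{t}, p}(\mathbb{R}^{n})} \leq C \Lambda
\end{equation*}
for all such $k$. Passing to $\limsup_{k \to \infty}$ and then to $\lim_{\bar{t} \uparrow t}$ preserves the bound, with $C = C(n, p, t, \mu)$ independent of $\bar{t}$. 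To deduce $f \in H^{t, p}(\mathbb{R}^{n})$, I would use that $f_{k} \rightharpoonup f$ in $L^{p}$ implies $(-\Delta)^{\bar{t}/2} f_{k} \to (-\Delta)^{\bar{t}/2} f$ in $\mathcal{S}'({\mathbb{R}}^{n})$; together with the uniform $L^{p}$-bound on $\{(-\Delta)^{\bar{t}/2} f_{k}\}_{k}$ and weak compactness in $L^{p}$, this yields $\| (-\Delta)^{\bar{t}/2} f \|_{L^{p}} \leq C \Lambda$ for each $\bar{t}$ close to $t$. A second weak compactness argument as $\bar{t} \uparrow t$, using that $(-\Delta)^{\bar{t}/2} f \to (-\Delta)^{t/2} f$ in $\mathcal{S}'$, finally gives $(-\Delta)^{t/2} f \in L^{p}$, and combined with $f \in L^{p}$ this produces $f \in H^{t, p}(\mathbb{R}^{n})$.

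The hardest part is the proof of the key estimate with a constant $C$ that is uniform in $s$ and $\bar{t}$: naive applications of real interpolation produce bounds for $\| f \|_{\dot{H}^{\bar{t}, p}}$ that blow up like $(t-s)^{-\bar{t}/(ps)}$ or $(t-\bar{t})^{-1/\min(p, 2)}$ as $s, \bar{t} \to t$, and cancelling these requires the Holmstedt reiteration to be performed with the normalization conventions of \cite{JawerthMilman} so that the spurious factors cancel against the reiteration constants exactly.
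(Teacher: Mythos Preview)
Your overall architecture matches the paper's exactly: the key estimate you propose is precisely the paper's Theorem~\ref{Theorem343Fract} (inequality~\eqref{Aux6}, noting $\dot{H}^{\bar{t},p}=\dot{F}^{\bar{t}}_{p,2}$), and once that is available the conclusion follows just as you say---the paper phrases the final weak-compactness step as an appeal to \cite[Lemma~2.6]{Braz}, which is the same argument you sketch.

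Where your proposal has a genuine gap is in the proof of the key estimate. You assert that ``the separation $\mu>1$ bounds $\eta=\bar{t}/s$ away from $1$'', and you rely on this to absorb the $(t-s)^{-1/p}$ blow-up. But this is false: with $\bar{t}=t-\mu(t-s)$ and $s\to t^-$ one has
\[
1-\eta=\frac{s-\bar{t}}{s}=\frac{(\mu-1)(t-s)}{s}\longrightarrow 0,
\]
so $\eta\to 1$. What the separation $\mu>1$ actually gives is only the \emph{relation} $1-\eta\approx(t-s)$, not a uniform lower bound on $1-\eta$. If you trace your route---Besov embedding, Holmstedt reiteration through the intermediate space $(L^p,\dot{H}^{t,p})_{s/t,p}$, then the multiplicative interpolation inequality with $\eta=\bar{t}/s$---the normalization constant $c_{\eta,q}\approx(1-\eta)^{1/\min(p,2)}\approx(t-s)^{1/\min(p,2)}$ does not cancel against the required $(t-s)^{1/p}$ in the way you describe; after Young's inequality you end up with $\|f\|_{L^p}+\|f\|_{\dot{W}^{s,p},t}$ on the right rather than $\|f\|_{L^p}+(t-s)^{1/p}\|f\|_{\dot{W}^{s,p},t}$.

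The paper avoids this difficulty by a different mechanism: instead of reiterating through an intermediate interpolation space, it uses the retraction of $(L^p,\dot{H}^{t,p})$ onto the vector-valued pair $(L^p(\ell_2),L^p(\ell_2^t))$ and reduces the key estimate to the sequence-space embedding
\[
(s(t-s))^{1/p}\,(\ell_2,\ell_2^t)_{s/t,\,p}\hookrightarrow \ell_2^{\bar r}=(\bar r(t-\bar r))^{1/2}\,(\ell_2,\ell_2^t)_{\bar r/t,\,2},
\]
which follows directly from the ordered-couple comparison Lemma~\ref{LemmaExtrapolSharp}(ii). This bypasses the multiplicative inequality entirely and produces the correct $(t-s)^{1/p}$ prefactor in one stroke.
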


To prove Theorem \ref{ThemQuestion1.11} we need an extension of Theorem
\ref{Theorem343} formulated in terms of the Butzer seminorms
$\Vert\cdot\Vert_{\dot{W}^{s,p}({\mathbb{R}}^{n}),t}$. It turns out that our
method is flexible enough to incorporate both, the case $s\rightarrow0+$ , as
well as to deal with inhomogeneous Triebel--Lizorkin spaces.

\begin{theorem}
\label{Theorem343Fract} Let $0 < s < t \leq1, \, p \in(1, \infty)$ and
$\Lambda> 1$.

\begin{enumerate}
\item[(1)] Assume $t-s\leq\frac{t}{2\Lambda}$ and let $t-\bar{r}=\Lambda
(t-s)$. Then, there exists $C$, which depends only on $n,p,t$ and $\Lambda$,
such that
\begin{equation}
\Vert f\Vert_{F_{p,2}^{\bar{r}}({\mathbb{R}}^{n})}\leq C\Big(\Vert
f\Vert_{L^{p}({\mathbb{R}}^{n})}+(t-s)^{\frac{1}{p}}\Vert f\Vert_{\dot
{W}^{s,p}({\mathbb{R}}^{n}),t}\Big).\label{Aux61}%
\end{equation}
There is a corresponding result for homogeneous Triebel--Lizorkin spaces,
\begin{equation}
\Vert f\Vert_{\dot{F}_{p,2}^{r}({\mathbb{R}}^{n})}\leq C\Big(\Vert
f\Vert_{L^{p}({\mathbb{R}}^{n})}+(t-s)^{\frac{1}{p}}\Vert f\Vert_{\dot
{W}^{s,p}({\mathbb{R}}^{n}),t}\Big),\qquad r\in\lbrack0,\bar{r}].\label{Aux6}%
\end{equation}

\item[(2)] Assume $s<\frac{1}{2\Lambda}$ and let $\bar{r}=\frac{1}{\Lambda}s$.
Then there exists $C$, which depends only on $n,p,t$ and $\Lambda$, such that
\begin{equation}
\Vert f\Vert_{F_{p,2}^{\bar{r}}({\mathbb{R}}^{n})}\leq C\Big(\Vert
f\Vert_{L^{p}({\mathbb{R}}^{n})}+s^{\frac{1}{p}}\Vert f\Vert_{\dot{W}%
^{s,p}({\mathbb{R}}^{n}),t}\Big). \label{Aux62}%
\end{equation}
Likewise, for homogeneous Triebel--Lizorkin spaces we have
\begin{equation}
\Vert f\Vert_{\dot{F}_{p,2}^{r}({\mathbb{R}}^{n})}\leq C\Big(\Vert
f\Vert_{L^{p}({\mathbb{R}}^{n})}+s^{\frac{1}{p}}\Vert f\Vert_{\dot{W}%
^{s,p}({\mathbb{R}}^{n}),t}\Big),\qquad r\in\lbrack0,\bar{r}]. \label{Aux6new}%
\end{equation}

\end{enumerate}
\end{theorem}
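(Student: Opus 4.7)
The plan is to deduce these inequalities from the interpolation framework of the paper. By Lemma \ref{TableCoincidences}(iii), $\dot F^r_{p,2}(\R^n) = \dot H^{r,p}(\R^n)$, and by the interpolation characterization given in Lemma \ref{LemmaIntFractGS} one has $\|f\|_{\dot W^{s,p}(\R^n), t} \asymp \|f\|_{(L^p(\R^n), \dot H^{t,p}(\R^n))_{s/t, p}}$ with equivalence constants independent of $s$. Thus \eqref{Aux6}--\eqref{Aux6new} can be read as a limiting Sobolev embedding inside the real-interpolation scale with end-points $L^p$ and $\dot H^{t,p}$. The normalization prefactor $(t-s)^{1/p}$ (resp.\ $s^{1/p}$) matches the real-interpolation normalization $c_{s/t, p} = ((s/t)(1-s/t)p)^{1/p}$ up to $s$-independent constants in the regime $s/t \in [1/2, 1)$ (resp.\ $s/t \in (0, 1/(2\Lambda)]$) dictated by the hypotheses of parts (1) and (2).

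First I would reduce to the case $r = \bar r$. For $r \leq \bar r$, the pointwise Fourier inequality $|\xi|^r \leq C(1 + |\xi|^{\bar r})$ gives
$$\|f\|_{\dot H^{r, p}(\R^n)} \leq C\big(\|f\|_{L^p(\R^n)} + \|f\|_{\dot H^{\bar r, p}(\R^n)}\big),$$
so \eqref{Aux6} for $r \in [0, \bar r]$ reduces to $r = \bar r$; the latter is equivalent to the inhomogeneous bound \eqref{Aux61} since $\|f\|_{F^{\bar r}_{p, 2}} \asymp \|f\|_{L^p} + \|f\|_{\dot F^{\bar r}_{p, 2}}$. For this main case, Lemma \ref{TableCoincidences}(i) yields $\|f\|_{\dot F^{\bar r}_{p, 2}} \lesssim \|f\|_{\dot B^{\bar r}_{p, q}}$ with $q = \min(p, 2)$, and one identifies the Besov norm via real interpolation: $\|f\|_{\dot B^{\bar r}_{p, q}} \asymp \|f\|_{(L^p, \dot H^{t, p})_{\bar r/t, q}}$.

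The heart of the proof is then a quantitative estimate on the intermediate real-interpolation norm. Using Holmstedt-type reiteration with the intermediate space $Y_s := (L^p, \dot H^{t, p})_{s/t, p}$, the scale is reorganized as $(L^p, \dot H^{t, p})_{\bar r/t, q} = (L^p, Y_s)_{\bar r/s, q}$; one then applies the normalized Gagliardo--Nirenberg inequality $c_{\bar r/s, q}\,\|f\|_{(L^p, Y_s)_{\bar r/s, q}} \leq \|f\|_{L^p}^{1-\bar r/s}\|f\|_{Y_s}^{\bar r/s}$ (cf.\ Section \ref{SectionMethod}) together with Young's inequality, and transcribes $\|f\|_{Y_s}$ back as a Butzer seminorm via Lemma \ref{LemmaIntFractGS}. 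The ratio $\bar r/s$ is controlled by $\Lambda$ (with $1 - \bar r/s = (\Lambda - 1)(t-s)/s$), and the normalization constant $c_{s/t, p} \asymp ((t-s)/t)^{1/p}$ in the regime $s/t \in [1/2, 1)$ produces precisely the $(t-s)^{1/p}$ prefactor; part (2) is identical with $s/t \to 0^+$ and $c_{s/t, p} \asymp (s/t)^{1/p}$ giving the $s^{1/p}$ prefactor.

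The main obstacle will be the delicate tracking of normalization constants through this chain of identifications---the Besov--real-interpolation equivalence, Holmstedt reiteration, Gagliardo--Nirenberg, and the Butzer--interpolation equivalence. These depend on $s$ through several instances of $c_{\cdot, \cdot}$, and the cancellations must produce exactly the power $(t-s)^{1/p}$ (resp.\ $s^{1/p}$). A weaker power would suffice for a qualitative statement, but not for the quantitative applications such as Theorem \ref{ThemQuestion1.11}, where the Butzer seminorm is allowed to blow up as $(t-s)^{-1/p}$ (resp.\ $s^{-1/p}$), and the $(t-s)^{1/p}$ (resp.\ $s^{1/p}$) weight is precisely what keeps the right-hand side bounded.
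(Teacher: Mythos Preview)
Your overall framing---reduce to $r=\bar r$, identify the Butzer seminorm with the real-interpolation norm via Lemma~\ref{LemmaIntFractGS}, and track the normalizations---is exactly the spirit of the paper's proof. However, the specific route you propose (pass through $\dot B^{\bar r}_{p,q}$ with $q=\min\{p,2\}$, reiterate via Holmstedt with the intermediate space $Y_s=(L^p,\dot H^{t,p})_{s/t,p}$, then apply Gagliardo--Nirenberg and Young) differs from the paper's and does not close. The paper instead uses the \emph{retraction method}: it identifies $F^{\bar r}_{p,2}$ and $(L^p,H^{t,p})_{s/t,p}$ with $L^p(\R^n;\ell_2^{\bar r}(\N_0))$ and $L^p(\R^n;(\ell_2(\N_0),\ell_2^t(\N_0))_{s/t,p})$ (the latter via Lemma~\ref{LemmaInterpolationLpVector}), reducing everything to the sequence-space embedding $(s(t-s))^{1/p}(\ell_2(\N_0),\ell_2^t(\N_0))_{s/t,p}\hookrightarrow \ell_2^{\bar r}(\N_0)$. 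Since this couple is \emph{ordered}, Lemma~\ref{LemmaExtrapolSharp}(ii) applies directly (with a short case split $p\le 2$ versus $p>2$); the homogeneous statement is then obtained by splitting $\Z=\N_0\cup\N_-$.

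Your chain runs into two concrete problems. First, the asserted equivalence $\|f\|_{\dot B^{\bar r}_{p,q}}\asymp\|f\|_{(L^p,\dot H^{t,p})_{\bar r/t,q}}$ with constants uniform in $\bar r$ is \emph{not} available from the paper's toolbox when $q\ne p$: Lemma~\ref{LemmaIntFractGS} treats only the diagonal $q=p$, and Lemma~\ref{LemmaInterpolationLpVector} requires the outer index to equal $p$. Second, even granting that step, the constants do not cancel to give the sharp power. For $p>2$ (so $q=2$), the Gagliardo--Nirenberg step contributes a factor $c_{\bar r/s,2}^{-1}\approx (t-s)^{-1/2}$ and the Holmstedt reiteration (Lemma~\ref{LemmaKMX2}(ii)) returns only $(1-\bar r/s)^{1/p}\approx(t-s)^{1/p}$; after Young's inequality the Butzer term carries $(t-s)^{1/p-1/2}$, which blows up and is strictly weaker than the required $(t-s)^{1/p}$. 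The paper's argument avoids this because at the sequence level one has the \emph{exact} identification $\ell_2^{\bar r}=(\bar r(t-\bar r))^{1/2}(\ell_2,\ell_2^t)_{\bar r/t,2}$ (Lemma~\ref{Lemma4} with $p=q=2$), and the factor $(t-\bar r)^{1/2}\approx(t-s)^{1/2}$ precisely compensates the $p>2$ branch of Lemma~\ref{LemmaExtrapolSharp}(ii). Your detour through Besov spaces obscures this cancellation.
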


We apply our method to provide an extension of \eqref{BBM}-\eqref{MS} in terms
of the fractional Laplacian and Butzer seminorms.

\begin{theorem}
\label{TheoremFBBM} Let $t\in(0,1]$ and $p\in(1,\infty)$. Assume $f\in
H^{t,p}({\mathbb{R}}^{n})$. Then
\begin{equation}
\lim_{s\rightarrow t^{-}}(t-s)^{\frac{1}{p}}\Vert f\Vert_{\dot{W}^{s,p}%
({\mathbb{R}}^{n}),t}\approx\Vert(-\Delta)^{\frac{t}{2}}f\Vert_{L^{p}%
({\mathbb{R}}^{n})} \label{FractGSAssertion}%
\end{equation}
and
\begin{equation}
\lim_{s\rightarrow0^{+}}s^{\frac{1}{p}}\Vert f\Vert_{\dot{W}^{s,p}({\mathbb{R}%
}^{n}),t}\approx\Vert f\Vert_{L^{p}({\mathbb{R}}^{n})}.
\label{FractGSAssertionMS}%
\end{equation}
The corresponding results for inhomogeneous spaces also hold true:
\begin{equation}
\lim_{s\rightarrow t^{-}}(t-s)^{\frac{1}{p}}\Vert f\Vert_{W^{s,p}%
({\mathbb{R}}^{n}),t}\approx \|f\|_{H^{t, p}(\R^n)}  \label{FractGSAssertionIn}%
\end{equation}
and
\begin{equation}
\lim_{s\rightarrow0^{+}}s^{\frac{1}{p}}\Vert f\Vert_{W^{s,p}({\mathbb{R}%
}^{n}),t}\approx\Vert f\Vert_{L^{p}({\mathbb{R}}^{n})}.
\label{FractGSAssertionMSIn}%
\end{equation}
\end{theorem}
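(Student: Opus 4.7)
The plan is to reduce Theorem \ref{TheoremFBBM} to the abstract interpolation limit formulae \eqref{m1}--\eqref{m4}, via the identification of the Butzer seminorms with real interpolation norms. This identification, supplied by Lemma \ref{LemmaIntFractGS}, reads: with equivalence constants independent of $\sigma \in (0,1)$,
\[
\|f\|_{(L^{p}(\mathbb{R}^{n}),\dot{H}^{t,p}(\mathbb{R}^{n}))_{\sigma,p}} \approx \|f\|_{\dot{W}^{\sigma t, p}(\mathbb{R}^{n}), t}.
\]

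For the homogeneous statements \eqref{FractGSAssertion}--\eqref{FractGSAssertionMS}, I set $\sigma = s/t$, so that $s$ traces $(0,t)$ as $\sigma$ traces $(0,1)$, and rewrite the identification as $\|f\|_{(L^{p}, \dot{H}^{t,p})_{s/t, p}} \approx \|f\|_{\dot{W}^{s,p}, t}$. The universal normalization constant from \eqref{m2} becomes
\[
c_{s/t, p} = \left(\frac{s(t-s)p}{t^{2}}\right)^{1/p},
\]
which differs from $(t-s)^{1/p}$ as $s \to t^{-}$, and from $s^{1/p}$ as $s \to 0^{+}$, only by a factor converging to a positive, $t$-dependent limit. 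Feeding this into \eqref{m1} and invoking the endpoint recovery \eqref{m4} for the pair $(L^{p}, \dot{H}^{t,p})$ produces
\[
\lim_{s \to t^{-}}(t-s)^{1/p}\|f\|_{\dot{W}^{s,p}, t} \approx \sup_{u>0}\frac{K(u, f; L^{p}, \dot{H}^{t,p})}{u} \approx \|(-\Delta)^{t/2}f\|_{L^{p}},
\]
which is \eqref{FractGSAssertion}; the symmetric argument using the $\sigma \to 0^{+}$ half of \eqref{m1} yields \eqref{FractGSAssertionMS}. The inhomogeneous statements \eqref{FractGSAssertionIn}--\eqref{FractGSAssertionMSIn} follow from the same scheme applied to the pair $(L^{p}, H^{t,p})$: the prefactor $(s(t-s)p)^{-1/p}$ in the definition \eqref{DefFractGSIn} of $\|\cdot\|_{W^{s,p}, t}$ is tuned precisely so that the $\min\{1, u^{t}\}\|f\|_{L^{p}}$ contribution to the $K$-functional \eqref{mod2} produces an $L^{p}$-term in the interpolation norm with constants that do not degenerate as $s \to 0^{+}$ or $s \to t^{-}$.

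The real work lies not in this reduction but in Lemma \ref{LemmaIntFractGS} itself. Two ingredients are needed, both with constants independent of the smoothness parameter: first, the Butzer--Wilmes--Kolomoitsev $K$-functional identity $K(u^{t}, f; L^{p}, \dot{H}^{t,p}) \approx \sup_{|h| \leq u}\|\Delta_{h}^{t}f\|_{L^{p}}$; and second, the endpoint recovery $\sup_{u>0}K(u,f)/u \approx \|f\|_{\dot{H}^{t,p}}$ for the nonlocal pair, where the upper estimate is immediate from the definition of the $K$-functional, but the reverse estimate requires a density/representation argument specific to $(-\Delta)^{t/2}$ on $L^{p}$. I expect this second point to be the principal obstacle, since it cannot be handled within the purely abstract framework of interpolation. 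Once both ingredients are secured, Theorem \ref{TheoremFBBM} is just the formal substitution described above.
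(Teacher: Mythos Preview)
Your proposal is correct and mirrors the paper's argument almost exactly: the paper also invokes Lemma \ref{LemmaIntFractGS} together with Theorem \ref{teomarkao} (the abstract limits \eqref{ProofFractGS1}--\eqref{ProofFractGS1New}) and the endpoint identities $\sup_{u>0}K(u,f)/u\approx\|(-\Delta)^{t/2}f\|_{L^p}$, $\sup_{u>0}K(u,f)\approx\|f\|_{L^p}$, citing Wilmes and Kolomoitsev--Tikhonov for the latter. The only cosmetic difference is that for the inhomogeneous statements the paper deduces \eqref{FractGSAssertionIn}--\eqref{FractGSAssertionMSIn} directly from the homogeneous ones via the definition \eqref{DefFractGSIn}, rather than re-running the interpolation scheme for the pair $(L^p,H^{t,p})$ as you propose; both routes are immediate.
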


\begin{remark}
Observe that \eqref{FractGSAssertion} and \eqref{FractGSAssertionMS} with $t=1$ give back, up to equivalence constants, the classical formulae \eqref{BBM} and \eqref{MS}.
\end{remark}

As indicated by Theorems \ref{ThemQuestion1.11} and \ref{TheoremFBBM}, the
classical seminorms $\Vert\cdot\Vert_{\dot{W}^{s,p}({\mathbb{R}}^{n})}$ are not the
optimal choices for dealing with Sobolev-type inequalities involving
$H^{t,p}({\mathbb{R}}^{n})$. This phenomenon is illustrated in the following result.

\begin{theorem}
\label{ThmSobolevBSY} Let $1<p<\infty$ and $0<r<t<1$. Then there exists
$C=C(n,p,r,t)>0,$ such that
\begin{equation}
\Vert f\Vert_{\dot{W}^{s,p}({\mathbb{R}}^{n})}\leq C((s-r)(t-s))^{\frac
{1}{\max\{p,2\}}}\bigg(\frac{1}{(s-r)^{\frac{1}{p}}}\Vert f\Vert_{\dot
{F}_{p,2}^{r}({\mathbb{R}}^{n})}+\frac{1}{(t-s)^{\frac{1}{p}}}\Vert
f\Vert_{\dot{F}_{p,2}^{t}({\mathbb{R}}^{n})}\bigg), \label{rst}%
\end{equation}
for every $s\in(r,t)$. In the limiting cases, $r=0$ and $t=1$, we have
\begin{equation}
\Vert f\Vert_{\dot{W}^{s,p}({\mathbb{R}}^{n})}\leq C(t-s)^{\frac{1}%
{\max\{p,2\}}}\bigg(\frac{1}{s^{\frac{1}{p}}}\Vert f\Vert_{L^{p}({\mathbb{R}%
}^{n})}+\frac{1}{(t-s)^{\frac{1}{p}}}\Vert(-\Delta)^{\frac{t}{2}}f\Vert
_{L^{p}({\mathbb{R}}^{n})}\bigg) \label{WLD}
\end{equation}
for $0 < t < 1$ and
\begin{equation}
\Vert f\Vert_{\dot{W}^{s,p}({\mathbb{R}}^{n})}\leq C(s-r)^{\frac{1}%
{\max\{p,2\}}}\bigg(\frac{1}{(s-r)^{\frac{1}{p}}}\Vert(-\Delta)^{\frac{r}{2}%
}f\Vert_{L^{p}({\mathbb{R}}^{n})}+\frac{1}{(1-s)^{\frac{1}{p}}}\Vert\nabla
f\Vert_{L^{p}({\mathbb{R}}^{n})}\bigg) \label{WLD2}%
\end{equation}
for $0 < r < 1$, respectively.
\end{theorem}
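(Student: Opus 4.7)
The strategy is real interpolation carried out at the level of the Littlewood--Paley decomposition. Set $\theta := (s-r)/(t-r) \in (0,1)$, $a_j := \|\Delta_j f\|_{L^p}$, $A := \|f\|_{\dot{F}^r_{p, 2}}$, $B := \|f\|_{\dot{F}^t_{p, 2}}$. By Lemma~\ref{TableCoincidences}(ii), $\|f\|_{\dot{W}^{s,p}} = \|f\|_{\dot{B}^s_{p,p}} = \bigl(\sum_j 2^{jsp}a_j^p\bigr)^{1/p}$, and the starting point is the pointwise identity $2^{jsp} a_j^p = (2^{jr}a_j)^{(1-\theta)p}(2^{jt}a_j)^{\theta p}$. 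The Triebel--Lizorkin/Besov embedding $\dot{F}^\sigma_{p,2} \hookrightarrow \dot{B}^\sigma_{p,q}$ from Lemma~\ref{TableCoincidences}(i), with $q := \max\{p,2\}$, will be used to pass from the Triebel--Lizorkin norms to the $\ell_q$-structured Besov norms that match the interpolation setup.

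For $p \geq 2$ (so $q = p$), a direct H\"older inequality on the $j$-sum with conjugate exponents $1/(1-\theta)$ and $1/\theta$ yields the log-convexity estimate
\[
\|f\|_{\dot{B}^s_{p,p}} \leq \|f\|_{\dot{B}^r_{p,p}}^{1-\theta}\|f\|_{\dot{B}^t_{p,p}}^{\theta} \leq C\, A^{1-\theta}B^{\theta}.
\]
For $p<2$ (so $q=2$) I split the sum at an integer cutoff $J$: on $j\leq J$, write $2^{jsp}a_j^p = 2^{j(s-r)p}(2^{2jr}a_j^2)^{p/2}$ and apply H\"older with conjugate exponents $2/(2-p)$ and $2/p$, obtaining
\[
\sum_{j\leq J} 2^{jsp}a_j^p \leq C_p\, 2^{J(s-r)p}(s-r)^{-(2-p)/2}A^p,
\]
where the blow-up factor $(s-r)^{-(2-p)/2}$ comes from the geometric-series estimate $1 - 2^{-2(s-r)p/(2-p)} \gtrsim (s-r)$. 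Symmetrically, $\sum_{j>J} 2^{jsp}a_j^p \leq C_p\, 2^{-J(t-s)p}(t-s)^{-(2-p)/2}B^p$. Optimizing over $J$ so as to balance the two terms and then applying Young's inequality $A^{1-\theta}B^\theta \leq (1-\theta)A + \theta B$ produces, after absorbing the bounded $(t-r)$-dependent factors into $C(n,p,r,t)$, exactly \eqref{rst} with the exponent $1/\max\{p,2\} = 1/2$.

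The principal obstacle is the $p<2$ case, where the cutoff optimization and the precise control of the geometric-series prefactors $(s-r)^{-(2-p)/2}$, $(t-s)^{-(2-p)/2}$ are exactly what produce the exponent $1/\max\{p,2\}$ in the final bound; a naive log-convexity argument fails because the target summability $p$ is strictly smaller than the $\ell_2$-structure of the ambient Triebel--Lizorkin norms. The limiting cases \eqref{WLD} ($r=0$) and \eqref{WLD2} ($t=1$) follow by the same argument after substituting $\dot{F}^0_{p,2} = L^p$ (Lemma~\ref{TableCoincidences}(iii)) and $\dot{F}^1_{p,2} = \dot{H}^{1,p} \approx \dot{W}^1_p$ via the Riesz-transform identity~\eqref{R}.
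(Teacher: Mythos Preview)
Your proof is correct for the main estimate \eqref{rst} and takes a genuinely different route from the paper. The paper argues abstractly: it applies Lemma~\ref{LemmaInterpolation12} to the pair $(\dot F^r_{p,2},\dot F^t_{p,2})$, passes to the Besov pair $(\dot B^r_{p,\max\{p,2\}},\dot B^t_{p,\max\{p,2\}})$ via Lemma~\ref{TableCoincidences}(i), identifies each Besov space as $(L^p,\dot W^1_p)_{\cdot,\max\{p,2\}}$ via Lemma~\ref{LemmaIntBesov}, and then invokes the sharp reiteration formula of Lemma~\ref{LemmaKMX2}(i) to extract the prefactor $(\theta(1-\theta))^{-1/\max\{p,2\}}$. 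Your argument instead works directly at the sequence level with $a_j=\|\Delta_jf\|_{L^p}$: log-convexity via H\"older handles $p\ge 2$, and for $p<2$ the cutoff-and-balance computation reproduces by hand exactly the content of the reiteration constant. Your approach is more elementary and self-contained (no $K$-functionals, no reiteration machinery); the paper's approach is more conceptual and shows why the exponent $1/\max\{p,2\}$ is forced by the interpolation structure. A small bonus of your route is that for $p\ge 2$ you actually obtain the stronger multiplicative bound $\|f\|_{\dot W^{s,p}}\lesssim A^{1-\theta}B^{\theta}$.

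One caution on the limiting cases: your sentence ``follow by the same argument'' relies on the identification $\|f\|_{\dot W^{s,p}}\approx\|f\|_{\dot B^s_{p,p}}$ from Lemma~\ref{TableCoincidences}(ii), but the equivalence constants there are \emph{not} uniform as $s\to 0$ or $s\to 1$ (this is precisely what Theorem~\ref{TheoremBSY1} quantifies). For \eqref{rst} this is harmless since $s\in(r,t)\subset\subset(0,1)$, but for \eqref{WLD} you let $s\to 0$ and for \eqref{WLD2} you let $s\to 1$. The paper sidesteps this by working with the interpolation norm $(L^p,\dot W^1_p)_{s,p}$ throughout (cf.\ \eqref{IntSob1}), which is uniformly equivalent to the Gagliardo seminorm for all $s\in(0,1)$, and then notes the limiting cases are ``easier''. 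Your sequence-space argument can be adapted, but you would need either to bound $\|f\|_{\dot W^{s,p}}$ directly via the modulus-of-continuity/interpolation characterization rather than via $\|f\|_{\dot B^s_{p,p}}$, or to track the blow-up in Lemma~\ref{TableCoincidences}(ii) and check it is absorbed by the right-hand side.
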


As an immediate consequence of \eqref{WLD} and \eqref{WLD2}, we obtain

\begin{corollary}\label{CorSharp}
Let $0<r<t<1$. Then
\begin{equation}
\sup_{s\in\lbrack r,t)}(t-s)^{\frac{1}{p}-\frac{1}{\max\{p,2\}}}\Vert
f\Vert_{\dot{W}^{s,p}({\mathbb{R}}^{n})}\leq C\Big(\Vert f\Vert_{L^{p}%
({\mathbb{R}}^{n})}+\Vert(-\Delta)^{\frac{t}{2}}f\Vert_{L^{p}({\mathbb{R}}%
^{n})}\Big) \label{Sharp}%
\end{equation}
and
\begin{equation}
\sup_{s\in(r,t]}(s-r)^{\frac{1}{p}-\frac{1}{\max\{p,2\}}}\Vert f\Vert_{\dot
{W}^{s,p}({\mathbb{R}}^{n})}\leq C\Big(\Vert(-\Delta)^{\frac{r}{2}}%
f\Vert_{L^{p}({\mathbb{R}}^{n})}+\Vert\nabla f\Vert_{L^{p}({\mathbb{R}}^{n}%
)}\Big). \label{Sharp2}%
\end{equation}

\end{corollary}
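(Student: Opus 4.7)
The plan is to apply the two inequalities \eqref{WLD} and \eqref{WLD2} of Theorem \ref{ThmSobolevBSY} directly, since Corollary \ref{CorSharp} is billed as an immediate consequence. The only work needed is an elementary rescaling together with a uniform boundedness argument for the auxiliary ratios that arise.

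First, to prove \eqref{Sharp}, I would multiply both sides of \eqref{WLD} by $(t-s)^{\frac{1}{p}-\frac{1}{\max\{p,2\}}}$, which is a nonnegative power since $\max\{p,2\}\geq p$. This cancels the prefactor $(t-s)^{\frac{1}{\max\{p,2\}}}$ against the $(t-s)^{-\frac{1}{p}}$ sitting next to $\|(-\Delta)^{t/2}f\|_{L^p}$, leaving that term unadorned. The term carrying $\|f\|_{L^p}$ acquires the factor $(t-s)^{\frac{1}{p}}/s^{\frac{1}{p}}$. Since $s\in[r,t)$ and $r>0$, one has
\[
\frac{(t-s)^{\frac{1}{p}}}{s^{\frac{1}{p}}}\leq \Bigl(\frac{t-r}{r}\Bigr)^{\frac{1}{p}},
\]
which is a constant depending only on $r,t,p$. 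Absorbing this constant and taking the supremum over $s\in[r,t)$ yields \eqref{Sharp}.

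The argument for \eqref{Sharp2} is symmetric: multiply \eqref{WLD2} by $(s-r)^{\frac{1}{p}-\frac{1}{\max\{p,2\}}}$, cancel factors appropriately so that $\|(-\Delta)^{r/2}f\|_{L^p}$ becomes unadorned, and observe that for $s\in(r,t]$ with $t<1$ the remaining ratio satisfies
\[
\frac{(s-r)^{\frac{1}{p}}}{(1-s)^{\frac{1}{p}}}\leq\Bigl(\frac{t-r}{1-t}\Bigr)^{\frac{1}{p}},
\]
a constant depending only on $r,t,p$. Taking the supremum over $s\in(r,t]$ delivers \eqref{Sharp2}.

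There is no serious obstacle here; the content of Corollary \ref{CorSharp} is purely bookkeeping once Theorem \ref{ThmSobolevBSY} is in hand. The only nontrivial observation is that the assumption $0<r<t<1$ keeps both $s$ and $1-s$ uniformly away from zero along the whole range of $s$, which is precisely what is needed to convert the pointwise inequalities into a supremum bound with finite constants. When $p\geq 2$ the rescaling exponent $\frac{1}{p}-\frac{1}{\max\{p,2\}}$ vanishes and the supremum is an honest one over $\|f\|_{\dot{W}^{s,p}({\mathbb{R}}^n)}$; when $p<2$ it is strictly positive, which correctly reflects the mild blow-up of the Gagliardo seminorm at the endpoints.
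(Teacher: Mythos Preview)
Your proposal is correct and follows exactly the approach indicated in the paper, which states that the corollary is ``an immediate consequence of \eqref{WLD} and \eqref{WLD2}'' without providing further details. You have merely filled in the elementary rescaling and the observation that the constraints $s\geq r>0$ (for \eqref{Sharp}) and $s\leq t<1$ (for \eqref{Sharp2}) keep the auxiliary ratios uniformly bounded, which is precisely the bookkeeping the paper leaves implicit.
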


\begin{remark}
\label{RemarkOptim} Theorem \ref{ThmSobolevBSY} is a considerable improvement of Theorem 1.6 in \cite{Braz}: Let $1 < p < \infty$ and $0 \leq r < t \leq 1$. Then there exists $C = C(n, p) > 0$ such that
\begin{equation}
\Vert f\Vert_{\dot{W}^{s,p}({\mathbb{R}}^{n})}\leq C\, \bigg(\frac{1}{(s-r)^{\frac{1}{p}}}\Vert f\Vert_{\dot
{F}_{p,2}^{r}({\mathbb{R}}^{n})}+\frac{1}{(t-s)^{\frac{1}{p}}}\Vert
f\Vert_{\dot{F}_{p,2}^{t}({\mathbb{R}}^{n})}\bigg), \qquad s \in (r, t). \label{rsnew}%
\end{equation}
The estimates provided in Theorem \ref{ThmSobolevBSY} sharpens \eqref{rsnew} due to the appearance of one of the additional prefactors $((s-r)(t-s))^{\frac{1}%
{\max\{p,2\}}}, (t-s)^{\frac{1}{\max\{p, 2\}}}$ and $(s-r)^{\frac{1}{\max\{p, 2\}}}$. Note that the constant $C$ in \eqref{rsnew} does not depend on $r$ and $t$, but this is not the case in \eqref{rst}. The reason behind this additional flexibility in \eqref{rsnew} is explained by its non-optimality. 

On the other hand, \eqref{Sharp} sharpens the following inequality obtained in Corollary 1.7 in \cite{Braz}: Let $1 < p < \infty$ and $0 < r < t \leq 1$. Then there exists $C = C(n, p, r) > 0$ such that
\begin{equation}
\sup_{s\in\lbrack r,t)}(t-s)^{\frac{1}{p}}\Vert
f\Vert_{\dot{W}^{s,p}({\mathbb{R}}^{n})}\leq C\Big(\Vert f\Vert_{L^{p}%
({\mathbb{R}}^{n})}+\Vert(-\Delta)^{\frac{t}{2}}f\Vert_{L^{p}({\mathbb{R}}%
^{n})}\Big). \label{1.5}
\end{equation}
In particular, this inequality with $t=1$ gives one of the estimates in the classical Bourgain--Brezis--Mironescu formula \eqref{BBM}. However, \eqref{1.5} is not optimal if $t \in (0, 1)$. In fact, as was already observed in \cite[Remark 1.8]{Braz}, the inequality \eqref{1.5} is only useful with $p < 2$. Specifically, if $p \geq 2$ and $0 < t < 1$, it is well known that
\begin{equation}\label{Tri}
	\sup_{s\in\lbrack r,t)} \Vert
f\Vert_{\dot{W}^{s,p}({\mathbb{R}}^{n})}\leq C\Big(\Vert f\Vert_{L^{p}%
({\mathbb{R}}^{n})}+\Vert(-\Delta)^{\frac{t}{2}}f\Vert_{L^{p}({\mathbb{R}}%
^{n})}\Big).
\end{equation}
This obstruction can be overcome via the inequality  \eqref{Sharp}. Specifically, if $p\in(1,2)$ then the prefactor $(t-s)^{\frac{1}{p}}$ on the
left-hand side of \eqref{1.5} is now improved by $(t-s)^{\frac{1}{p}-\frac
{1}{2}}$ in \eqref{Sharp}. On the other hand, if $p\in\lbrack2,\infty),$ then
\eqref{Sharp} coincides with the optimal estimate \eqref{Tri}. On the other hand, inequality \eqref{Sharp2}, with $p<2,$ seems to be new.
\end{remark}

We give a positive answer to Open Problem \ref{OpenProblemIntro1}. In fact, we
obtain a stronger version by means of removing the $\theta$-dependance from
the constant $C,$ and working with inhomogeneous norms on both sides of
\eqref{OpenProblemIntro1Eq}. Furthermore, our method works with the more
general family of seminorms formed by $\Vert\cdot\Vert_{\dot{W}%
^{s,p}({\mathbb{R}}^{n}),\alpha}$ (cf. \eqref{DefFractGS}) for arbitrary order of smoothness $\alpha > 0$.

\begin{theorem}
\label{ConjectureBSY} Let $p\in(1,\infty),\,\alpha>0,$ and $0<s<t<\alpha$.
Then there exists a positive constant $C$, independent of $s$ and $t$, such
that
\[
\Vert f\Vert_{L^{p}({\mathbb{R}}^{n})}+\min\{s,\alpha-s\}^{\frac{1}{p}}\Vert
f\Vert_{\dot{W}^{s,p}({\mathbb{R}}^{n}),\alpha}\leq C\Big(\Vert f\Vert
_{L^{p}({\mathbb{R}}^{n})}+\min\{t,\alpha-t\}^{\frac{1}{p}}\Vert f\Vert
_{\dot{W}^{t,p}({\mathbb{R}}^{n}),\alpha}\Big).
\]
In particular, if $\alpha=1$ then (cf. \eqref{FractModuInt})
\[
\Vert f\Vert_{L^{p}({\mathbb{R}}^{n})}+\min\{s,1-s\}^{\frac{1}{p}}\Vert
f\Vert_{\dot{W}^{s,p}({\mathbb{R}}^{n})}\leq C\Big(\Vert f\Vert_{L^{p}%
({\mathbb{R}}^{n})}+\min\{t,1-t\}^{\frac{1}{p}}\Vert f\Vert_{\dot{W}%
^{t,p}({\mathbb{R}}^{n})}\Big)
\]
for all $0<s<t<1$.
\end{theorem}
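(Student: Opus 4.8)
The plan is to recognize both sides of the asserted inequality as \emph{normalized} real $K$-interpolation norms of the \emph{ordered} couple $(L^{p}(\R^{n}),H^{\alpha,p}(\R^{n}))$, and then to invoke the monotonicity of such normalized norms in the interpolation parameter.

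First I would pass to interpolation. Fix $0<s<t<\alpha$ and set $\sigma:=s/\alpha$, $\tau:=t/\alpha\in(0,1)$. By Lemma~\ref{LemmaIntFractGS} (its inhomogeneous version) together with the definition \eqref{DefFractGSIn}, $\|f\|_{(L^{p}(\R^{n}),H^{\alpha,p}(\R^{n}))_{\sigma,p}}\approx\|f\|_{W^{s,p}(\R^{n}),\alpha}$ with constants independent of $\sigma$. Multiplying through by the universal constant $c_{\sigma,p}=(\sigma(1-\sigma)p)^{1/p}$ of \eqref{m2}, and using that $\sigma(1-\sigma)=s(\alpha-s)/\alpha^{2}$ — so that the prefactor $(s(\alpha-s)p)^{-1/p}$ in \eqref{DefFractGSIn} is cancelled up to the constant $\alpha^{-2/p}$, while $c_{\sigma,p}\approx\min\{s,\alpha-s\}^{1/p}$ since $s(\alpha-s)\approx\alpha\min\{s,\alpha-s\}$ — one obtains
\[
c_{\sigma,p}\,\|f\|_{(L^{p}(\R^{n}),H^{\alpha,p}(\R^{n}))_{\sigma,p}}\approx\|f\|_{L^{p}(\R^{n})}+\min\{s,\alpha-s\}^{1/p}\|f\|_{\dot W^{s,p}(\R^{n}),\alpha},
\]
with equivalence constants depending only on $n,p,\alpha$, and the analogous statement with $(\sigma,s)$ replaced by $(\tau,t)$. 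Hence the theorem is equivalent to the scaling-free estimate $c_{\sigma,p}\|f\|_{(X_{0},X_{1})_{\sigma,p}}\lesssim c_{\tau,p}\|f\|_{(X_{0},X_{1})_{\tau,p}}$ for $X_{0}=L^{p}(\R^{n})$, $X_{1}=H^{\alpha,p}(\R^{n})$ and $0<\sigma<\tau<1$.

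The crux is the following monotonicity, which I would establish for an arbitrary ordered couple $(X_{0},X_{1})$ with $\|\cdot\|_{X_{0}}\le\|\cdot\|_{X_{1}}$ (our pair being of this type): $c_{\sigma,p}\|f\|_{(X_{0},X_{1})_{\sigma,p}}\le c_{\tau,p}\|f\|_{(X_{0},X_{1})_{\tau,p}}$ whenever $0<\sigma<\tau<1$. This monotonicity is among the normalized-interpolation tools developed in Section~\ref{SectionMethod}, but the mechanism is short and worth isolating. Since the embedding constant is $\le1$, every decomposition $f=f_{0}+f_{1}$ satisfies $\|f_{0}\|_{X_{0}}+u\|f_{1}\|_{X_{1}}\ge\min\{1,u\}\|f\|_{X_{0}}$, so $K(u,f;X_{0},X_{1})=\|f\|_{X_{0}}$ for every $u\ge1$. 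Splitting the defining integral at $u=1$, the piece over $(1,\infty)$ is computed explicitly, and in the piece over $(0,1)$ I would substitute $u\mapsto 1/u$ and integrate by parts against the non-decreasing function $N(u):=\big(u\,K(1/u,f;X_{0},X_{1})\big)^{p}$ (note $N(1)=\|f\|_{X_{0}}^{p}$), producing the identity
\[
c_{\theta,p}^{\,p}\,\|f\|_{(X_{0},X_{1})_{\theta,p}}^{\,p}=\|f\|_{X_{0}}^{p}+\theta\int_{1}^{\infty}u^{-(1-\theta)p}\,dN(u).
\]
Since for $u\ge1$ the map $\theta\mapsto\theta\,u^{-(1-\theta)p}$ is non-decreasing and $dN\ge0$, the right-hand side is non-decreasing in $\theta$, which is the claim; applying it to our couple completes the proof with $C=C(n,p,\alpha)$. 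The case $\alpha=1$ then follows via \eqref{FractModuInt}.

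The main obstacle is conceptual rather than computational: the displayed monotonicity is \emph{false} for general couples, and it is the \emph{ordered} structure of $(L^{p},H^{\alpha,p})$ that rescues the argument — but this is usable only if one works with a seminorm realizing exactly this interpolation scale \emph{with the correct normalization}, namely the Butzer seminorm $\|\cdot\|_{\dot W^{s,p}(\R^{n}),\alpha}$ rather than the Gagliardo seminorm $\|\cdot\|_{\dot W^{s,p}(\R^{n})}$ (the two being comparable, for $\alpha\ne1$, only up to the blow-up \eqref{later}). This is precisely why the technique of \cite{Braz} could not reach the case $\alpha\ne1$. Once this is recognized, the remaining work is the routine bookkeeping of prefactors in the reduction step. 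A minor point to settle at the end: one may first prove the inequality for $f\in H^{\alpha,p}(\R^{n})$ and then extend it to arbitrary $f$ with finite right-hand side, since the $K$-functional identity above already encodes the required monotone passage to the limit.
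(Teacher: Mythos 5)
Your proposal is correct and follows essentially the paper's own route: both arguments reduce the statement via Lemma \ref{LemmaIntFractGS} (together with $s(\alpha-s)\approx\min\{s,\alpha-s\}$) to the monotonicity in $\theta$ of the normalized norms $(\theta(1-\theta)p)^{1/p}\Vert\cdot\Vert_{(L^{p}({\mathbb{R}}^{n}),H^{\alpha,p}({\mathbb{R}}^{n}))_{\theta,p}}$ for the ordered couple, which is exactly Lemma \ref{LemmaExtrapolSharp}(ii), i.e.\ \eqref{KMOrder}. The only difference is that you re-prove this monotonicity from scratch through your (correct) integration-by-parts identity for the $K$-functional, whereas the paper simply invokes it as cited from Karadzhov--Milman.
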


\section{Interpolation/extrapolation: A primer}

\label{SectionMethod}

The goal of this section is to present our interpolation/extrapolation based
methodology. These methods will be applied in Section \ref{SectionProofs} to prove all the results that were
stated in Section \ref{SectionMR}.

\subsection{Revisiting some classical interpolation formulae}

All the results contained in this subsection are well known. However, for the
purposes of this paper, it is important to establish these results with sharp
control on the dependance with respect to the parameters involved. For the
convenience of the reader, who may not be familiar with interpolation theory,
we give a self-contained exposition, since it is hard to find the material organized in a manner that fits our needs in this paper.

Let $(A_{0},A_{1})$ be a quasi-semi-normed pair. The \emph{real interpolation space}
$(A_{0},A_{1})_{\theta,q},\,\theta\in(0,1),\,q\in(0,\infty]$, is formed by all
the elements $f\in A_{0}+A_{1},$ such that
\begin{equation}
\Vert f\Vert_{(A_{0},A_{1})_{\theta,q}}:=\bigg(\int_{0}^{\infty}\left(
t^{-\theta}K(t,f;A_{0},A_{1})\right)  ^{q}\frac{dt}{t}\bigg)^{1/q}%
<\infty,\label{DefInt}%
\end{equation}
where $K(t,f;A_{0},A_{1})$ denotes the \emph{$K$-functional} for the pair
$(A_{0},A_{1}),$ defined for $t>0,$ by
\[
K(t,f;A_{0},A_{1}):=\inf_{f=f_{0}+f_{1}}\Big(\Vert f_{0}\Vert_{A_{0}}+t\Vert
f_{1}\Vert_{A_{1}}\Big).
\]
We shall simply write $K(t,f)$ when the underlying pair of spaces $(A_0, A_1)$ is
understood from the context. We refer to the standard references on
interpolation theory \cite{BennettSharpley, BerghLofstrom, Triebel}. Sometimes
it is more convenient to work with the modified $K$-functional given by
\[
K_{p}(t,f;A_{0},A_{1}):=K_{p}(t,f)=\inf_{f=f_{0}+f_{1}}\Big(\Vert f_{0}%
\Vert_{A_{0}}^{p}+t^{p}\Vert f_{1}\Vert_{A_{1}}^{p}\Big)^{\frac{1}{p}}%
\]
for $p\in(0,\infty)$. Clearly $K(t,f)\approx K_{p}(t,f)$ with equivalence
constants depending only on $p$. Furthermore, it is plain that
\begin{equation}
(A_{0},A_{1})_{\theta,q}=(A_{1},A_{0})_{1-\theta,q}%
\label{OrderedCouplesCommute}%
\end{equation}
with equality of norms.

Let $A$ be a Banach space and $p\in(0,\infty)$. The vector-valued Lebesgue
spaces $L^{p}({\mathbb{R}}^{n};A)$ are formed by all strongly measurable
functions $f:{\mathbb{R}}^{n}\rightarrow A$ such that
\[
\Vert f\Vert_{L^{p}({\mathbb{R}}^{n};A)}:=\bigg(\int_{{\mathbb{R}}^{n}}\Vert
f(x)\Vert_{A}^{p}\,dx\bigg)^{1/p}<\infty.
\]
The next result is well known.

\begin{lemma}
\label{LemmaInterpolationLpVector} Let $(A_{0},A_{1})$ be a pair of Banach
spaces and let $s\in(0,1),p\in(0,\infty)$. Then
\begin{equation*}
(L^{p}({\mathbb{R}}^{n};A_{0}),L^{p}({\mathbb{R}}^{n};A_{1}))_{s,p}%
=L^{p}({\mathbb{R}}^{n};(A_{0},A_{1})_{s,p}) 
\end{equation*}
and
\begin{equation*}
\Vert f\Vert_{(L^{p}({\mathbb{R}}^{n};A_{0}),L^{p}({\mathbb{R}}^{n}%
;A_{1}))_{s,p}}\approx\Vert f\Vert_{L^{p}({\mathbb{R}}^{n};(A_{0},A_{1}%
)_{s,p})} 
\end{equation*}
where the hidden equivalence constants depend only on $p$.
\end{lemma}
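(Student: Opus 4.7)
The plan is to reduce the identification to the well-known pointwise formula
\begin{equation*}
K_{p}(t,f;L^{p}({\mathbb{R}}^{n};A_{0}),L^{p}({\mathbb{R}}^{n};A_{1}))^{p}
\;\approx\; \int_{{\mathbb{R}}^{n}} K_{p}(t,f(x);A_{0},A_{1})^{p}\,dx,
\end{equation*}
with constants that depend only on $p$, and then invoke Fubini. I would first prove the easy inequality $(\geq)$. For any admissible decomposition $f=f_{0}+f_{1}$ in the vector-valued spaces one has pointwise $K_{p}(t,f(x);A_{0},A_{1})^{p}\leq \|f_{0}(x)\|_{A_{0}}^{p}+t^{p}\|f_{1}(x)\|_{A_{1}}^{p}$; integrating in $x$ and taking the infimum over decompositions gives $\int K_{p}(t,f(x))^{p}\,dx\leq K_{p}(t,f)^{p}$.

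For the reverse inequality $(\leq)$, I would produce, for each $\varepsilon>0$ and each $t>0$, a strongly measurable splitting $f(x)=f_{0}(t,x)+f_{1}(t,x)$ satisfying
\begin{equation*}
\|f_{0}(t,x)\|_{A_{0}}^{p}+t^{p}\|f_{1}(t,x)\|_{A_{1}}^{p}\leq (1+\varepsilon)^{p}\,K_{p}(t,f(x);A_{0},A_{1})^{p}
\end{equation*}
for almost every $x$. Integrating over ${\mathbb{R}}^{n}$ and letting $\varepsilon\to 0^{+}$ gives the matching upper bound on $K_{p}(t,f;L^{p}(A_{0}),L^{p}(A_{1}))^{p}$. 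This measurable selection is the main technical obstacle: one first verifies it for countably-valued (simple) $A_{0}+A_{1}$-valued functions by selecting decompositions on each level set, and then extends to general strongly measurable $f$ by approximation, using the $\varepsilon$-slack and the fact that $K_{p}(t,\cdot)$ is a seminorm on $A_{0}+A_{1}$ (so the decompositions glue together stably). Equivalently, one may invoke a standard measurable selection theorem (e.g.\ Kuratowski--Ryll-Nardzewski) applied to the multifunction $x\mapsto\{(a_{0},a_{1})\in A_{0}\times A_{1}:a_{0}+a_{1}=f(x)\}$ and the continuous cost $(a_{0},a_{1})\mapsto\|a_{0}\|_{A_{0}}^{p}+t^{p}\|a_{1}\|_{A_{1}}^{p}$.

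Once the pointwise identity is in hand, I would combine it with $K(t,g)\approx K_{p}(t,g)$ (with constants depending only on $p$), valid for every compatible pair, and apply Fubini--Tonelli:
\begin{align*}
\|f\|_{(L^{p}(A_{0}),L^{p}(A_{1}))_{s,p}}^{p}
&=\int_{0}^{\infty} t^{-sp}\,K(t,f)^{p}\,\tfrac{dt}{t}
\;\approx\; \int_{0}^{\infty} t^{-sp}\,K_{p}(t,f)^{p}\,\tfrac{dt}{t}\\
&\approx\; \int_{0}^{\infty}\!\!\!\int_{{\mathbb{R}}^{n}} t^{-sp}\,K_{p}(t,f(x))^{p}\,dx\,\tfrac{dt}{t}
\;=\; \int_{{\mathbb{R}}^{n}}\!\!\int_{0}^{\infty} t^{-sp}\,K_{p}(t,f(x))^{p}\,\tfrac{dt}{t}\,dx\\
&\approx\; \int_{{\mathbb{R}}^{n}} \|f(x)\|_{(A_{0},A_{1})_{s,p}}^{p}\,dx
\;=\; \|f\|_{L^{p}({\mathbb{R}}^{n};(A_{0},A_{1})_{s,p})}^{p},
\end{align*}
all constants depending only on $p$. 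This yields both the set-theoretic identification and the equivalence of norms. A minor item to verify at the end is that the right-hand space indeed sits inside $L^{p}(A_{0})+L^{p}(A_{1})$; this follows by using the pointwise decomposition at the single scale $t=1$, whose $L^{p}$-summability is guaranteed by finiteness of $\|f\|_{L^{p}(\mathbb{R}^{n};(A_{0},A_{1})_{s,p})}$ combined with the inclusion $(A_{0},A_{1})_{s,p}\hookrightarrow A_{0}+A_{1}$.
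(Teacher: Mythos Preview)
Your proposal is correct and follows essentially the same route as the paper: reduce to the pointwise identity $K(t,f;L^{p}(A_{0}),L^{p}(A_{1}))^{p}\approx\int_{{\mathbb{R}}^{n}}K_{p}(t,f(x);A_{0},A_{1})^{p}\,dx$ and then apply Fubini. The paper simply asserts this identity as ``plain'' and moves on, whereas you spell out the measurable-selection step needed for the harder inequality; your extra care is warranted but does not constitute a different approach.
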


\begin{proof}
It is plain that
\[
K(t,f;L^{p}({\mathbb{R}}^{n};A_{0}),L^{p}({\mathbb{R}}^{n};A_{1}%
))^{p}\approx\int_{{\mathbb{R}}^{n}}K_{p}(t,f(x);A_{0},A_{1})^{p}\,dx.
\]
Therefore using Fubini's theorem,
\begin{align*}
\Vert f\Vert_{(L^{p}({\mathbb{R}}^{n};A_{0}),L^{p}({\mathbb{R}}^{n}%
;A_{1}))_{s,p}}^{p} &  \approx\int_{0}^{\infty}t^{-sp}\int_{{\mathbb{R}}^{n}%
}K(t,f(x);A_{0},A_{1})^{p}\,dx\,\frac{dt}{t}\\
&  \approx\int_{{\mathbb{R}}^{n}}\Vert f(x)\Vert_{(A_{0},A_{1})_{s,p}}%
^{p}\,dx.
\end{align*}

\end{proof}

For $s\in{\mathbb{R}}$ and $p\in(0,\infty)$, the space $\ell_{p}%
^{s}({{\mathbb{Z}}})$ is formed by all the scalar-valued sequences $\xi=(\xi_{j}%
)_{j\in{{\mathbb{Z}}}},$ such that
\[
\Vert\xi\Vert_{\ell_{p}^{s}({{\mathbb{Z}}})}:=\bigg(\sum_{j=-\infty}^{\infty
}2^{jsp}|\xi_{j}|^{p}\bigg)^{1/p}<\infty.
\]
Similarly, we can define the spaces $\ell_{p}^{s}({{\mathbb{N}}}_{0})$ where
${{\mathbb{N}}}_{0}={{\mathbb{N}}}\cup\{0\}$.

\begin{lemma}
\label{Lemma4} Let $-\infty<s_{0}<s_{1}<\infty,\,\theta\in(0,1),s=(1-\theta
)s_{0}+\theta s_{1},$ and let $p,q\in(0,\infty)$. Then
\[
(\ell_{q}^{s_{0}}({{\mathbb{Z}}}),\ell_{q}^{s_{1}}({{\mathbb{Z}}}))_{\theta
,p}=\ell_{p}^{s}({{\mathbb{Z}}}),
\]
with
\[
\bigg(\frac{1}{\theta^{\frac{1}{\max\{p,q\}}}}+\frac{1}{(1-\theta)^{\frac
{1}{\max\{p,q\}}}}\bigg)\Vert\xi\Vert_{\ell_{p}^{s}({{\mathbb{Z}}})}%
\lesssim\Vert\xi\Vert_{(\ell_{q}^{s_{0}}({{\mathbb{Z}}}),\ell_{q}^{s_{1}%
}({{\mathbb{Z}}}))_{\theta,p}}\lesssim\bigg(\frac{1}{\theta^{\frac{1}%
{\min\{p,q\}}}}+\frac{1}{(1-\theta)^{\frac{1}{\min\{p,q\}}}}\bigg)\Vert
\xi\Vert_{\ell_{p}^{s}({{\mathbb{Z}}})}%
\]
uniformly w.r.t. $\theta$. The corresponding result also holds true for
${{\mathbb{N}}}_{0}$-indexed sequences.
\end{lemma}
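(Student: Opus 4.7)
The plan is to establish the identity by directly computing the $K$-functional of the pair $(\ell_q^{s_0}(\Z), \ell_q^{s_1}(\Z))$, substituting into the definition \eqref{DefInt} of the real interpolation norm, and tracking constants carefully through the resulting double integration and summation. The first step is to show, by a greedy splitting argument, that
\[
K(t, \xi; \ell_q^{s_0}(\Z), \ell_q^{s_1}(\Z)) \approx \bigg(\sum_{j \in \Z} |\xi_j|^q \min\bigl(2^{j s_0}, t\, 2^{j s_1}\bigr)^q\bigg)^{1/q}
\]
with constants independent of $t$ and $\theta$. The upper bound follows by placing each coordinate $\xi_j$ entirely in the space producing the smaller contribution at that index, and the matching lower bound uses the pointwise inequality $|\xi_j|\min(2^{j s_0}, t\, 2^{j s_1}) \leq |\xi_j^0| 2^{j s_0} + t |\xi_j^1| 2^{j s_1}$, valid for any decomposition $\xi_j = \xi_j^0 + \xi_j^1$, combined with the triangle inequality in $\ell_q$.

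The scalar identity
\[
\int_0^\infty t^{-\theta p} \min(a, t b)^p \frac{dt}{t} = \frac{a^{p(1-\theta)} b^{p\theta}}{p\, \theta(1-\theta)}, \qquad a, b > 0,
\]
encodes the sharp $(\theta(1-\theta))^{-1}$ blowup. In the diagonal case $p = q$, Fubini interchanges sum and integral, and the substitutions $a = 2^{j s_0}$, $b = 2^{j s_1}$ (so that $a^{1-\theta} b^\theta = 2^{j s}$) give after summation in $j$
\[
\|\xi\|_{(\ell_q^{s_0}(\Z), \ell_q^{s_1}(\Z))_{\theta, p}} \approx (p\,\theta(1-\theta))^{-1/p}\, \|\xi\|_{\ell_p^s(\Z)};
\]
since $\min\{p, q\} = \max\{p, q\} = p$, both sides of the claimed two-sided estimate collapse to this identity. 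For $p \neq q$, the interchange of $\int_0^\infty$ and $\sum_{j \in \Z}$ is effected by Minkowski's inequality: the subadditivity $(\sum_j a_j^q)^{p/q} \leq \sum_j a_j^p$ when $p \leq q$ and the Minkowski integral inequality with exponent $p/q \geq 1$ when $p \geq q$ yield the upper bound with the factor $(\theta(1-\theta))^{-1/\min\{p, q\}}$. The matching lower bound, with the less singular factor $(\theta(1-\theta))^{-1/\max\{p, q\}}$, is obtained analogously, with the roles of $\sum$ and $\int$ interchanged in the Minkowski step and the single-coordinate lower bound $K(t, \xi) \geq |\xi_j|\min(2^{j s_0}, t\, 2^{j s_1})$ summed dyadically at the switching points $t = 2^{j(s_0 - s_1)}$.

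The main obstacle is ensuring that after the Minkowski interchange the dependence on the individual endpoints $s_0, s_1$ cancels completely, leaving only the claimed $(\theta(1-\theta))^{-1/\min\{p,q\}}$ or $(\theta(1-\theta))^{-1/\max\{p,q\}}$ factor. This cancellation is transparent from the scalar identity above, whose right-hand side depends on $s_0, s_1$ only through the combination $a^{1-\theta} b^\theta = 2^{j s}$, so that summation in $j$ rebuilds exactly $\|\xi\|_{\ell_p^s(\Z)}^p$, and the prefactor $(p\,\theta(1-\theta))^{-1}$ is the only surviving $\theta$-dependent quantity. The result for $\N_0$-indexed sequences is proved identically, restricting every sum to $j \geq 0$.
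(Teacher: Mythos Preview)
Your scalar identity and the Fubini argument in the diagonal case $p=q$ are fine, and for two of the four off-diagonal estimates your approach works: when $p\le q$ the pointwise inequality $(\sum_j a_j^q)^{p/q}\le\sum_j a_j^p$ followed by Fubini and the scalar identity gives the upper bound with $(\theta(1-\theta))^{-1/p}$, and when $p\ge q$ the reverse pointwise inequality gives the lower bound with the same exponent. Those match the paper.

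The gap is in the remaining two estimates, where the exponent should be $-1/q$ rather than $-1/p$: the upper bound for $p>q$ and the lower bound for $p<q$. The Minkowski integral inequality you invoke for $p\ge q$ does \emph{not} produce a bound in terms of $\|\xi\|_{\ell_p^s}$. Applying it to $\int_0^\infty\bigl(\sum_j t^{-\theta q}a_j(t)^q\bigr)^{p/q}\,\frac{dt}{t}$ with $a_j(t)=|\xi_j|\min(2^{js_0},t\,2^{js_1})$ yields, after the scalar integration, the bound $(p\,\theta(1-\theta))^{-1/p}\|\xi\|_{\ell_q^s(\Z)}$: the target space is $\ell_q^s$, not $\ell_p^s$, and the exponent on the prefactor is $-1/p$, not the claimed $-1/q$. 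Similarly, your single-coordinate lower bound summed at the switching points $t=2^{j(s_0-s_1)}$ only gives $\|\xi\|_{(\ell_q^{s_0},\ell_q^{s_1})_{\theta,p}}\gtrsim\|\xi\|_{\ell_p^s}$ with no $\theta$-blowup at all, missing the factor $(\theta(1-\theta))^{-1/q}$ needed when $p<q$.

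These two estimates genuinely require a sharp weighted Hardy-type inequality, which is how the paper proceeds: after discretizing $t=2^{-l(s_1-s_0)}$ and splitting the inner sum at $j=l$, one obtains two Hardy-type double sums, and the sharp discrete Hardy inequality (for $p\ge q$) and Copson inequality (for $p<q$) supply exactly the constants $(1-\theta)^{-1/q}$ and $\theta^{-1/q}$. A plain Minkowski interchange cannot detect this, because it compares an $L^p(\ell_q)$ norm with an $\ell_q(L^p)$ norm, whereas the statement compares both sides with $\ell_p^s$.
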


\begin{proof}

It is well known and easy to see that
\[
K_{q}(t,\xi;\ell_{q}^{s_{0}}({{\mathbb{Z}}}),\ell_{q}^{s_{1}}({{\mathbb{Z}}%
}))=\bigg(\sum_{j=-\infty}^{\infty}[\min\{2^{js_{0}},2^{js_{1}}t\}|\xi
_{j}|]^{q}\bigg)^{1/q}.
\]
By the monotonicity properties of the $K$-functional, the following estimates
hold uniformly w.r.t. $\theta\in(0,1)$,
\begin{align}
\Vert\xi\Vert_{(\ell_{q}^{s_{0}}({{\mathbb{Z}}}),\ell_{q}^{s_{1}}%
({{\mathbb{Z}}}))_{\theta,p}} &  \approx\bigg(\sum_{l=-\infty}^{\infty
}2^{l\theta(s_{1}-s_{0})p}K(2^{-l(s_{1}-s_{0})},\xi;\ell_{q}^{s_{0}%
}({{\mathbb{Z}}}),\ell_{q}^{s_{1}}({{\mathbb{Z}}}))^{p}\bigg)^{1/p}\nonumber\\
&  \approx\bigg(\sum_{l=-\infty}^{\infty}2^{l\theta(s_{1}-s_{0})p}%
\bigg(\sum_{j=-\infty}^{\infty}[2^{js_{0}}\min\{1,2^{(j-l)(s_{1}-s_{0})}%
\}|\xi_{j}|]^{q}\bigg)^{p/q}\bigg)^{1/p}\nonumber\\
&  \approx A+B,\label{A+B}%
\end{align}
where
\[
A:=\bigg(\sum_{l=-\infty}^{\infty}2^{l(\theta-1)(s_{1}-s_{0})p}\bigg(\sum
_{j=-\infty}^{l}2^{js_{1}q}|\xi_{j}|^{q}\bigg)^{p/q}\bigg)^{1/p}%
\]
and
\[
B:=\bigg(\sum_{l=-\infty}^{\infty}2^{l\theta(s_{1}-s_{0})p}\bigg(\sum
_{j=l}^{\infty}2^{js_{0}q}|\xi_{j}|^{q}\bigg)^{p/q}\bigg)^{1/p}.
\]

We consider now two possible cases. Assume first $p\geq q.$
Therefore
\begin{align*}
A &  \geq\bigg(\sum_{l=-\infty}^{\infty}2^{l(\theta-1)(s_{1}-s_{0})p}%
\sum_{j=-\infty}^{l}2^{js_{1}p}|\xi_{j}|^{p}\bigg)^{1/p}\\
&  =\bigg(\sum_{j=-\infty}^{\infty}2^{js_{1}p}|\xi_{j}|^{p}\sum_{l=j}^{\infty
}2^{l(\theta-1)(s_{1}-s_{0})p}\bigg)^{1/p}\\
&  \approx\frac{1}{(1-\theta)^{1/p}}\bigg(\sum_{j=-\infty}^{\infty}2^{jsp}%
|\xi_{j}|^{p}\bigg)^{1/p}.
\end{align*}
By the sharp version of Hardy's inequality (see, e.g., \cite[p. 196]%
{SteinWeiss}),
\begin{align*}
A &  =\bigg(\sum_{l=-\infty}^{\infty}\bigg(2^{l(\theta-1)(s_{1}-s_{0})q}%
\sum_{j=-\infty}^{l}2^{js_{1}q}|\xi_{j}|^{q}\bigg)^{p/q}\bigg)^{1/p}\\
&  \lesssim\frac{1}{(1-\theta)^{1/q}}\bigg(\sum_{l=-\infty}^{\infty}%
2^{lsp}|\xi_{l}|^{p}\bigg)^{1/p}.
\end{align*}
Hence the term $A$ can be estimated by
\begin{equation}
\frac{1}{(1-\theta)^{1/p}}\bigg(\sum_{j=-\infty}^{\infty}2^{jsp}|\xi_{j}%
|^{p}\bigg)^{1/p}\lesssim A\lesssim\frac{1}{(1-\theta)^{1/q}}\bigg(\sum
_{j=-\infty}^{\infty}2^{j s p}|\xi_{j}|^{p}\bigg)^{1/p}.\label{A}%
\end{equation}
Furthermore, similar estimates also hold for $B$, namely,
\begin{equation}
\frac{1}{\theta^{1/p}}\bigg(\sum_{j=-\infty}^{\infty}2^{jsp}|\xi_{j}%
|^{p}\bigg)^{1/p}\lesssim B\lesssim\frac{1}{\theta^{1/q}}\bigg(\sum
_{j=-\infty}^{\infty}2^{jsp}|\xi_{j}|^{p}\bigg)^{1/p}.\label{B}%
\end{equation}
Combining \eqref{A+B}, \eqref{A} and \eqref{B}, we arrive at the desired
result under $p\geq q$.

Suppose now $p<q$. Then
\begin{align*}
A &  \leq\bigg(\sum_{l=-\infty}^{\infty}2^{l(\theta-1)(s_{1}-s_{0})p}%
\sum_{j=-\infty}^{l}2^{js_{1}p}|\xi_{j}|^{p}\bigg)^{1/p}\\
&  \approx\frac{1}{(1-\theta)^{1/p}}\bigg(\sum_{j=-\infty}^{\infty}2^{jsp}%
|\xi_{j}|^{p}\bigg)^{1/p}.
\end{align*}
On the other hand, by a sharp version of Copson's inequality (see, e.g., \cite{Leindler}) we find that
\[
A\gtrsim\frac{1}{(1-\theta)^{1/q}}\bigg(\sum_{j=-\infty}^{\infty}2^{jsp}%
|\xi_{j}|^{p}\bigg)^{1/p}.
\]
Hence
\[
\frac{1}{(1-\theta)^{1/q}}\bigg(\sum_{j=-\infty}^{\infty}2^{jsp}|\xi_{j}%
|^{p}\bigg)^{1/p}\lesssim A\lesssim\frac{1}{(1-\theta)^{1/p}}\bigg(\sum
_{j=-\infty}^{\infty}2^{jsp}|\xi_{j}|^{p}\bigg)^{1/p},
\]
with hidden constants of equivalence uniform w.r.t. $\theta\in(0,1)$. The term
$B$ can be estimated similarly. Combining estimates yields the desired result for $p < q$. 
\end{proof}

We close this subsection by recalling the well-known characterization of
$\dot{B}_{p,q}^{s}({\mathbb{R}}^{n})$ as an interpolation space between
$L^{p}({\mathbb{R}}^{n})$ and $\dot{W}^{k}_{p}({\mathbb{R}}^{n})$; see, e.g.,
\cite[Chapter 5, Corollary 4.13, page 341]{BennettSharpley} and \cite[Theorem
6.3.1, page 147]{BerghLofstrom}.

\begin{lemma}
\label{LemmaIntBesov} Let $s\in(0,1),\,k\in{{\mathbb{N}}}$ and $p\in
(1,\infty)$. Then\footnote{The limiting values $p=1,\infty$ are also
adimissible.}
\[
(L^{p}({\mathbb{R}}^{n}),\dot{W}_{p}^{k}({\mathbb{R}}^{n}))_{s,q}=\dot
{B}_{p,q}^{sk}({\mathbb{R}}^{n}).
\]

\end{lemma}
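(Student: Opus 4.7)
The plan is to reduce the real interpolation norm on the left to a modulus-of-smoothness integral by plugging in the known $K$-functional formula, and then to invoke the classical equivalence of the resulting integral with the Littlewood--Paley norm of $\dot{B}_{p,q}^{sk}({\mathbb{R}}^n)$.

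First I would insert \eqref{mod1} into the defining expression \eqref{DefInt}. More precisely, the pair $(L^{p}(\mathbb{R}^n), \dot{W}_{p}^{k}(\mathbb{R}^n))$ satisfies
\[
K(u^{k},f;L^{p}(\mathbb{R}^{n}),\dot{W}^{k}_{p}(\mathbb{R}^{n}))\ \approx\ \omega_{k}(f,u)_{p}:=\sup_{|h|\leq u}\|\Delta_{h}^{k}f\|_{L^{p}(\mathbb{R}^{n})},
\]
so performing the change of variable $t=u^{k}$ in \eqref{DefInt} yields, for every $q\in(0,\infty)$,
\[
\|f\|_{(L^{p}(\mathbb{R}^{n}),\dot{W}_{p}^{k}(\mathbb{R}^{n}))_{s,q}}\ \approx\ \bigg(\int_{0}^{\infty}\bigl(u^{-sk}\,\omega_{k}(f,u)_{p}\bigr)^{q}\frac{du}{u}\bigg)^{1/q},
\]
with the usual supremum modification if $q=\infty$.

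Next I would identify the right-hand side with $\|f\|_{\dot{B}_{p,q}^{sk}(\mathbb{R}^n)}$ by appealing to the classical equivalence between the modulus-of-smoothness description of homogeneous Besov spaces and the Littlewood--Paley description used in Section \ref{secfunctionspaces}. Since $sk\in(0,k)$, the order of differences $k$ is admissible (strictly larger than the smoothness index), so one has
\[
\bigg(\sum_{j=-\infty}^{\infty}2^{jskq}\|\Delta_{j}f\|_{L^{p}(\mathbb{R}^{n})}^{q}\bigg)^{1/q}\ \approx\ \bigg(\int_{0}^{\infty}\bigl(u^{-sk}\,\omega_{k}(f,u)_{p}\bigr)^{q}\frac{du}{u}\bigg)^{1/q}.
\]
Granting this identity, which is a standard fact of Littlewood--Paley theory proved e.g.\ in Triebel's monograph by combining Jackson-type and Bernstein-type estimates for dyadic frequency blocks, the lemma follows at once.

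The core analytic content sits in the second step: proving the equivalence between the Littlewood--Paley $\dot{B}_{p,q}^{sk}$-norm and the modulus-of-smoothness integral. This would be the main obstacle to a self-contained proof, and it requires a bit of care because one must quotient out polynomials of degree less than $k$ in the homogeneous setting and ensure that the order $k$ of the differences exceeds the smoothness $sk$. Both are true here, and since this equivalence is a classical result I would simply cite it (e.g.\ \cite[Chapter 5, Corollary 4.13]{BennettSharpley}, \cite[Theorem 6.3.1]{BerghLofstrom}) rather than reprove it. The first step is entirely mechanical once \eqref{mod1} is available.
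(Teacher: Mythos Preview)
Your proposal is correct, and in fact goes further than the paper: the paper does not prove Lemma~\ref{LemmaIntBesov} at all, but simply records it as a well-known fact with citations to \cite[Chapter 5, Corollary 4.13]{BennettSharpley} and \cite[Theorem 6.3.1]{BerghLofstrom}. Your two-step argument (insert the $K$-functional formula \eqref{mod1}, then invoke the classical modulus-of-smoothness characterization of $\dot{B}_{p,q}^{sk}$) is exactly the proof one finds in those references, and you cite the same sources, so there is nothing to compare.
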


\subsection{Extrapolation theory}

Generally speaking, the extrapolation theory of Jawerth and Milman
\cite{JawerthMilman} develops methods to recover the endpoint spaces of a
given parametrized scale of quasi-Banach spaces $\{A_{\theta}:\theta
\in(0,1)\}.$ In particular, it contains a careful analysis of the asymptotic
behavior of the related quasi-norms $\Vert\cdot\Vert_{A_{\theta}}$ as
$\theta\rightarrow0^{+}$ or $\theta\rightarrow1^{-}$ . In  \cite{JawerthMilman} it
is shown that $(\theta(1-\theta)p)^{\frac{1}{p}}$ is the appropriate
normalization factor for the real interpolation method $(A_{0},A_{1}%
)_{\theta,p}$ (cf. \eqref{DefInt}). This assertion is well-illustrated by the
following results.

\begin{lemma}
\label{LemmaInterpolation12} Let $p \in(0, \infty)$. Then there exists $C >
0$, which depends only on $p$, such that, for every $\theta\in(0, 1)$ and $f
\in A_{0} \cap A_{1}$,
\[
\|f\|_{(A_{0}, A_{1})_{\theta, p}} \leq C \Big( \frac{1}{\theta^{1/p}}
\|f\|_{A_{0}} + \frac{1}{(1-\theta)^{1/p}} \|f\|_{A_{1}} \Big).
\]

\end{lemma}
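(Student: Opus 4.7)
The plan is to obtain the bound directly from the definition \eqref{DefInt} of the interpolation norm, by using the two trivial decompositions of $f \in A_{0}\cap A_{1}$ and splitting the defining integral at a convenient point.

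First I would record the elementary pointwise estimates on the $K$-functional. Taking $f=f+0$ and $f=0+f$ as admissible decompositions gives, for all $t>0$,
\[
K(t,f;A_{0},A_{1}) \leq \|f\|_{A_{0}} \quad \text{and} \quad K(t,f;A_{0},A_{1}) \leq t\,\|f\|_{A_{1}}.
\]
Next I would split the integral defining $\|f\|_{(A_{0},A_{1})_{\theta,p}}^{p}$ at $t=1$ and apply the first bound on $(1,\infty)$ and the second bound on $(0,1)$:
\[
\int_{1}^{\infty}\bigl(t^{-\theta}K(t,f)\bigr)^{p}\,\frac{dt}{t} \leq \|f\|_{A_{0}}^{p}\int_{1}^{\infty}t^{-\theta p-1}\,dt = \frac{\|f\|_{A_{0}}^{p}}{\theta p},
\]
\[
\int_{0}^{1}\bigl(t^{-\theta}K(t,f)\bigr)^{p}\,\frac{dt}{t} \leq \|f\|_{A_{1}}^{p}\int_{0}^{1}t^{(1-\theta)p-1}\,dt = \frac{\|f\|_{A_{1}}^{p}}{(1-\theta)p}.
\]
Adding these yields
\[
\|f\|_{(A_{0},A_{1})_{\theta,p}}^{p} \leq \frac{1}{p}\left(\frac{\|f\|_{A_{0}}^{p}}{\theta} + \frac{\|f\|_{A_{1}}^{p}}{1-\theta}\right).
\]

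Finally I would take $p$-th roots and use the elementary scalar inequality $(a+b)^{1/p} \leq C_{p}(a^{1/p}+b^{1/p})$ (with $C_{p}=1$ for $p\geq 1$ by subadditivity of $x\mapsto x^{1/p}$, and $C_{p}=2^{1/p-1}$ for $0<p<1$ by convexity), arriving at
\[
\|f\|_{(A_{0},A_{1})_{\theta,p}} \leq C_{p}\,p^{-1/p}\left(\frac{\|f\|_{A_{0}}}{\theta^{1/p}} + \frac{\|f\|_{A_{1}}}{(1-\theta)^{1/p}}\right),
\]
which is the claim with $C=C_{p}\,p^{-1/p}$. There is no real obstacle here: the only subtlety is to split the integral at a $\theta$-independent point (so the two one-sided integrals are computed by elementary integration against $t^{\pm\theta p-1}$) and to track that the resulting factors $(\theta p)^{-1}$ and $((1-\theta)p)^{-1}$, once $p$-th roots are taken, produce exactly the normalizing prefactors $\theta^{-1/p}$ and $(1-\theta)^{-1/p}$ anticipated by the extrapolation philosophy recalled just before the statement.
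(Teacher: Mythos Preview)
Your proof is correct and follows essentially the same route as the paper: both use the trivial bounds $K(t,f)\le\min\{\|f\|_{A_0},\,t\|f\|_{A_1}\}$, split the defining integral at $t=1$, and compute the resulting elementary integrals to obtain $\|f\|_{(A_0,A_1)_{\theta,p}}^{p}\le \frac{1}{\theta p}\|f\|_{A_0}^{p}+\frac{1}{(1-\theta)p}\|f\|_{A_1}^{p}$. The only difference is cosmetic: you spell out the final step of taking $p$-th roots via $(a+b)^{1/p}\le C_p(a^{1/p}+b^{1/p})$, whereas the paper leaves that implicit.
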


\begin{proof}
Since $K(t,f)\leq\min\{\Vert f\Vert_{A_{0}},t\Vert f\Vert_{A_{1}}\}$, it
follows that
\begin{align*}
\Vert f\Vert_{(A_{0},A_{1})_{\theta,p}}^{p} &  =\int_{0}^{1}t^{-\theta
p}K(t,f)^{p}\frac{dt}{t}+\int_{1}^{\infty}t^{-\theta p}K(t,f)^{p}\frac{dt}%
{t}\\
&  \leq\Vert f\Vert_{A_{1}}^{p}\int_{0}^{1}t^{(1-\theta)p}\frac{dt}{t}+\Vert
f\Vert_{A_{0}}^{p}\int_{1}^{\infty}t^{-\theta p}\frac{dt}{t}\\
&  =\Vert f\Vert_{A_{1}}^{p}\frac{1}{(1-\theta)p}+\Vert f\Vert_{A_{0}}%
^{p}\frac{1}{\theta p}.
\end{align*}

\end{proof}

\begin{lemma}
\label{LemmaExtrapolSharp}

\begin{enumerate}
[\upshape(i)]

\item Assume $\theta\in(0, 1)$ and $0 < p \leq q < \infty$. Then
\[
(\theta(1-\theta) p)^{1/p} (A_{0}, A_{1})_{\theta, p} \hookrightarrow
(\theta(1-\theta) q)^{1/q} (A_{0}, A_{1})_{\theta, q}.
\]

\item Assume further that $(A_{0},A_{1})$ is ordered (i.e., $A_{1}%
\hookrightarrow A_{0}$). Let $0<\theta<\eta<1$ and let $p,q\in(0,\infty)$.
Then, the norm of the embedding\footnote{The assumption that the couple
$(A_{0},A_{1})$ is ordered is not restrictive, i.e., a corresponding embedding
result still holds for general couples $(A_{0},A_{1})$. However, this
embedding is simplified when dealing with ordered couples, which will be the
only case of interest in this paper.}
\[
(A_{0},A_{1})_{\eta,p}\hookrightarrow(A_{0},A_{1})_{\theta,q}%
\]
does not exceed
\[
\left\{
\begin{array}
[c]{cl}%
\frac{(\eta(1-\eta))^{1/p}}{(\theta(1-\theta))^{1/q}} & \text{if}\quad p\leq
q,\\
& \\
\frac{1}{(\eta-\theta)^{1/q-1/p}}+\frac{\eta^{1/p}(1-\eta)^{1/p}}{\theta
^{1/q}} & \text{if}\quad p>q,
\end{array}
\right.
\]
uniformly w.r.t. $\eta$ and $\theta$.
\end{enumerate}
\end{lemma}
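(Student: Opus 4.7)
For part (i), the key step is to establish the sharp extrapolation inequality $\|f\|_{(A_0,A_1)_{\theta,\infty}}\leq(\theta(1-\theta)p)^{1/p}\|f\|_{(A_0,A_1)_{\theta,p}}$. I would prove it by fixing $u>0$ and lower-bounding the integral defining $\|f\|_{\theta,p}^p$ on $(0,u)$ and $(u,\infty)$: the monotonicity of $t\mapsto K(t,f)$ (non-decreasing) on $(u,\infty)$ and of $t\mapsto K(t,f)/t$ (non-increasing) on $(0,u)$ produces lower bounds $(u^{-\theta}K(u,f))^p/(\theta p)$ and $(u^{-\theta}K(u,f))^p/((1-\theta)p)$; summing and taking the supremum over $u$ yields the desired bound. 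For $p\leq q$, the elementary interpolation inequality $\|f\|_{\theta,q}^q\leq\|f\|_{\theta,\infty}^{q-p}\|f\|_{\theta,p}^p$, combined with the above, gives the stated normalized embedding after a short rearrangement.

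For part (ii) I would work in the ordered setting with $A_1\hookrightarrow A_0$ normalized to embedding constant $1$, so that $K(t,f)=\|f\|_{A_0}$ for $t\geq 1$, and split $\|f\|_{\theta,q}^q=\int_0^1+\int_1^\infty$. The tail integral equals $\|f\|_{A_0}^q/(\theta q)$; invoking $\|f\|_{A_0}\leq\|f\|_{(A_0,A_1)_{\eta,\infty}}\leq(\eta(1-\eta)p)^{1/p}\|f\|_{(A_0,A_1)_{\eta,p}}$ from part (i) produces a contribution of order $(\eta(1-\eta))^{1/p}\theta^{-1/q}\|f\|_{(A_0,A_1)_{\eta,p}}$, which is exactly the second summand of the claimed $p>q$ bound. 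For the head integral I rewrite $(t^{-\theta}K)^q=t^{(\eta-\theta)q}(t^{-\eta}K)^q$: when $p>q$ I apply H\"older with conjugate exponents $p/q$ and $p/(p-q)$, so the weight $t^{(\eta-\theta)q}$ integrates over $(0,1)$ to yield the factor $(\eta-\theta)^{-(1/q-1/p)}$ after the final $q$th root, while the factor involving $K$ contributes $\|f\|_{(A_0,A_1)_{\eta,p}}$ via the identification of $L^{p/q}$-norms; this supplies the first summand and completes the $p>q$ case.

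When $p\leq q$, my plan is to first apply part (i) to reduce to the case $p=q$, so it remains to prove the uniform estimate $(\theta(1-\theta))^{1/p}\|f\|_{(A_0,A_1)_{\theta,p}}\lesssim(\eta(1-\eta))^{1/p}\|f\|_{(A_0,A_1)_{\eta,p}}$, a monotonicity in $\theta$ of the normalized interpolation norm valid in the ordered setting. This is the main technical obstacle: the example $f\equiv\mathrm{const}$ saturates the inequality (both sides equal $\|f\|_{A_0}/p^{1/p}$), so no loose constants can be absorbed. My approach is to apply the normalized Holmstedt reiteration formula from \cite{JawerthMilman} to the couple $(A_0,(A_0,A_1)_{\eta,p})$, identifying $\|f\|_{\theta,p}$ with an interpolation norm at parameter $\theta/\eta$; Lemma \ref{LemmaInterpolation12} together with the sharp absorption $\|f\|_{A_0}\lesssim(\eta(1-\eta)p)^{1/p}\|f\|_{(A_0,A_1)_{\eta,p}}$ from part (i) then produces the required $(1-\theta)^{1/p}$ factor in the denominator, provided the normalization constants in the reiteration are tracked carefully so that the prefactors $\eta$ and $\eta-\theta$ that appear from Holmstedt cancel against those supplied by the sharp $A_0$-bound.
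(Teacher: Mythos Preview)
Your treatment of part (i) and of part (ii) in the case $p>q$ is correct and matches the paper's proof essentially line by line: the paper also splits $\|f\|_{(A_0,A_1)_{\theta,q}}$ into the integral over $(0,1)$ plus the tail $\theta^{-1/q}\|f\|_{A_0}$, bounds the first by H\"older with exponents $p/q$ and $p/(p-q)$, and controls the second via $\|f\|_{A_0}\lesssim(\eta(1-\eta))^{1/p}\|f\|_{(A_0,A_1)_{\eta,p}}$. For (i) the paper simply cites \cite{JawerthMilman}; your derivation via $\|f\|_{\theta,\infty}\leq(\theta(1-\theta)p)^{1/p}\|f\|_{\theta,p}$ followed by $\|f\|_{\theta,q}^q\leq\|f\|_{\theta,\infty}^{q-p}\|f\|_{\theta,p}^p$ is exactly the standard argument behind that citation.

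The gap is in your handling of the $p\leq q$ case. Your reduction to $p=q$ via part (i) is fine (the paper does the same), but your proposed proof of the key monotonicity \eqref{KMOrder} via Holmstedt reiteration plus Lemma~\ref{LemmaInterpolation12} does not close. Concretely, if one combines the sharp reiteration bound $\|f\|_{(A_0,A_1)_{\theta,p}}\lesssim\eta^{-1/p}(1-\theta/\eta)^{1/p}\|f\|_{(A_0,(A_0,A_1)_{\eta,p})_{\theta/\eta,p}}$ with Lemma~\ref{LemmaInterpolation12} and the bound $\|f\|_{A_0}\lesssim(\eta(1-\eta))^{1/p}\|f\|_{(A_0,A_1)_{\eta,p}}$, one obtains
\[
\|f\|_{(A_0,A_1)_{\theta,p}}\lesssim\Big(\tfrac{((\eta-\theta)(1-\eta))^{1/p}}{\theta^{1/p}}+\tfrac{1}{\eta^{1/p}}\Big)\|f\|_{(A_0,A_1)_{\eta,p}}.
\]
The second term $\eta^{-1/p}$ is \emph{not} dominated by $(\eta(1-\eta))^{1/p}/(\theta(1-\theta))^{1/p}$ uniformly: take $\theta=\tfrac12$ fixed and let $\eta\to 1^-$; the left side stays bounded below while the right side tends to $0$. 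So the hoped-for cancellation of prefactors does not occur. The paper bypasses this by citing \cite[Corollary~2.1]{KaradzhovMilman} directly for \eqref{KMOrder}; that result is proved by a different, more direct argument (a careful splitting of the $K$-functional integral that exploits orderedness in a sharper way than Lemma~\ref{LemmaInterpolation12} allows), and you should either reproduce that argument or cite it.
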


\begin{proof}
The formula that was stated in (i) can be found in \cite[(3.2), p.
19]{JawerthMilman}. Concerning (ii) with $p=q$, i.e.,
\begin{equation}
(\eta(1-\eta))^{1/p}(A_{0},A_{1})_{\eta,p}\hookrightarrow(\theta
(1-\theta))^{1/p}(A_{0},A_{1})_{\theta,p},\label{KMOrder}%
\end{equation}
we refer to \cite[Corollary 2.1]{KaradzhovMilman}. Furthermore the case $p\leq
q$ in (ii) can be obtained as a direct consequence of (i) and \eqref{KMOrder}.

Next we concentrate on the case $p > q$ in (ii). Since $A_{1} \hookrightarrow
A_{0}$, it is plain to see that $K(t, f) \approx\|f\|_{A_{0}}$ for $t > 1$.
Then
\begin{equation}
\label{LemmaEmbConstantInterpolationProof1}\|f\|_{(A_{0}, A_{1})_{\theta, q}}
\approx\bigg(\int_{0}^{1} (t^{-\theta} K(t, f))^{q} \frac{dt}{t} \bigg)^{1/q}
+ \frac{1}{\theta^{1/q}} \|f\|_{A_{0}}.
\end{equation}

On the one hand, applying H\"{o}lder's inequality, we can estimate the
integral given on the right-hand side of
\eqref{LemmaEmbConstantInterpolationProof1} by
\begin{equation}
\bigg(\int_{0}^{1}(t^{-\theta}K(t,f))^{q}\frac{dt}{t}\bigg)^{1/q}\lesssim
\frac{1}{(\eta-\theta)^{1/q-1/p}}\bigg(\int_{0}^{1}(t^{-\eta}K(t,f))^{p}%
\frac{dt}{t}\bigg)^{1/p}. \label{LemmaEmbConstantInterpolationProof2}%
\end{equation}
On the other hand, using monotonicity properties of the $K$-functional, the
second term in the right-hand side of
\eqref{LemmaEmbConstantInterpolationProof1} can be dominated as follows
\begin{equation}
\Vert f\Vert_{A_{0}}\lesssim\min\{\eta,1-\eta\}^{1/p}\bigg(\int_{0}^{\infty
}(t^{-\eta}K(t,f))^{p}\frac{dt}{t}\bigg)^{1/p}.
\label{LemmaEmbConstantInterpolationProof3}%
\end{equation}
Inserting \eqref{LemmaEmbConstantInterpolationProof2} and
\eqref{LemmaEmbConstantInterpolationProof3} into
\eqref{LemmaEmbConstantInterpolationProof1}, we get
\[
\Vert f\Vert_{(A_{0},A_{1})_{\theta,q}}\lesssim\bigg(\frac{1}{(\eta
-\theta)^{1/q-1/p}}+\frac{\min\{\eta,1-\eta\}^{1/p}}{\theta^{1/q}}\bigg)\Vert
f\Vert_{(A_{0},A_{1})_{\eta,p}}.
\]

\end{proof}

The previous result tells us that the standard interpolation norm $\Vert
\cdot\Vert_{(A_{0},A_{1})_{\theta,p}}$ given in \eqref{DefInt} can be
renormalized by $(\theta(1-\theta)p)^{1/p}\Vert\cdot\Vert_{(A_{0}%
,A_{1})_{\theta,p}}$ so that sharp estimates w.r.t. the interpolation
parameter $\theta$ can now be achieved. Another important example of this
phenomenon occurs in reiteration formulas. Indeed, we recall the following
sharp versions of reiteration properties for the real method.

\begin{lemma}
[{\cite[Theorems 2.11 and 2.12]{KaradzhovMilman} and \cite[Theorem
3]{KaradzhovMilmanXiao}}]\label{LemmaKMX2} Let $s_{0}, s_{1}, \theta\in(0, 1)$
and $p, q \in(0, \infty)$.

\begin{enumerate}
[\upshape(i)]

\item Let $s=(1-\theta)s_{0}+\theta s_{1}$. The following embeddings hold
uniformly w.r.t. $\theta$
\begin{align*}
(\theta(1-\theta))^{-1/\min\{p,q\}}(A_{0},A_{1})_{s,p}  &  \hookrightarrow
((A_{0},A_{1})_{s_{0},q},(A_{0},A_{1})_{s_{1},q})_{\theta,p}\\
&  \hspace{-4cm}\hookrightarrow(\theta(1-\theta))^{-1/\max\{p,q\}}(A_{0}%
,A_{1})_{s,p}.
\end{align*}

\item The following embeddings hold uniformly w.r.t. $s_{1}$ and $\theta$
\begin{align*}
s_{1}^{1/p-1/q} [(1-s_{1})^{-1/q} + (1-\theta)^{-1/\min\{p, q\}}] (A_{0},
A_{1})_{\theta s_{1}, p}  & \\
&  \hspace{-8cm}\hookrightarrow(A_{0}, (A_{0}, A_{1})_{s_{1}, q})_{\theta, p}
\hookrightarrow s_{1}^{1/p} (1-\theta)^{-1/\max\{p, q\}} (A_{0}, A_{1})_{
\theta s_{1}, p}.
\end{align*}

\end{enumerate}
\end{lemma}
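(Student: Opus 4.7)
The statement is a sharp-parameter refinement of Holmstedt's classical reiteration theorem, attributed in the excerpt to Karadzhov--Milman and Karadzhov--Milman--Xiao. The plan is to follow the standard Holmstedt scheme, but to replace every application of Hardy's inequality by its sharp form, mirroring the analysis already carried out in Lemma \ref{Lemma4}.

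For part (i), set $\bar A_j := (A_0,A_1)_{s_j,q}$ for $j=0,1$. Holmstedt's formula gives, with absolute equivalence constants,
\[
K(t^{s_1-s_0},f;\bar A_0,\bar A_1) \approx \Bigl(\int_0^t (u^{-s_0}K(u,f))^q\,\tfrac{du}{u}\Bigr)^{1/q} + t^{s_1-s_0}\Bigl(\int_t^\infty (u^{-s_1}K(u,f))^q\,\tfrac{du}{u}\Bigr)^{1/q}.
\]
Substitute this into the definition of $\|\cdot\|_{(\bar A_0,\bar A_1)_{\theta,p}}$, change variables $t\mapsto t^{s_1-s_0}$, and apply to each iterated integral the sharp Hardy inequality (when $p\geq q$) or the sharp Copson inequality (when $p<q$), in both directions. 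As in Lemma \ref{Lemma4}, the sharp constants that appear are precisely of order $\theta^{-1/\min\{p,q\}}$ or $\theta^{-1/\max\{p,q\}}$, and analogously with $1-\theta$, which is the source of the announced $(\theta(1-\theta))^{-1/\min\{p,q\}}$ and $(\theta(1-\theta))^{-1/\max\{p,q\}}$ prefactors. The $s_1-s_0$ Jacobian produced by the change of variable cancels, so no residual dependence on $s_1-s_0$ remains.

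For part (ii), apply Holmstedt in the endpoint form corresponding to $s_0=0$, namely
\[
K(t^{s_1},f;A_0,(A_0,A_1)_{s_1,q}) \approx K(t,f;A_0,A_1) + t^{s_1}\Bigl(\int_t^\infty (u^{-s_1}K(u,f))^q\,\tfrac{du}{u}\Bigr)^{1/q},
\]
reflecting that $A_0$ is the common left endpoint of both pairs. Inserting into $\|\cdot\|_{(A_0,(A_0,A_1)_{s_1,q})_{\theta,p}}$ and changing variables $t\mapsto t^{s_1}$ produces the Jacobian factor $s_1^{1/p}$ visible in both embeddings. The lower embedding then follows from a direct sharp-Hardy step in its forward direction, contributing only the clean $(1-\theta)^{-1/\max\{p,q\}}$ factor. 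The upper embedding is the delicate direction: the tail integral becomes sensitive to $s_1\uparrow 1$, and a careful splitting of the outer range at the scale where the two Holmstedt summands balance, followed by H\"older's inequality to interchange the inner $L^q$ with the outer $L^p$, yields both the additive $(1-s_1)^{-1/q}$ contribution and the $s_1^{1/p-1/q}$ correction in the prefactor.

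The main obstacle is the sharp bookkeeping of constants: a standard proof of Holmstedt's reiteration merely absorbs everything into generic constants depending on $\theta,s_0,s_1$, whereas here every Hardy step must be performed in its sharp form, with the min/max split dictated by the sign of $p-q$ handled separately exactly as in Lemma \ref{Lemma4}. For part (ii) there is the additional subtlety that the argument must remain uniform as $s_1\uparrow 1$, so that $s_1$-dependent constants must be made explicit rather than absorbed; the additive $(1-s_1)^{-1/q}$ blow-up in the upper embedding is the unavoidable price of this endpoint degeneration.
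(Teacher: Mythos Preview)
The paper does not prove this lemma at all: it is stated with attribution to \cite[Theorems 2.11 and 2.12]{KaradzhovMilman} and \cite[Theorem 3]{KaradzhovMilmanXiao} and then used as a black box. So there is no ``paper's own proof'' to compare against.

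Your sketch is in line with the approach taken in those cited references: Holmstedt's reiteration formula combined with sharp Hardy/Copson inequalities tracked exactly as in Lemma~\ref{Lemma4}, with the $p\geq q$ and $p<q$ cases handled separately. As a high-level plan this is correct, though what you have written is a roadmap rather than a proof; the actual bookkeeping in part (ii), in particular isolating the $s_1^{1/p-1/q}$ factor and the additive $(1-s_1)^{-1/q}$ term, is somewhat involved and your description of ``a careful splitting of the outer range at the scale where the two Holmstedt summands balance'' does not by itself pin down the computation. If you intend to supply a self-contained proof rather than cite the references, you would need to carry out those steps explicitly.
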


A key role in our arguments is played by the continuity properties of general
interpolation scales obtained in \cite{Milman} (see also
\cite{KaradzhovMilmanXiao}). In particular, we will make use of the following
result for the real interpolation method.

\begin{theorem}
\label{teomarkao}Let $p>0$ and $f\in A_{0}\cap A_{1}$. Then
\begin{equation}
\lim_{\theta\rightarrow1^{-}}(\theta(1-\theta)p)^{\frac{1}{p}}\Vert
f\Vert_{(A_{0},A_{1})_{\theta,p}}=\sup_{t>0}\frac{K(t,f;A_{0},A_{1})}{t}
\label{ProofFractGS1}%
\end{equation}
and
\begin{equation}
\lim_{\theta\rightarrow0^{+}}(\theta(1-\theta)p)^{\frac{1}{p}}\Vert
f\Vert_{(A_{0},A_{1})_{\theta,p}}=\sup_{t>0}K(t,f;A_{0},A_{1}).
\label{ProofFractGS1New}%
\end{equation}

\end{theorem}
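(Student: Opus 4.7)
The plan is to convert the weighted integral defining $\|f\|_{(A_0,A_1)_{\theta,p}}^p$, via a change of variable that exposes the monotonicity of the $K$-functional, into an integral on $(0,\infty)$ whose integrand degenerates in a controlled way as $\theta$ approaches its limit; bounded convergence then delivers the result. The two monotonicity properties $t\mapsto K(t,f)$ non-decreasing and $t\mapsto K(t,f)/t$ non-increasing will supply the pointwise convergence, while the hypothesis $f\in A_0\cap A_1$, through the two trivial estimates $K(t,f)\le\|f\|_{A_0}$ and $K(t,f)\le t\|f\|_{A_1}$, will supply the integrable tail bounds.

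For \eqref{ProofFractGS1} I rewrite the defining integral as $\int_0^\infty t^{(1-\theta)p}(K(t,f)/t)^{p}\,dt/t$, set $g(t):=(K(t,f)/t)^{p}$ (non-increasing), and perform the change of variable $u=t^{(1-\theta)p}$ to obtain
\[
(\theta(1-\theta)p)\,\|f\|_{(A_0,A_1)_{\theta,p}}^{p}
= \theta\int_0^\infty g\bigl(u^{1/((1-\theta)p)}\bigr)\,du.
\]
The bound $g\le L^p:=(\sup_{t>0}K(t,f)/t)^{p}\le\|f\|_{A_1}^p$, together with the fact that $u^{1/((1-\theta)p)}\to 0^+$ on $(0,1)$ as $\theta\to 1^-$, gives via monotonicity of $g$ the pointwise limit $g(u^{1/((1-\theta)p)})\to L^p$ on $(0,1)$, so bounded convergence yields $\theta\int_0^1 g(\cdot)\,du\to L^p$. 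On the tail $(1,\infty)$ the estimate $g(s)\le\|f\|_{A_0}^p s^{-p}$ leads, after a direct computation, to
\[
\theta\int_1^\infty g\bigl(u^{1/((1-\theta)p)}\bigr)\,du \le (1-\theta)\,\|f\|_{A_0}^{p}\longrightarrow 0,
\]
and taking $p$-th roots yields \eqref{ProofFractGS1}.

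For \eqref{ProofFractGS1New} the dual substitution $u=t^{-\theta p}$ produces
\[
(\theta(1-\theta)p)\,\|f\|_{(A_0,A_1)_{\theta,p}}^{p}
= (1-\theta)\int_0^\infty K\bigl(u^{-1/(\theta p)},f\bigr)^{p}\,du;
\]
the same scheme then applies, using that $K(\cdot,f)$ is non-decreasing with finite limit $M:=\sup_{t>0}K(t,f)\le\|f\|_{A_0}$ to recover the leading $M^p$ from the interval $(0,1)$ by bounded convergence, and using $K(s,f)\le s\|f\|_{A_1}$ to bound the tail by $\theta\|f\|_{A_1}^{p}\to 0$. A shorter alternative is to deduce \eqref{ProofFractGS1New} directly from \eqref{ProofFractGS1} via \eqref{OrderedCouplesCommute} combined with the symmetry $K(t,f;A_1,A_0)=tK(1/t,f;A_0,A_1)$.

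The sole genuine technical obstacle is the absence of any obvious integrable majorant on $(1,\infty)$ after the change of variable; overcoming it is exactly what forces the use of the two endpoint bounds on $K(t,f)$, whose contributions come out of order $1-\theta$ and $\theta$ respectively and are therefore suppressed by the correct normalization in the corresponding limit.
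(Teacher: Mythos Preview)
Your argument is correct. The change of variable $u=t^{(1-\theta)p}$ (respectively $u=t^{-\theta p}$) does exactly what you claim: it transfers the asymptotics to a fixed interval where bounded convergence applies, and pushes the remainder into a tail that is killed by the normalization factor. The computations check out line by line, including the tail bound $\theta\int_1^\infty g(u^{1/((1-\theta)p)})\,du\le(1-\theta)\|f\|_{A_0}^p$, which follows from $g(s)\le\|f\|_{A_0}^p s^{-p}$ and the explicit evaluation $\int_1^\infty u^{-1/(1-\theta)}\,du=(1-\theta)/\theta$. The symmetry argument you mention for \eqref{ProofFractGS1New} via \eqref{OrderedCouplesCommute} is also valid and is indeed the cleanest route once \eqref{ProofFractGS1} is in hand.

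As for comparison with the paper: the paper does not prove Theorem~\ref{teomarkao} at all; it merely quotes the statement and refers to \cite{Milman} and \cite{KaradzhovMilmanXiao} for the proof. Your write-up therefore supplies a complete, self-contained argument where the paper gives none. The approach in \cite{Milman} is in the same spirit---it also exploits the monotonicity of $K(t,f)$ and $K(t,f)/t$ together with the endpoint bounds $K(t,f)\le\min\{\|f\|_{A_0},t\|f\|_{A_1}\}$---so your proof is essentially a clean repackaging of the standard argument rather than a genuinely different route.
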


\subsection{Butzer seminorms via interpolation}

A key result in our approach is that the family of semi-norms $\Vert\cdot
\Vert_{\dot{W}^{s,p}({\mathbb{R}}^{n}),t}$ introduced in \eqref{DefFractGS}
can be generated (with sharp constants) via interpolation of the classical
pair $(L^{p}({\mathbb{R}}^{n}),\dot{H}^{t,p}({\mathbb{R}}^{n}))$. This is the
assertion contained in the following

\begin{lemma}
\label{LemmaIntFractGS} Let $\theta\in(0,1),t\in(0,\infty)$ and $p\in
(1,\infty)$. Then
\begin{equation}
\Vert f\Vert_{(L^{p}({\mathbb{R}}^{n}),\dot{H}^{t,p}({\mathbb{R}}%
^{n}))_{\theta,p}}\approx\Vert f\Vert_{\dot{W}^{\theta t,p}({\mathbb{R}}%
^{n}),t} \label{LemmaIntFractGSEq1}%
\end{equation}
and
\begin{equation}
\Vert f\Vert_{(L^{p}({\mathbb{R}}^{n}),H^{t,p}({\mathbb{R}}^{n}))_{\theta,p}%
}\approx \Vert f\Vert_{W^{\theta t,p}({\mathbb{R}}^{n}),t}
\label{LemmaIntFractGSEq2}%
\end{equation}
where hidden equivalence constants are independent of $\theta$ (but depending
on $n,p$ and $t$).
\end{lemma}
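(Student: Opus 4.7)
The plan is direct: compute the $K$-functional for the pair and match the resulting integral with the Butzer seminorm, tracking every constant uniformly in $\theta$. Starting from the identity recalled in the Preamble,
\[
K(u^{t}, f; L^{p}(\R^{n}), \dot H^{t,p}(\R^{n})) \;\approx\; \omega_{t}(f, u)_{p} := \sup_{|h| \leq u} \|\Delta_{h}^{t} f\|_{L^{p}(\R^{n})},
\]
inserting this into the definition \eqref{DefInt} of the real interpolation norm and performing the substitution $u = v^{t}$ (so $du/u = t\,dv/v$ and $u^{\theta p} = v^{\theta t p}$) yields
\[
\|f\|_{(L^{p}, \dot H^{t,p})_{\theta, p}}^{p} \;\approx\; t \int_{0}^{\infty} v^{-\theta t p}\, \omega_{t}(f, v)_{p}^{p} \,\frac{dv}{v},
\]
with equivalence constants depending only on $n, p, t$.

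On the other hand, passing to polar coordinates $h = v\omega$ in the Butzer seminorm \eqref{DefFractGS} gives
\[
\|f\|_{\dot W^{\theta t, p}, t}^{p} = \int_{0}^{\infty} v^{-\theta t p} \int_{S^{n-1}} \|\Delta_{v\omega}^{t} f\|_{L^{p}}^{p} \, d\sigma(\omega) \, \frac{dv}{v},
\]
and, equivalently by Fubini applied to the original double integral,
\[
\|f\|_{\dot W^{\theta t, p}, t}^{p} = (\theta t p + n) \int_{0}^{\infty} v^{-\theta t p}\, \tilde\omega_{t}(f, v)_{p}^{p} \,\frac{dv}{v}, \qquad \tilde\omega_{t}(f, v)_{p}^{p} := v^{-n} \int_{|h| \leq v} \|\Delta_{h}^{t} f\|_{L^{p}}^{p}\, dh.
\]
Since $\theta t p + n$ stays in $(n, n + tp)$ for $\theta \in (0, 1)$, this prefactor is uniformly bounded above and below.

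The main obstacle is therefore to establish the uniform equivalence $\omega_{t}(f, v)_{p} \approx \tilde\omega_{t}(f, v)_{p}$, with constants independent of $\theta$. The ``averaged $\lesssim$ sup'' direction is immediate; the converse is less trivial because the sup takes every $h$ with $|h| \leq v$ into account while the average is a genuine mean. The route I would take is to invoke the companion $K$-functional identity $K(v^{t}, f; L^{p}, \dot H^{t,p}) \approx \tilde\omega_{t}(f, v)_{p}$ (a realization result for Riesz potentials of Butzer--Wilmes/Kolomoitsev type) which, combined with the sup-version already in hand, forces $\omega_{t}(f, v)_{p} \approx \tilde\omega_{t}(f, v)_{p}$ pointwise in $v$ with constants depending only on $n, p, t$; this delivers \eqref{LemmaIntFractGSEq1}.

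For the inhomogeneous statement \eqref{LemmaIntFractGSEq2}, I would repeat the scheme with the companion formula $K(u^{t}, f; L^{p}, H^{t,p}) \approx \min\{1, u^{t}\} \|f\|_{L^{p}} + \omega_{t}(f, u)_{p}$. The $\omega_{t}$-piece is handled exactly as before, while the $L^{p}$-piece contributes
\[
\|f\|_{L^{p}}^{p} \int_{0}^{\infty} u^{-\theta p} \min\{1, u\}^{p} \frac{du}{u} = \frac{\|f\|_{L^{p}}^{p}}{p\,\theta(1 - \theta)},
\]
which, after the $u = v^{t}$ substitution, matches (up to a multiplicative constant depending only on $t$ and $p$) the prefactor $(s(t-s)p)^{-1/p}$ in \eqref{DefFractGSIn} with $s = \theta t$, because $s(t-s) p = \theta t^{2}(1-\theta) p$; summing the two contributions yields \eqref{LemmaIntFractGSEq2}.
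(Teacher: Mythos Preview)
Your proposal is correct and follows the same route as the paper: insert the known $K$-functional characterization of $(L^p,\dot H^{t,p})$ in terms of the fractional modulus of smoothness into the interpolation norm, then match the resulting integral with the Butzer seminorm, and handle the inhomogeneous case by adding the $\min\{1,u\}\|f\|_{L^p}$ piece and integrating it explicitly. The only cosmetic difference is that you pass through the $L^p$-averaged modulus $\tilde\omega_t$ (linked to the Butzer seminorm by an exact Fubini identity with the harmless factor $\theta t p + n$) and then invoke the pointwise equivalence $\omega_t \approx \tilde\omega_t$---which indeed follows by sandwiching the $L^p$-average between the $L^1$-average and the sup, both known to be $\approx K$---whereas the paper works with the $L^1$-averaged modulus and proves the integral equivalence \eqref{ClaimDefFractGS} directly via H\"older, Fubini, and a dyadic decomposition; both arguments rest on the same pair of $K$-functional identities \eqref{ProofFractGS1*}.
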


\begin{remark}
Letting $t=1$ in the previous result (cf. \eqref{SobRiesz} and
\eqref{FractModuInt}), we have
\begin{equation}
\Vert f\Vert_{(L^{p}({\mathbb{R}}^{n}),\dot{W}_{p}^{1}({\mathbb{R}}%
^{n}))_{\theta,p}}\approx\Vert f\Vert_{\dot{W}^{\theta,p}({\mathbb{R}}^{n})}
\label{IntSob1}%
\end{equation}
and, cf. \eqref{DefFractGSIn},
\[
\Vert f\Vert_{(L^{p}({\mathbb{R}}^{n}),W_{p}^{1}({\mathbb{R}}^{n}))_{\theta
,p}}\approx\frac{1}{(\theta(1-\theta)p)^{\frac{1}{p}}}\Vert f\Vert
_{L^{p}({\mathbb{R}}^{n})}+\Vert f\Vert_{\dot{W}^{\theta,p}({\mathbb{R}}^{n}%
)}.
\]
In particular, \eqref{IntSob1} was already shown in \cite[Lemma 1]{Milman}.
\end{remark}

\begin{proof}
[Proof of Lemma \ref{LemmaIntFractGS}]We shall employ the characterizations of
the corresponding $K$-functional given by
\begin{equation}
K(u^{t},f;L^{p}({\mathbb{R}}^{n}),\dot{H}^{t,p}({\mathbb{R}}^{n}))\approx
\frac{1}{u^{n}}\int_{|h|<u}\Vert\Delta_{h}^{t}f\Vert_{L^{p}({\mathbb{R}}^{n}%
)}\,dh\approx\sup_{|h|<u}\Vert\Delta_{h}^{t}f\Vert_{L^{p}({\mathbb{R}}^{n})}
\label{ProofFractGS1*}%
\end{equation}
for $u>0$; this characterization is well known in the classical case
$t=k\in{{\mathbb{N}}}$ (see, e.g., \cite[Chapter 5, (4.42), p. 341]%
{BennettSharpley} and \cite[Theorem 6.7.3]{BerghLofstrom}), for the general
case $t\in(0,\infty)$ we refer to \cite[(1.10)]{Kolomoitsev} (with \cite{Butzer} and
\cite{Wilmes} as a forerunners).

Inserting the first equivalence in \eqref{ProofFractGS1*} into the definition
of interpolation space (cf. \eqref{DefInt}), we have
\begin{equation*}
\Vert f\Vert_{(L^{p}({\mathbb{R}}^{n}),\dot{H}^{t,p}({\mathbb{R}}%
^{n}))_{\theta,p}}\approx\bigg(\int_{0}^{\infty}u^{-\theta tp-np}%
\bigg(\int_{|h|<u}\Vert\Delta_{h}^{t}f\Vert_{L^{p}({\mathbb{R}}^{n}%
)}\,dh\bigg)^{p}\frac{du}{u}\bigg)^{1/p}.
\end{equation*}
Consequently, the desired result \eqref{LemmaIntFractGSEq1} would be
established once we are able to prove (cf. \eqref{DefFractGS})
\begin{equation}
\Vert f\Vert_{\dot{W}^{\theta t,p}({\mathbb{R}}^{n}),t}\approx\bigg(\int%
_{0}^{\infty}u^{-\theta tp-np}\bigg(\int_{|h|<u}\Vert\Delta_{h}^{t}%
f\Vert_{L^{p}({\mathbb{R}}^{n})}\,dh\bigg)^{p}\frac{du}{u}\bigg)^{1/p}%
\label{ClaimDefFractGS}%
\end{equation}
uniformly w.r.t. $\theta\in(0,1)$.

To deal with the estimate $\gtrsim$ in \eqref{ClaimDefFractGS}, we apply
H\"{o}lder's inequality and change the order of integration so that
\begin{align*}
\int_{0}^{\infty}u^{-\theta tp-np}\bigg(\int_{|h|<u}\Vert\Delta_{h}^{t}%
f\Vert_{L^{p}({\mathbb{R}}^{n})}\,dh\bigg)^{p}\frac{du}{u} &  \lesssim\int%
_{0}^{\infty}u^{-\theta tp-n}\int_{|h|<u}\Vert\Delta_{h}^{t}f\Vert
_{L^{p}({\mathbb{R}}^{n})}^{p}\,dh\frac{du}{u}\\
&  \hspace{-6cm}\approx\int_{{\mathbb{R}}^{n}}|h|^{-\theta tp-n}\Vert
\Delta_{h}^{t}f\Vert_{L^{p}({\mathbb{R}}^{n})}^{p}\,dh=\Vert f\Vert_{\dot
{W}^{\theta t,p}({\mathbb{R}}^{n}),t}^{p}.
\end{align*}
Concerning the estimate $\lesssim$ in \eqref{ClaimDefFractGS}, we use basic
monotonicity properties and the second estimate given in
\eqref{ProofFractGS1*} to get
\begin{align*}
\Vert f\Vert_{\dot{W}^{\theta t,p}({\mathbb{R}}^{n}),t}^{p} &  =\int%
_{{\mathbb{R}}^{n}}\frac{\Vert\Delta_{h}^{t}f\Vert_{L^{p}({\mathbb{R}}^{n}%
)}^{p}}{|h|^{\theta tp+n}}\,dh\\
&  =\sum_{j=-\infty}^{\infty}\int_{2^{j-1}<|h|\leq2^{j}}\frac{\Vert\Delta
_{h}^{t}f\Vert_{L^{p}({\mathbb{R}}^{n})}^{p}}{|h|^{\theta tp+n}}\,dh\\
&  \lesssim\sum_{j=-\infty}^{\infty}2^{-j\theta tp}\sup_{|h|\leq2^{j}}%
\Vert\Delta_{h}^{t}f\Vert_{L^{p}({\mathbb{R}}^{n})}^{p}\\
&  \approx\sum_{j=-\infty}^{\infty}2^{-j\theta tp}\bigg(\frac{1}{2^{j n}}%
\int_{|h|<2^{j}}\Vert\Delta_{h}^{t}f\Vert_{L^{p}({\mathbb{R}}^{n}%
)}\,dh\bigg)^{p}.
\end{align*}

Next we focus on \eqref{LemmaIntFractGSEq2}. The $K$-functional for the pair
$(L^{p}({\mathbb{R}}^{n}),H^{t,p}({\mathbb{R}}^{n}))$ can be estimated as (see
\cite[(4.2)]{Wilmes})
\[
K(u,f;L^{p}({\mathbb{R}}^{n}),H^{t,p}({\mathbb{R}}^{n}))\approx\min
\{1,u\}\Vert f\Vert_{L^{p}({\mathbb{R}}^{n})}+\sup_{|h|\leq u^{\frac{1}{t}}%
}\Vert\Delta_{h}^{t}f\Vert_{L^{p}({\mathbb{R}}^{n})}%
\]
for $u>0$ and $f\in L^{p}({\mathbb{R}}^{n})$. Therefore, by
\eqref{ProofFractGS1*},
\[
K(u,f;L^{p}({\mathbb{R}}^{n}),H^{t,p}({\mathbb{R}}^{n}))\approx\min
\{1,u\}\Vert f\Vert_{L^{p}({\mathbb{R}}^{n})}+K(u,f;L^{p}({\mathbb{R}}%
^{n}),\dot{H}^{t,p}({\mathbb{R}}^{n})).
\]
By elementary computations we find
\begin{align*}
\Vert f\Vert_{(L^{p}({\mathbb{R}}^{n}),H^{t,p}({\mathbb{R}}^{n}))_{\theta,p}%
}^{p} &  \approx\bigg(\int_{0}^{\infty}u^{-\theta p}\min\{1,u\}^{p}\frac
{du}{u}\bigg)\Vert f\Vert_{L^{p}({\mathbb{R}}^{n})}^{p}+\Vert f\Vert
_{(L^{p}({\mathbb{R}}^{n}),\dot{H}^{t,p}({\mathbb{R}}^{n}))_{\theta,p}}^{p}\\
&  =\frac{1}{\theta(1-\theta)p}\Vert f\Vert_{L^{p}({\mathbb{R}}^{n})}%
^{p}+\Vert f\Vert_{(L^{p}({\mathbb{R}}^{n}),\dot{H}^{t,p}({\mathbb{R}}%
^{n}))_{\theta,p}}^{p}.
\end{align*}
Therefore, by \eqref{LemmaIntFractGSEq1},
\[
\Vert f\Vert_{(L^{p}({\mathbb{R}}^{n}),H^{t,p}({\mathbb{R}}^{n}))_{\theta,p}%
}^{p}\approx\frac{1}{\theta(1-\theta)p}\Vert f\Vert_{L^{p}({\mathbb{R}}^{n}%
)}^{p}+\Vert f\Vert_{\dot{W}^{\theta t,p}({\mathbb{R}}^{n}),t}^{p}.
\]

\end{proof}

The following result shows that the seminorm $\Vert f\Vert_{\dot{W}%
^{s,p}({\mathbb{R}}^{n}),t}$ is equivalent to the classical Gagliardo seminorm
$\Vert f\Vert_{\dot{W}^{s,p}({\mathbb{R}}^{n})}$, but the constants of
equivalence blow up as $s\rightarrow t^{-}$.

\begin{lemma}
\label{LemmaFractInt} Let $0<s<t<1$ and $1<p<\infty$. Then
\begin{equation}
\frac{1}{(t-s)^{\frac{1}{\max\{p,2\}}}}\Vert f\Vert_{\dot{W}^{s,p}({\mathbb{R}}^{n}%
)}\lesssim\Vert f\Vert_{\dot{W}^{s,p}({\mathbb{R}}^{n}),t}\lesssim\frac
{1}{(t-s)^{\frac{1}{\min\{p,2\}}}}\Vert f\Vert_{\dot{W}^{s,p}({\mathbb{R}}^{n})},
\label{LemmaFractInt1}%
\end{equation}
where the hidden equivalence constants are independent of $s$.
\end{lemma}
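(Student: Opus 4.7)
The plan is to recast the statement in the language of real interpolation, sandwich the Riesz potential space between two Besov spaces, and then extract sharp constants from the Karadzhov--Milman reiteration formula. First, by Lemma \ref{LemmaIntFractGS} applied with interpolation parameter $\theta = s/t \in (0,1)$ one has
\[
\|f\|_{\dot{W}^{s,p}({\mathbb{R}}^{n}), t} \approx \|f\|_{(L^{p}({\mathbb{R}}^{n}), \dot{H}^{t,p}({\mathbb{R}}^{n}))_{s/t, p}},
\]
with constants depending only on $n, p, t$ (in particular uniform in $s$). On the other hand, combining Lemma \ref{TableCoincidences}(ii) with Lemma \ref{LemmaIntBesov} yields $\dot{W}^{s, p}({\mathbb{R}}^{n}) = \dot{B}^{s}_{p, p}({\mathbb{R}}^{n}) = (L^{p}({\mathbb{R}}^{n}), \dot{W}^{1}_{p}({\mathbb{R}}^{n}))_{s, p}$, so the task reduces to comparing these two interpolation norms.

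Second, I would sandwich the Riesz potential space between Besov spaces using Lemma \ref{TableCoincidences}(i) and (iii):
\[
\dot{B}^{t}_{p, \min\{p,2\}}({\mathbb{R}}^{n}) \hookrightarrow \dot{H}^{t,p}({\mathbb{R}}^{n}) = \dot{F}^{t}_{p, 2}({\mathbb{R}}^{n}) \hookrightarrow \dot{B}^{t}_{p, \max\{p,2\}}({\mathbb{R}}^{n}),
\]
with $t$-dependent constants. Since the functor $(L^{p}, \cdot)_{s/t, p}$ is monotone in its second argument, this chain transfers (uniformly in $s$) to
\[
(L^{p}, \dot{B}^{t}_{p, \min\{p,2\}})_{s/t, p} \hookrightarrow (L^{p}, \dot{H}^{t,p})_{s/t, p} \hookrightarrow (L^{p}, \dot{B}^{t}_{p, \max\{p,2\}})_{s/t, p},
\]
so that upper estimates on $\|f\|_{(L^{p}, \dot{H}^{t, p})_{s/t, p}}$ can be obtained via the $\min\{p, 2\}$ Besov space, and lower estimates via the $\max\{p, 2\}$ Besov space.

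Third, Lemma \ref{LemmaIntBesov} identifies $\dot{B}^{t}_{p, q}({\mathbb{R}}^{n}) = (L^{p}({\mathbb{R}}^{n}), \dot{W}^{1}_{p}({\mathbb{R}}^{n}))_{t, q}$, so the spaces $(L^{p}, \dot{B}^{t}_{p, q})_{s/t, p}$ are precisely of the form $(A_{0}, (A_{0}, A_{1})_{s_{1}, q})_{\theta, p}$ with $A_{0} = L^{p},\, A_{1} = \dot{W}^{1}_{p},\, s_{1} = t,\, \theta = s/t$, and $\theta s_{1} = s$. The sharp reiteration in Lemma \ref{LemmaKMX2}(ii) then provides
\[
\|f\|_{(L^{p}, \dot{B}^{t}_{p, q})_{s/t, p}} \lesssim t^{1/p - 1/q}\Big[(1-t)^{-1/q} + (1-s/t)^{-1/\min\{p,q\}}\Big] \|f\|_{\dot{W}^{s, p}}
\]
and
\[
\|f\|_{(L^{p}, \dot{B}^{t}_{p, q})_{s/t, p}} \gtrsim t^{1/p}(1-s/t)^{-1/\max\{p,q\}} \|f\|_{\dot{W}^{s, p}},
\]
uniformly in $s$. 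Taking $q = \min\{p, 2\}$ in the first inequality (so $\min\{p, q\} = \min\{p, 2\}$) and combining with the left embedding from step two yields the upper bound in \eqref{LemmaFractInt1}; taking $q = \max\{p, 2\}$ in the second (so $\max\{p, q\} = \max\{p, 2\}$) and combining with the right embedding yields the lower bound. The prefactors collapse using the algebraic identity $(1-s/t)^{-1/r} = t^{1/r}(t-s)^{-1/r}$, and the residual additive term $(1-t)^{-1/\min\{p,2\}}$ in the upper estimate is absorbed into the $t$-dependent implicit constant since $0 < t-s < t < 1$ forces $(t-s)^{-1/\min\{p,2\}} \geq t^{-1/\min\{p,2\}}$, bounded below uniformly in $s$ by a quantity comparable to $(1-t)^{-1/\min\{p,2\}}$.

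The main obstacle is purely bookkeeping: one must match the correct side of the Besov sandwich with the correct inequality of Lemma \ref{LemmaKMX2}(ii), so that the indices $\min\{p, 2\}$ and $\max\{p, 2\}$ land exactly where the statement requires them, and then verify that the fixed $t$-dependent leftovers do not interfere with the $s$-uniform estimates in \eqref{LemmaFractInt1}.
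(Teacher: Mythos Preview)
Your proposal is correct and follows essentially the same route as the paper: identify the Butzer seminorm with $(L^p,\dot H^{t,p})_{s/t,p}$ via Lemma~\ref{LemmaIntFractGS}, sandwich $\dot H^{t,p}$ between $\dot B^t_{p,\min\{p,2\}}$ and $\dot B^t_{p,\max\{p,2\}}$, rewrite these Besov spaces as $(L^p,\dot W^1_p)_{t,q}$, and then extract the $(1-\theta)^{-1/\min\{p,2\}}$ and $(1-\theta)^{-1/\max\{p,2\}}$ factors from the sharp reiteration Lemma~\ref{LemmaKMX2}(ii). The paper simply absorbs the $s_1^{1/p-1/q}$ and $(1-s_1)^{-1/q}$ factors (with $s_1=t$ fixed) into the implicit constant without comment, whereas you track them explicitly; your final absorption argument is correct, though the phrasing could be tightened to just: $(1-t)^{-1/\min\{p,2\}}$ is a fixed constant and $(t-s)^{-1/\min\{p,2\}}\ge t^{-1/\min\{p,2\}}>0$, so the sum is $\lesssim_t (t-s)^{-1/\min\{p,2\}}$.
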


\begin{proof}
Recall the relationships between $\dot{H}^{t,p}({\mathbb{R}}^{n})$ and
$\dot{B}_{p,q}^{t}({\mathbb{R}}^{n})$ (cf. Lemma \ref{TableCoincidences}),
\[
\dot{B}_{p,\min\{p,2\}}^{t}({\mathbb{R}}^{n})\hookrightarrow\dot{H}%
^{t,p}({\mathbb{R}}^{n})\hookrightarrow\dot{B}_{p,\max\{p,2\}}^{t}%
({\mathbb{R}}^{n}),
\]
therefore, by interpolation, the following embeddings hold uniformly w.r.t.
$\theta\in(0,1)$
\begin{equation}
(L^{p}({\mathbb{R}}^{n}),\dot{B}_{p,\min\{p,2\}}^{t}({\mathbb{R}}%
^{n}))_{\theta,p}\hookrightarrow(L^{p}({\mathbb{R}}^{n}),\dot{H}%
^{t,p}({\mathbb{R}}^{n}))_{\theta,p}\hookrightarrow(L^{p}({\mathbb{R}}%
^{n}),\dot{B}_{p,\max\{p,2\}}^{t}({\mathbb{R}}^{n}))_{\theta,p}%
.\label{LemmaFractInt1Proof}%
\end{equation}
Furthermore, by Lemma \ref{LemmaIntBesov}, we can rewrite
\[
(L^{p}({\mathbb{R}}^{n}),\dot{B}_{p,q}^{t}({\mathbb{R}}^{n}))_{\theta
,p}=(L^{p}({\mathbb{R}}^{n}),(L^{p}({\mathbb{R}}^{n}),\dot{W}_{p}%
^{1}({\mathbb{R}}^{n}))_{t,q})_{\theta,p},\qquad q\in(0,\infty),
\]
with related equivalence constants independent of $\theta$. Combining this and
Lemma \ref{LemmaKMX2}(ii), we obtain the embeddings
\begin{align*}
(1-\theta)^{-1/\min\{p,q\}}(L^{p}({\mathbb{R}}^{n}),\dot{W}_{p}^{1}%
({\mathbb{R}}^{n}))_{\theta t,p} &  \hookrightarrow(L^{p}({\mathbb{R}}%
^{n}),\dot{B}_{p,q}^{t}({\mathbb{R}}^{n}))_{\theta,p}\\
&  \hspace{-4cm}\hookrightarrow(1-\theta)^{-1/\max\{p,q\}}(L^{p}({\mathbb{R}%
}^{n}),\dot{W}_{p}^{1}({\mathbb{R}}^{n}))_{\theta t,p}.%
\end{align*}
In light of Lemma \ref{LemmaIntFractGS} (see also \eqref{IntSob1}), the
previous embeddings turn out to be equivalent to
\begin{equation}
(1-\theta)^{-1/\max\{p,q\}} \|f\|_{\dot{W}^{\theta
t,p}({\mathbb{R}}^{n})} \lesssim \|f\|_{(L^{p}({\mathbb{R}}^{n}),\dot{B}_{p,q}^{t}({\mathbb{R}}%
^{n}))_{\theta,p} } \lesssim (1-\theta)^{-1/\min\{p,q\}} \|f\|_{\dot{W}^{\theta t,p}({\mathbb{R}}^{n})}.\label{LemmaFractInt2}%
\end{equation}
Combining \eqref{LemmaFractInt1Proof} and \eqref{LemmaFractInt2}, we find%
$$
(1-\theta)^{-1/\max\{p,2\}} \|f\|_{\dot{W}^{\theta
t,p}({\mathbb{R}}^{n})} \lesssim \|f\|_{(L^{p}({\mathbb{R}}^{n}),\dot{H}^{t,p}({\mathbb{R}}%
^{n}))_{\theta,p}} \lesssim (1-\theta)^{-1/\min\{p,2\}} \|f\|_{\dot{W}^{\theta t,p}({\mathbb{R}}^{n})}.
$$
Equivalently (cf. \eqref{LemmaIntFractGSEq1})
\[
(1-\theta)^{-1/\max\{p,2\}}\Vert f\Vert_{\dot{W}^{\theta t,p}({\mathbb{R}}%
^{n})}\lesssim\Vert f\Vert_{\dot{W}^{\theta t,p}({\mathbb{R}}^{n}),t}%
\lesssim(1-\theta)^{-1/\min\{p,2\}}\Vert f\Vert_{\dot{W}^{\theta
t,p}({\mathbb{R}}^{n})}
\]
which, after the change of variables $\theta\leftrightarrow\frac{s}{t},$
yields the desired result \eqref{LemmaFractInt1}.
\end{proof}

\section{Proofs of the main results}

\label{SectionProofs}

\subsection{Proof of Proposition \ref{PropQuestion1.11}}

We now recall some results that we need from the theory of Fourier series with
monotonic coefficients.

Suppose that the Fourier series of $f\in L^{1}({\mathbb{T}})$ is given by%
\begin{equation}
f(x)\sim\sum_{\nu=1}^{\infty}c_{\nu}\cos(\nu x),\qquad x\in{\mathbb{T}%
},\label{GenFS}%
\end{equation}
with
\begin{equation}
c_{\nu}\geq c_{\nu+1}\geq\cdots \geq 0\qquad\text{and}\qquad c_{\nu}\rightarrow
0.\label{MonCond}%
\end{equation}
A well-known theorem of Hardy--Littlewood (see, e.g., \cite[p. 129, Vol.
II]{Zygmund}) asserts that $f\in L^{p}({\mathbb{T}})$ if and only if
\[
\sum_{\nu=1}^{\infty}\nu^{p-2}c_{\nu}^{p}<\infty.
\]
Furthermore
\begin{equation}
\Vert f\Vert_{L^{p}({\mathbb{T}})}^{p}\approx\sum_{\nu=1}^{\infty}\nu
^{p-2}c_{\nu}^{p}.\label{HLtheorem}%
\end{equation}
This result has been further extended in \cite{DominguezTikhonov} in order to
deal with other spaces of smooth functions (namely, Besov spaces and Sobolev
spaces), as well as more general classes of Fourier series with monotone-type
coefficients (the so called, general monotone class). In particular, it was
shown in \cite[Theorem 4.25]{DominguezTikhonov} that if $f$ is given by
\eqref{GenFS} with coefficients satisfying \eqref{MonCond} then it belongs to
$H^{t,p}({\mathbb{T}}),\,t\in{\mathbb{R}},\,p\in(1,\infty),$ if and only if
\[
\sum_{\nu=1}^{\infty}\nu^{tp+p-2}c_{\nu}^{p}<\infty.
\]
Moreover,
\begin{equation}
\Vert f\Vert_{H^{t,p}({\mathbb{T}})}^{p}\approx\sum_{\nu=1}^{\infty}%
\nu^{tp+p-2}c_{\nu}^{p}.\label{HLSob}%
\end{equation}

Let $t > 0$ and assume $c_{\nu}=\nu^{-t-1+\frac{1}{p}},\,\nu\in{{\mathbb{N}}}$. By virtue of
\eqref{HLtheorem} and \eqref{HLSob}
\begin{equation}
\Vert f\Vert_{L^{p}({\mathbb{T}})}^{p}\approx\sum_{\nu=1}^{\infty}\nu^{p-2}%
\nu^{-tp-p+1}=\sum_{\nu=1}^{\infty}\nu^{-tp-1}<\infty\label{Counter1*}%
\end{equation}
and
\begin{equation*}
\Vert f\Vert_{H^{t,p}({\mathbb{T}})}^{p}\approx\sum_{\nu=1}^{\infty}%
\nu^{tp+p-2}\nu^{-tp-p+1}=\sum_{\nu=1}^{\infty}\nu^{-1}=\infty,
\end{equation*}
respectively. Consequently, the Fourier series $f$ defined by
\eqref{GenFS} belongs to $L^{p}({\mathbb{T}})$ but not to
$H^{t,p}({\mathbb{T}})$.

To conclude, it remains to show that
\begin{equation}
\Vert f\Vert_{\dot{W}^{s_{k},p}({\mathbb{T}})}\leq C(t-s_{k})^{-\frac{1}{p}%
}\label{ClaimCounterEx}%
\end{equation}
uniformly w.r.t. $k\in{{\mathbb{N}}}$. To proceed, we argue as follows. In
view of \eqref{IntSob1},
\begin{equation}
\Vert f\Vert_{\dot{W}^{s_{k},p}({\mathbb{T}})}\approx\bigg(\int_{0}^{\infty
}u^{-s_{k}p}K(u,f;L^{p}({\mathbb{T}}),\dot{W}_{p}^{1}({\mathbb{T}}))^{p}%
\frac{du}{u}\bigg)^{\frac{1}{p}},\label{Counter1}%
\end{equation}
with hidden constants of equivalence independent of $k$. Next we
estimate $K(u,f;L^{p}({\mathbb{T}}),\dot{W}_{p}^{1}({\mathbb{T}}))$.

For $u>0$, the classical moduli of smoothness of $f\in L^{p}({\mathbb{T}})$ is
defined by
\begin{equation}
\omega(f,u)_{p}:=\sup_{|h|\leq u}\Vert\Delta_{h}f\Vert_{L^{p}({\mathbb{T}}%
)}.\label{DefModuli}%
\end{equation}
Assume further that $f$ satisfies \eqref{GenFS} and \eqref{MonCond}. Then it
is known that $\omega(f,u)_{p}$ can be estimated in terms of the Fourier
coefficients of $f$. Namely,
\begin{equation}
\omega(f,u)_{p}\approx\bigg(\sum_{\nu=1}^{\infty}\min\{1,\nu u\}^{p}\nu
^{p-2}c_{\nu}\bigg)^{\frac{1}{p}},\qquad u\in(0,1);\label{GTGM}%
\end{equation}
(cf. \cite[Theorem 6.2]{Tikhonov} and \cite[Theorem 6.1]{GorbachevTikhonov}
where this is actually done even in the more general context of general
monotone Fourier coefficients.)

Specializing \eqref{GTGM} to $f$ given by \eqref{GenFS}, yields
\[
\omega(f,l^{-1})_{p}\approx l^{-1}\bigg(\sum_{\nu=1}^{l}\nu^{(1-t)p-1}%
\bigg)^{\frac{1}{p}}+\bigg(\sum_{\nu=l}^{\infty}\nu^{-tp-1}\bigg)^{\frac{1}%
{p}}\approx\frac{1}{(t(1-t))^{\frac{1}{p}}}l^{-t}%
\]
for every $l\in{{\mathbb{N}}}$ and $t\in(0,1)$. According to
\eqref{ProofFractGS1*} (with $t=1$) and \eqref{DefModuli}, the previous estimate is equivalent to
\begin{equation}
K(l^{-1},f;L^{p}({\mathbb{T}}),\dot{W}_{p}^{1}({\mathbb{T}}))\approx\frac
{1}{(t(1-t))^{\frac{1}{p}}}l^{-t}.\label{Counter2}%
\end{equation}
Furthermore, we have the trivial estimate
\begin{equation}
K(u,f;L^{p}({\mathbb{T}}),\dot{W}_{p}^{1}({\mathbb{T}}))\leq\Vert
f\Vert_{L^{p}({\mathbb{T}})}.\label{Counter3}%
\end{equation}

Putting together \eqref{Counter1}, \eqref{Counter2}, \eqref{Counter3},
applying monotonicity properties of $K$-functionals, and taking into account
that $s_{k}\uparrow t$, we obtain
\begin{align*}
\Vert f\Vert_{\dot{W}^{s_{k},p}({\mathbb{T}})}^{p} &  \lesssim\int_{0}%
^{1}u^{-s_{k}p}K(u,f;L^{p}({\mathbb{T}}),\dot{W}_{p}^{1}({\mathbb{T}}%
))^{p}\frac{du}{u}+\frac{1}{s_{k}p}\Vert f\Vert_{L^{p}({\mathbb{T}})}^{p}\\
&  \approx\sum_{l=1}^{\infty}l^{s_{k}p-1}K(l^{-1},f;L^{p}({\mathbb{T}}%
),\dot{W}_{p}^{1}({\mathbb{T}}))^{p}+\Vert f\Vert_{L^{p}({\mathbb{T}})}^{p}\\
&  \approx\sum_{l=1}^{\infty}l^{-(t-s_{k})p-1}+\Vert f\Vert_{L^{p}%
({\mathbb{T}})}^{p}\\
&  \approx\frac{1}{t-s_{k}}+\Vert f\Vert_{L^{p}({\mathbb{T}})}^{p}\\
&  \lesssim\frac{1}{t-s_{k}}.
\end{align*}
This concludes the proof of \eqref{ClaimCounterEx}. \qed

\begin{remark}
\label{RemarkCounterexample} Similar ideas can be used to establish the
counterpart of Proposition \ref{PropQuestion1.11} in ${\mathbb{R}}^{n}$. We
will only sketch the proof. Consider the monotone function
\[
F_{0}(u)=\left\{
\begin{array}
[c]{cl}%
u^{-t-n+\frac{n}{p}} & \text{if}\quad u>1,\\
& \\
1 & \text{if}\quad u\in(0,1],
\end{array}
\right.
\]
and define the radial function $f(x)=f_{0}(|x|),\,x\in{\mathbb{R}}^{n},$ where
$f_{0}$ is the inverse Fourier--Hankel transform of $F_{0}$, i.e.,
\[
f_{0}(u)=\frac{2}{\Gamma\big(\frac{n}{2}\big)(2\sqrt{\pi})^{n}}\int%
_{0}^{\infty}F_{0}(\xi)j_{n/2-1}(u\xi)\xi^{n-1}\,d\xi,
\]
where $j_{\alpha}(u)=\Gamma(\alpha+1)(u/2)^{-\alpha}J_{\alpha}(u)$ is the
normalized Bessel function ($j_{\alpha}(0)=1$), $\alpha\geq-1/2$, and
$J_{\alpha}$ is the classical Bessel function of the first kind of order
$\alpha$. Applying the analogue of Hardy--Littlewood theorem for
$L^{p}({\mathbb{R}}^{n}),\,p>\frac{2 n}{n+1},$ (cf. \cite[Theorem
1]{GorbachevLiflyandTikhonov} and \cite[(4.10)]{GorbachevTikhonov}), we have
\[
\Vert f\Vert_{L^{p}({\mathbb{R}}^{n})}^{p}\approx\int_{0}^{\infty}%
u^{n p-n-1}F_{0}(u)^{p}\,du=\int_{0}^{1}u^{n p-n-1}\,du+\int_{1}^{\infty
}u^{-tp-1}\,du<\infty,
\]
which gives the Euclidean setting counterpart of \eqref{Counter1*}. On the
other hand, using the counterpart of \eqref{HLSob} for $H^{t,p}({\mathbb{R}%
}^{n})$, which may be found in \cite[Theorem 4.8]{DominguezTikhonov}, we have
\begin{align*}
\Vert f\Vert_{H^{t,p}({\mathbb{R}}^{n})}^{p} &  \approx\int_{0}^{1}%
u^{n p-n-1}F_{0}^{p}(u)\,du+\int_{1}^{\infty}u^{tp+n p-n-1}F_{0}(u)^{p}\,du\\
&  =\int_{0}^{1}u^{n p-n-1}\,du+\int_{1}^{\infty}\frac{du}{u}=\infty.
\end{align*}
The analog of \eqref{GTGM} for the moduli of smoothness $\omega(f,u)_{p}%
=\sup_{|h|\leq u}\Vert\Delta_{h}f\Vert_{L^{p}({\mathbb{R}}^{n})}$ (cf.
\eqref{DefModuli}) was obtained in \cite[Corollary 4.1 and (7.6)]%
{GorbachevTikhonov}, namely,
\[
\omega(f,u)_{p}^{p}\approx u^{p}\int_{0}^{1/u}\xi^{p+n p-n-1}F_{0}^{p}%
(\xi)\,d\xi+\int_{1/u}^{\infty}\xi^{n p-n-1}F_{0}^{p}(\xi)\,d\xi.
\]
In particular (since $t\in(0,1)$),
\[
\omega(f,u)_{p}\approx u^{t}\qquad\text{for}\qquad u\in(0,1).
\]
Following now line by line the arguments given in the proof of Proposition
\ref{PropQuestion1.11}, one can show that there exists $C>0$ such that
\[
\Vert f\Vert_{\dot{W}^{s_{k},p}({\mathbb{R}}^{n})}\leq C(t-s_{k})^{-\frac
{1}{p}}\qquad\text{for every}\qquad k\in{{\mathbb{N}}}.
\]

\end{remark}

\subsection{Proof of Theorem \ref{Theorem343Fract}}

We give a unified proof in the inhomogeneous setting (i.e., \eqref{Aux61} and
\eqref{Aux62}). It follows from Lemma \ref{LemmaIntFractGS} that
\[
(s(t-s))^{\frac{1}{p}}\Vert f\Vert_{(L^{p}({\mathbb{R}}^{n}),H^{t,p}%
({\mathbb{R}}^{n}))_{\frac{s}{t},p}}\approx\Vert f\Vert_{L^{p}({\mathbb{R}%
}^{n})}+(s(t-s))^{\frac{1}{p}}\Vert f\Vert_{\dot{W}^{s,p}({\mathbb{R}}^{n}%
),t}.
\]
Consequently, both \eqref{Aux61} \& \eqref{Aux62} will be established if we
are able to show that
\begin{equation}
\Vert f\Vert_{F_{p,2}^{\bar{r}}({\mathbb{R}}^{n})}\lesssim(s(t-s))^{\frac
{1}{p}}\Vert f\Vert_{(L^{p}({\mathbb{R}}^{n}),H^{t,p}({\mathbb{R}}%
^{n}))_{\frac{s}{t},p}}.\label{51}%
\end{equation}
To prove this inequality, we invoke the method of retractions (cf.
\cite{Triebel}) which shows that the interpolation space $(L^{p}({\mathbb{R}%
}^{n}),H^{t,p}({\mathbb{R}}^{n}))_{\frac{s}{t},p}$ can be isomorphically
identified (with constants of equivalence independent of $s$) with
$(L^{p}({\mathbb{R}}^{n};\ell_{2}({{\mathbb{N}_0}})),L^{p}({\mathbb{R}}^{n}%
;\ell_{2}^{t}({{\mathbb{N}_0}})))_{\frac{s}{t},p}$. In view of the interpolation
formula given in Lemma \ref{LemmaInterpolationLpVector},
\[
\Vert f\Vert_{(L^{p}({\mathbb{R}}^{n};\ell_{2}({{\mathbb{N}_0}})),L^{p}%
({\mathbb{R}}^{n};\ell_{2}^{t}({{\mathbb{N}_0}})))_{\frac{s}{t},p}}\approx\Vert
f\Vert_{L^{p}({\mathbb{R}}^{n};(\ell_{2}({{\mathbb{N}_0}}),\ell_{2}%
^{t}({{\mathbb{N}_0}}))_{\frac{s}{t},p})},
\]
which reduces\footnote{Here we must take into account that $F_{p,2}^{\bar{r}%
}({\mathbb{R}}^{n})$ can be identified with the vector-valued space
$L^{p}({\mathbb{R}}^{n};\ell_{2}^{\bar{r}}({{\mathbb{N}_0}}))$. } \eqref{51} to
the following sharp embedding at the level of sequence spaces
\begin{equation}
(s(t-s))^{\frac{1}{p}}(\ell_{2}({{\mathbb{N}_0}}),\ell_{2}^{t}({{\mathbb{N}_0}%
}))_{\frac{s}{t},p}\hookrightarrow\ell_{2}^{\bar{r}}({{\mathbb{N}_0}%
}).\label{ClaimSeqSpaces}%
\end{equation}
Since (cf. Lemma \ref{Lemma4})
\begin{equation*}
(\bar{r}(t-\bar{r}))^{\frac{1}{2}}(\ell_{2}({{\mathbb{N}_0}}),\ell_{2}%
^{t}({{\mathbb{N}_0}}))_{\frac{\bar{r}}{t},2}=\ell_{2}^{\bar{r}}({{\mathbb{N}_0}%
}),
\end{equation*}
the embedding \eqref{ClaimSeqSpaces} turns out to be equivalent to
\begin{equation}
(s(t-s))^{\frac{1}{p}}(\ell_{2}({{\mathbb{N}_0}}),\ell_{2}^{t}({{\mathbb{N}_0}%
}))_{\frac{s}{t},p}\hookrightarrow(\bar{r}(t-\bar{r}))^{\frac{1}{2}}(\ell
_{2}({{\mathbb{N}_0}}),\ell_{2}^{t}({{\mathbb{N}_0}}))_{\frac{\bar{r}}{t}%
,2}.\label{ClaimSeqSpacesNew}%
\end{equation}

It remains to show the validity of \eqref{ClaimSeqSpacesNew}. To do this, we
distinguish two possible cases. Suppose first that $p\leq2$, then by Lemma
\ref{LemmaExtrapolSharp}(ii) (note that $\bar{r}<s$),
\begin{equation*}
(s(t-s))^{\frac{1}{p}}(\ell_{2}({{\mathbb{N}_0}}),\ell_{2}^{t}({{\mathbb{N}_0}%
}))_{\frac{s}{t},p}\hookrightarrow(\bar{r}(t-\bar{r}))^{\frac{1}{2}}(\ell
_{2}({{\mathbb{N}_0}}),\ell_{2}^{t}({{\mathbb{N}_0}}))_{\frac{\bar{r}}{t},2}.
\end{equation*}

Suppose now that $p>2$. Applying again Lemma \ref{LemmaExtrapolSharp}(ii), we
have
\begin{equation}
\bigg(\frac{1}{(s-\bar{r})^{1/2-1/p}}+\frac{s^{1/p}(t-s)^{1/p}}{\bar{r}^{1/2}%
}\bigg)(\ell_{2}({{\mathbb{N}_0}}),\ell_{2}^{t}({{\mathbb{N}_0}}))_{\frac{s}{t}%
,p}\hookrightarrow(\ell_{2}({{\mathbb{N}_0}}),\ell_{2}^{t}({{\mathbb{N}_0}%
}))_{\frac{\bar{r}}{t},2}.\label{G2}%
\end{equation}
Furthermore, it is easy to check that under the assumptions (1) \& (2) in Theorem \ref{Theorem343Fract}, the
constant in \eqref{G2} behaves like
\begin{equation*}
\frac{1}{(s-\bar{r})^{1/2-1/p}}+\frac{s^{1/p}(t-s)^{1/p}}{\bar{r}^{1/2}%
}\approx(s(t-s))^{\frac{1}{p}}(\bar{r}(t-\bar{r}))^{-\frac{1}{2}%
},
\end{equation*}
which yields the desired embedding \eqref{ClaimSeqSpacesNew}.

For the homogeneous setting (i.e., \eqref{Aux6} and \eqref{Aux6new}), observe
that it is enough to deal with $r=\bar{r}$, since for $r\in(0,\bar{r})$ we
have
\[
\Vert f\Vert_{\dot{F}_{p,2}^{r}({\mathbb{R}}^{n})}\leq\Vert f\Vert_{\dot
{F}_{p,2}^{0}({\mathbb{R}}^{n})}+\Vert f\Vert_{\dot{F}_{p,2}^{\bar{r}%
}({\mathbb{R}}^{n})}\leq C(\Vert f\Vert_{L^{p}({\mathbb{R}}^{n})}+\Vert
f\Vert_{\dot{F}_{p,2}^{\bar{r}}({\mathbb{R}}^{n})}).
\]
Henceforth, we concentrate on \eqref{Aux6} and \eqref{Aux6new} with $r=\bar
{r}$. The methodology for the inhomogeneous setting given above (i.e.,
\eqref{Aux61} and \eqref{Aux62}) yields (cf. \eqref{51})
\begin{equation}
\Vert f\Vert_{\dot{F}_{p,2}^{\bar{r}}({\mathbb{R}}^{n})}\lesssim
(s(t-s))^{\frac{1}{p}}\Vert f\Vert_{(L^{p}({\mathbb{R}}^{n}),H^{t,p}%
({\mathbb{R}}^{n}))_{\frac{s}{t},p}}.\label{FrHom}%
\end{equation}
Indeed, in this case the analog of \eqref{ClaimSeqSpaces} reads
\begin{equation}
(s(t-s))^{\frac{1}{p}}(\ell_{2}({{\mathbb{Z}}}),\ell_{2}^{t}({{\mathbb{Z}}%
}))_{\frac{s}{t},p}\hookrightarrow\ell_{2}^{\bar{r}}({{\mathbb{Z}}%
})\label{EmbNonOrder}%
\end{equation}
where
\[
\bar{r}=\left\{
\begin{array}
[c]{cl}%
t-\Lambda(t-s), & \text{if}\quad s\rightarrow t^{-},\\
& \\
\frac{1}{\Lambda}s, & \text{if}\quad s\rightarrow0^{+}.
\end{array}
\right.
\]
Here the couple $(\ell_{2}({{\mathbb{Z}}}),\ell_{2}^{t}({{\mathbb{Z}}}))$ is
not ordered and we cannot invoke Lemma \ref{LemmaExtrapolSharp}(ii) directly
to deal with \eqref{EmbNonOrder}. However, this difficulty can be overcome as
follows. We write ${{\mathbb{Z}}}={{\mathbb{N}}}_{0}\cup{{\mathbb{N}}}_{-}$
where ${{\mathbb{N}}}_{-}:=\{-j:j\in{{\mathbb{N}}}\}$. Since $\ell_{2}%
^{t}({{\mathbb{N}}}_{0})\hookrightarrow\ell_{2}({{\mathbb{N}}}_{0})$ and
$\ell_{2}({{\mathbb{N}}}_{-})\hookrightarrow\ell_{2}^{t}({{\mathbb{N}}}_{-})$,
it follows from \eqref{ClaimSeqSpaces} that
\begin{equation}
(s(t-s))^{\frac{1}{p}}(\ell_{2}({{\mathbb{N}}}_{0}),\ell_{2}^{t}({{\mathbb{N}%
}}_{0}))_{\frac{s}{t},p}\hookrightarrow\ell_{2}^{\bar{r}}({{\mathbb{N}}}%
_{0})\label{Decomp1}%
\end{equation}
and, by \eqref{OrderedCouplesCommute}, and a simple change of variables,
\begin{equation}
(s(t-s))^{\frac{1}{p}}(\ell_{2}({{\mathbb{N}}}_{-}),\ell_{2}^{t}({{\mathbb{N}%
}}_{-}))_{\frac{s}{t},p}\hookrightarrow\ell_{2}^{\bar{r}}({{\mathbb{N}}}%
_{-}).\label{Decomp2}%
\end{equation}
Let $\xi=(\xi)_{j\in{{\mathbb{Z}}}}$ and consider the related sequences
$\xi^{1}=(\xi_{j})_{j\in{{\mathbb{N}}}_{0}}$ and $\xi^{2}=(\xi_{j}%
)_{j\in{{\mathbb{N}}}_{-}}$. By triangle inequality and \eqref{Decomp1},
\eqref{Decomp2}, we derive
\begin{align*}
\Vert\xi\Vert_{\ell_{2}^{\bar{r}}({{\mathbb{Z}}})} &  \leq\Vert\xi^{1}%
\Vert_{\ell_{2}^{\bar{r}}({{\mathbb{N}}}_{0})}+\Vert\xi^{2}\Vert_{\ell
_{2}^{\bar{r}}({{\mathbb{N}}}_{-})}\\
&  \leq C(s(t-s))^{\frac{1}{p}}(\Vert\xi^{1}\Vert_{(\ell_{2}({{\mathbb{N}}%
}_{0}),\ell_{2}^{t}({{\mathbb{N}}}_{0}))_{\frac{s}{t},p}}+\Vert\xi^{2}%
\Vert_{(\ell_{2}({{\mathbb{N}}}_{-}),\ell_{2}^{t}({{\mathbb{N}}}_{-}%
))_{\frac{s}{t},p}})\\
&  \leq2C(s(t-s))^{\frac{1}{p}}\Vert\xi\Vert_{(\ell_{2}({{\mathbb{Z}}}%
),\ell_{2}^{t}({{\mathbb{Z}}}))_{\frac{s}{t},p}},
\end{align*}
where the last step follows immediately from the interpolation property
applied to the canonical projections $\xi\mapsto\xi^{i},\,i=1,2$. The proof of
\eqref{EmbNonOrder} is complete establishing that \eqref{FrHom} holds. Now the
rest of the proof follows line by line the arguments provided for the
inhomogeneous case. \qed

\subsection{Proof of Theorem \ref{ThemQuestion1.11}}

Without loss of generality, we may assume that $t-s_{k}\leq\frac{t}{4}$.
According to \eqref{Aux6} there is $C=C(n,p,t)>0$ such that, for every $r\leq
t-2(t-s_{k})$,
\[
\Vert f_{k}\Vert_{\dot{F}_{p,2}^{r}({\mathbb{R}}^{n})}\leq C\Big(\Vert
f_{k}\Vert_{L^{p}({\mathbb{R}}^{n})}+(t-s_{k})^{\frac{1}{p}}\Vert f_{k}%
\Vert_{\dot{W}^{s_{k},p}({\mathbb{R}}^{n}),t}\Big)\leq C\Lambda.
\]
Taking limits as $k\rightarrow\infty,$ and noting that $\lim_{k\rightarrow
\infty}t-2(t-s_{k})=t$, the previous estimate yields
\[
\limsup_{k\rightarrow\infty}\Big(\Vert f_{k}\Vert_{L^{p}({\mathbb{R}}^{n}%
)}+\Vert f_{k}\Vert_{\dot{F}_{p,2}^{r}({\mathbb{R}}^{n})}\Big)\lesssim
\Lambda\qquad\text{for all}\qquad r\in(0,t).
\]
Therefore, one can apply \cite[Lemma 2.6]{Braz} to derive $f\in H^{t,p}%
({\mathbb{R}}^{n})$ and
\[
\Vert(-\Delta)^{\frac{t}{2}}f\Vert_{L^{p}({\mathbb{R}}^{n})}\lesssim\Lambda.
\]

\qed

\begin{remark}
It may be instructive to revisit the counterexample provided in Proposition
\ref{PropQuestion1.11} to show the important role played by the seminorms
$\Vert\cdot\Vert_{\dot{W}^{s,p}({\mathbb{R}}^{n}),t}$ in Theorem
\ref{ThemQuestion1.11}. Consider the Fourier series $f$ defined by
\eqref{FourierSeries}. It was shown in Proposition \ref{PropQuestion1.11} that
the condition
\[
\sup_{k\in{{\mathbb{N}}}}\,(t-s_{k})^{\frac{1}{p}}\Vert f\Vert_{\dot{W}%
^{s_{k},p}({\mathbb{T}})}<\infty
\]
is satisfied but $f\not \in H^{t,p}({\mathbb{T}})$. Next we check that this
example does not contradict Theorem \ref{ThemQuestion1.11} since
\begin{equation}
\limsup_{k\rightarrow\infty}\,(t-s_{k})^{\frac{1}{p}}\Vert f\Vert_{\dot
{W}^{s_{k},p}({\mathbb{T}}),t}=\infty. \label{ClaimCounter}%
\end{equation}
Indeed, applying \cite[Theorem 6.2]{Tikhonov} one can estimate, for every
$l\in{{\mathbb{N}}}$,
\begin{align*}
K(l^{-t},f;L^{p}({\mathbb{T}}),\dot{H}^{t,p}({\mathbb{T}}))  &  \approx
l^{-t}\bigg(\sum_{\nu=1}^{l}\nu^{tp+p-2}\nu^{-tp-p+1}\bigg)^{1/p}%
+\bigg(\sum_{\nu=l}^{\infty}\nu^{p-2}\nu^{-tp-p+1}\bigg)^{1/p}\\
&  \approx l^{-t}(1+\log l)^{1/p}%
\end{align*}
and thus, by Lemma \ref{LemmaIntFractGS},
\begin{align*}
\Vert f\Vert_{\dot{W}^{s_{k},p}({\mathbb{T}}),t}^{p}  &  \approx\Vert
f\Vert_{(L^{p}({\mathbb{T}}),\dot{H}^{t,p}({\mathbb{T}}))_{\frac{s_{k}}{t},p}%
}^{p}\\
&  \gtrsim\sum_{l=0}^{\infty}2^{ls_{k}p}K(2^{-lt},f;L^{p}({\mathbb{T}}%
),\dot{H}^{t,p}({\mathbb{T}}))^{p}\\
&  \approx\sum_{l=0}^{\infty}2^{-l(t-s_{k})p}(1+l)\\
&  \gtrsim(t-s_{k})^{-1}\sum_{l=\left\lfloor {\frac{1}{t-s_{k}}}\right\rfloor
}^{\infty}2^{-l(t-s_{k})p}\\
&  \approx\frac{(t-s_{k})^{-1}}{1-2^{-(t-s_{k})p}}\approx(t-s_{k})^{-2}.
\end{align*}
Hence
\[
(t-s_{k})^{1/p}\Vert f\Vert_{\dot{W}^{s_{k},p}({\mathbb{T}}),t}\gtrsim
(t-s_{k})^{-1/p}%
\]
which yields \eqref{ClaimCounter} (since $s_{k}\uparrow t$).
\end{remark}

\subsection{Proof of Theorem \ref{TheoremFBBM}}

Let $(A_{0},A_{1})=(L^{p}({\mathbb{R}}^{n}),\dot{H}^{t,p}({\mathbb{R}}^{n}))$.
Recall that for this couple for $f\in$ $H
^{t,p}({\mathbb{R}}^{n}),$ (cf. \cite[Corollary 10, page 75]{Wilmes},
\cite[Section 1.3, Property 1]{Kolomoitsev})
\[
\sup_{t>0}\frac{K(t,f;L^{p}({\mathbb{R}}^{n}),\dot{H}^{t,p}({\mathbb{R}}%
^{n}))}{t}\approx\Vert(-\Delta)^{\frac{t}{2}}f\Vert_{L^{p}({\mathbb{R}}^{n})}%
\]
and
\[
\sup_{t>0}K(t,f;L^{p}({\mathbb{R}}^{n}),\dot{H}^{t,p}({\mathbb{R}}%
^{n}))\approx\Vert f\Vert_{L^{p}({\mathbb{R}}^{n})}.
\]
Therefore \eqref{FractGSAssertion} (respectively, \eqref{FractGSAssertionMS})
is an immediate consequence of Lemma \ref{LemmaIntFractGS} and
\eqref{ProofFractGS1} (respectively, \eqref{ProofFractGS1New}).

Concerning the inhomogeneous setting (i.e., \eqref{FractGSAssertionIn} and \eqref{FractGSAssertionMSIn}), the proofs are immediate consequences of their homogeneous counterparts (i.e., \eqref{FractGSAssertion} and \eqref{FractGSAssertionMS}) applied to \eqref{DefFractGSIn}.

\qed

\subsection{Proof of Theorem \ref{ThmSobolevBSY}}

Assume $0<r<t<1$. Applying Lemma \ref{LemmaInterpolation12} to the
Triebel--Lizorkin pair $(A_{0},A_{1})=(\dot{F}_{p,2}^{r}({\mathbb{R}}%
^{n}),\dot{F}_{p,2}^{t}({\mathbb{R}}^{n}))$, one has
\[
\Vert f\Vert_{(\dot{F}_{p,2}^{r}({\mathbb{R}}^{n}),\dot{F}_{p,2}%
^{t}({\mathbb{R}}^{n}))_{\theta,p}}\lesssim\frac{1}{\theta^{1/p}}\Vert
f\Vert_{\dot{F}_{p,2}^{r}({\mathbb{R}}^{n})}+\frac{1}{(1-\theta)^{1/p}}\Vert
f\Vert_{\dot{F}_{p,2}^{t}({\mathbb{R}}^{n})}%
\]
for every $\theta\in(0,1)$. Since (cf. Lemma \ref{TableCoincidences}(i))
\[
\dot{F}_{p,2}^{r}({\mathbb{R}}^{n})\hookrightarrow\dot{B}_{p,\max\{p,2\}}%
^{r}({\mathbb{R}}^{n})\qquad\text{and}\qquad\dot{F}_{p,2}^{t}({\mathbb{R}}%
^{n})\hookrightarrow\dot{B}_{p,\max\{p,2\}}^{t}({\mathbb{R}}^{n}),
\]
the previous inequality implies
\begin{equation}
\Vert f\Vert_{(\dot{B}_{p,\max\{p,2\}}^{r}({\mathbb{R}}^{n}),\dot{B}%
_{p,\max\{p,2\}}^{t}({\mathbb{R}}^{n}))_{\theta,p}}\lesssim\frac{1}%
{\theta^{1/p}}\Vert f\Vert_{\dot{F}_{p,2}^{r}({\mathbb{R}}^{n})}+\frac
{1}{(1-\theta)^{1/p}}\Vert f\Vert_{\dot{F}_{p,2}^{t}({\mathbb{R}}^{n})}.
\label{ProofThmSobolevBSY1}%
\end{equation}
Next we compute the interpolation norm given in the left-hand side of
\eqref{ProofThmSobolevBSY1}. Indeed, since $r,t\in(0,1)$, we can invoke Lemma
\ref{LemmaIntBesov} to state
\[
\dot{B}_{p,\max\{p,2\}}^{r}({\mathbb{R}}^{n})=(L^{p}({\mathbb{R}}^{n}),\dot
{W}_{p}^{1}({\mathbb{R}}^{n}))_{r,\max\{p,2\}}%
\]
and
\[
\dot{B}_{p,\max\{p,2\}}^{t}({\mathbb{R}}^{n})=(L^{p}({\mathbb{R}}^{n}),\dot
{W}_{p}^{1}({\mathbb{R}}^{n}))_{t,\max\{p,2\}}.
\]
Therefore, in light of Lemmas \ref{LemmaKMX2}(i) and \eqref{IntSob1}, we get,
uniformly w.r.t. $\theta$,
\begin{align*}
\Vert f\Vert_{(\dot{B}_{p,\max\{p,2\}}^{r}({\mathbb{R}}^{n}),\dot{B}%
_{p,\max\{p,2\}}^{t}({\mathbb{R}}^{n}))_{\theta,p}}  &  \approx\Vert
f\Vert_{((L^{p}({\mathbb{R}}^{n}),\dot{W}_{p}^{1}({\mathbb{R}}^{n}%
))_{r,\max\{p,2\}},(L^{p}({\mathbb{R}}^{n}),\dot{W}_{p}^{1}({\mathbb{R}}%
^{n}))_{t,\max\{p,2\}})_{\theta,p}}\\
&  \hspace{-4cm}\gtrsim(\theta(1-\theta))^{-1/\max\{p,2\}}\Vert f\Vert
_{(L^{p}({\mathbb{R}}^{n}),\dot{W}_{p}^{1}({\mathbb{R}}^{n}))_{(1-\theta
)r+\theta t,p}}\\
&  \hspace{-4cm}\approx(\theta(1-\theta))^{-1/\max\{p,2\}}\Vert f\Vert
_{\dot{W}^{(1-\theta)r+\theta t,p}({\mathbb{R}}^{n})}.
\end{align*}
Combining this and \eqref{ProofThmSobolevBSY1}, we arrive at
\begin{equation}
\frac{1}{(\theta(1-\theta))^{1/\max\{p,2\}}}\Vert f\Vert_{\dot{W}%
^{(1-\theta)r+\theta t,p}({\mathbb{R}}^{n})}\lesssim\frac{1}{\theta^{1/p}%
}\Vert f\Vert_{\dot{F}_{p,2}^{r}({\mathbb{R}}^{n})}+\frac{1}{(1-\theta)^{1/p}%
}\Vert f\Vert_{\dot{F}_{p,2}^{t}({\mathbb{R}}^{n})}.
\label{ProofThmSobolevBSY2}%
\end{equation}
Given $s\in(r,t)$, we choose $\theta\in(0,1)$ such that $s=(1-\theta)r+\theta
t$ and make the corresponding change of variables in
\eqref{ProofThmSobolevBSY2}, i.e.,
\[
\frac{1}{((s-r)(t-s))^{1/\max\{p,2\}}}\Vert f\Vert_{\dot{W}^{s,p}({\mathbb{R}%
}^{n})}\lesssim\frac{1}{(s-r)^{1/p}}\Vert f\Vert_{\dot{F}_{p,2}^{r}%
({\mathbb{R}}^{n})}+\frac{1}{(t-s)^{1/p}}\Vert f\Vert_{\dot{F}_{p,2}%
^{t}({\mathbb{R}}^{n})}.
\]

The proofs in the limiting cases $r=0$ and $t=1$ (i.e., \eqref{WLD} and \eqref{WLD2} respectively) are easier and we
omit further details. \qed

\subsection{Proof of Theorem \ref{ConjectureBSY}}

Since $H^{\alpha,p}({\mathbb{R}}^{n})\hookrightarrow L^{p}({\mathbb{R}}^{n})$,
it follows from Lemma \ref{LemmaExtrapolSharp}(ii) that
\[
(s(\alpha-s))^{1/p}\Vert f\Vert_{(L^{p}({\mathbb{R}}^{n}),H^{\alpha
,p}({\mathbb{R}}^{n}))_{\frac{s}{\alpha},p}}\lesssim(t(\alpha-t))^{1/p}\Vert
f\Vert_{(L^{p}({\mathbb{R}}^{n}),H^{\alpha,p}({\mathbb{R}}^{n}))_{\frac
{t}{\alpha},p}}.
\]
Combining with \eqref{LemmaIntFractGSEq2} yields
\begin{align*}
\Vert f\Vert_{L^{p}({\mathbb{R}}^{n})}+(s(\alpha-s))^{1/p}\Vert f\Vert
_{\dot{W}^{s,p}({\mathbb{R}}^{n}),\alpha} &  \approx (s(1-s))^{1/p}\Vert
f\Vert_{(L^{p}({\mathbb{R}}^{n}),H^{\alpha,p}({\mathbb{R}}^{n}))_{\frac
{s}{\alpha},p}}\\
&  \hspace{-5cm}\lesssim(t(1-t))^{1/p}\Vert f\Vert_{(L^{p}({\mathbb{R}}%
^{n}),H^{\alpha,p}({\mathbb{R}}^{n}))_{\frac{t}{\alpha},p}}\\
&  \hspace{-5cm}\approx\Vert f\Vert_{L^{p}({\mathbb{R}}^{n})}+(t(\alpha
-t))^{1/p}\Vert f\Vert_{\dot{W}^{t,p}({\mathbb{R}}^{n}),\alpha}.
\end{align*}
The desired result now follows since $s(\alpha-s)\approx\min\{s,\alpha-s\}$. \qed

\section*{Appendix A: Sharp versions of fractional Sobolev inequalities}

For $p \in (0, \infty)$ and $q \in (0, \infty]$, the \emph{Lorentz space} $L^{p, q}(\R^n)$ is formed by all measurable functions $f$ defined on $\R^n$ such that
$$
	\|f\|_{L^{p, q}(\R^n)} := \bigg(\int_0^\infty \big(u^{\frac{1}{p}} f^*(u) \big)^q \frac{du}{u} \bigg)^{\frac{1}{q}} < \infty
$$ 
(with the usual modification if $q=\infty$). 
As usual, $f^*$ denotes the non-increasing rearrangement of $f$. By $f^{**}$ we denote the maximal function given by $f^{**}(u) := \frac{1}{u} \int_0^{u} f^*(v) \, dv$.  We refer to \cite{BennettSharpley} and \cite{BerghLofstrom} for detailed accounts on Lorentz spaces.

Let $0 < s < 1, \, 1 \leq p < \frac{n}{s}$ and $p^* = \frac{n p}{n- s p}$. The Bourgain--Brezis--Mironescu--Maz'ya--Shasposhnikova formula (cf. \cite{BBM1} and \cite{Mazya}) claims that there exists $C= C(n, p) > 0$ such that
\begin{equation}\label{BBMnew}
	\|f\|^p_{L^{p^{*}, p}(\R^n)} \leq C \frac{s (1-s)}{(n- s p)^{p}} \|f\|_{\dot{W}^{s, p}(\R^n)}^p;
\end{equation}
see also \cite{KaradzhovMilmanXiao}. 
This inequality can be considered as a sharp version of the classical Sobolev embedding
\begin{equation}\label{Sob}
	\dot{W}^1_p(\R^n) \hookrightarrow L^{\frac{n p}{n- p}, p}(\R^n), \qquad 1 \leq p < n.
\end{equation}
Indeed, \eqref{Sob} follows from \eqref{BBMnew} by taking limits as $s \to 1^{-}$ (cf. \eqref{BBM}). 

Let $t \in (0, 1)$ be fixed. Note that the formula \eqref{BBMnew} does not provide any insight if $s \to t^{-}$. So the natural question here is: can we obtain an analogue of \eqref{BBMnew} for fractional smoothness $t$? Or equivalently, what is the fractional counterpart of \eqref{BBMnew} related to the classical embedding
\begin{equation}\label{HLSFract}
	\dot{H}^{t, p}(\R^n) \hookrightarrow L^{\frac{n p}{n- t p}, p}(\R^n), \qquad 1 < p < \frac{n}{t}?
\end{equation}
The answer is given again in terms of the Butzer seminorms $\|\cdot\|_{\dot{W}^{s, p}(\R^n), t}$ (cf. \eqref{DefFractGS}). 

\begin{theorem}\label{ThmBBMt}
	Let $0 < s < t \leq 1, 1 < p < \frac{n}{s}$ and $p^{*} = \frac{n p}{n-s p}$. Assume $f \in W^{s, p}(\R^n)$. Then there exists $C = C(n, p, t) > 0$ such that
	\begin{equation}\label{BBMFract}
	\|f\|^p_{L^{p^{*}, p}(\R^n)} \leq C \frac{s (t-s)}{(n- s p)^{p}} \|f\|_{\dot{W}^{s, p}(\R^n), t}^p.
	\end{equation}
	In particular, if $t=1$ then one recovers \eqref{BBMnew}. 
\end{theorem}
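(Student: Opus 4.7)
My first step is to apply Lemma \ref{LemmaIntFractGS}, which identifies the Butzer seminorm on the right-hand side of \eqref{BBMFract} with the real interpolation norm for the pair $(L^{p}(\R^n), \dot{H}^{t,p}(\R^n))$:
\[
\|f\|_{\dot{W}^{s,p}(\R^n),\,t}\approx\|f\|_{(L^{p}(\R^n),\dot{H}^{t,p}(\R^n))_{s/t,\,p}},
\]
with equivalence constants depending on $n,p,t$ but independent of $s\in(0,t)$. Under this identification the prefactor $(s(t-s))^{1/p}$ appearing in \eqref{BBMFract} matches, up to factors involving $n,p,t$, the universal extrapolation normalization $(\theta(1-\theta)p)^{1/p}$ evaluated at $\theta=s/t$. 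The theorem is thus reduced to the sharp Sobolev-type embedding
\[
\|f\|_{L^{p^{*},p}(\R^n)}^{p}\lesssim\frac{s(t-s)}{(n-sp)^{p}}\,\|f\|_{(L^{p},\dot{H}^{t,p})_{s/t,p}}^{p}
\]
with constant depending only on $n,p,t$.

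The core computation lives in the non-critical regime $tp<n$. There I would proceed in two steps: first, combine the classical sharp fractional Sobolev embedding $\dot{H}^{t,p}(\R^n)\hookrightarrow L^{np/(n-tp),p}(\R^n)$ with the trivial $L^{p}\hookrightarrow L^{p}$ and invoke the interpolation property to obtain
\[
(L^{p},\dot{H}^{t,p})_{s/t,p}\hookrightarrow(L^{p},L^{np/(n-tp),p})_{s/t,p};
\]
second, apply Holmstedt's formula for the $K$-functional of this Lorentz pair (with Holmstedt exponent $\sigma=1/(1/p-1/p_{1})=n/t$), which via an explicit Fubini calculation produces the sharp identity
\[
\|f\|_{L^{p^{*},p}}^{p}\approx\frac{p\,s(t-s)}{t^{2}}\,\|f\|_{(L^{p},L^{np/(n-tp),p})_{s/t,p}}^{p}.
\]
In this regime $n-sp\geq n-tp>0$, so the factor $(n-sp)^{-p}$ is uniformly bounded and the chain produces the desired estimate after absorbing constants that depend only on $n,p,t$.

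The critical regime $tp\geq n$, in which the endpoint embedding for $\dot{H}^{t,p}$ is no longer available, is the principal technical obstacle. My plan here is to introduce an auxiliary smoothness $\tau\in(s,n/p)$, for instance $\tau:=(s+n/p)/2$, so that $\tau p<n$ and $n-\tau p=(n-sp)/2$. Running the non-critical argument with $\tau$ in place of $t$ yields an estimate involving the Butzer seminorm of order $\tau$, which must then be transferred to the Butzer seminorm of order $t$ via the equivalences of Lemma \ref{LemmaFractInt} together with the reiteration machinery of Section \ref{SectionMethod}. The delicate point is tracking every constant uniformly in $s$: the Sobolev endpoint contributes a blow-up $(n-\tau p)^{-s/\tau}\asymp(n-sp)^{-1}$ (up to bounded multiplicative factors), and this must combine with the $s(\tau-s)$ prefactor and the order-conversion constants so as to reproduce the target power $(n-sp)^{-p}$. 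Once this bookkeeping is carried through, specializing to $t=1$ recovers \eqref{BBMnew} by virtue of \eqref{FractModuInt}.
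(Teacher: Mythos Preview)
Your treatment of the subcritical regime $tp<n$ coincides with the paper's Case~1: identify the Butzer seminorm with the real interpolation norm via Lemma~\ref{LemmaIntFractGS}, interpolate the endpoint embedding $\dot H^{t,p}\hookrightarrow L^{np/(n-tp),p}$ against $L^p\hookrightarrow L^p$, and compute the resulting Lorentz interpolation norm sharply (the paper does this by reiterating through $(L^p,L^\infty)_{sp/n,p}$ rather than Holmstedt, but the two are equivalent). In that range $(n-sp)^{-p}$ is bounded by a constant depending only on $n,p,t$, so the argument closes exactly as you outline.

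The critical regime $tp\ge n$ is where your proposal and the paper diverge, and where your sketch has a genuine gap. The paper does \emph{not} introduce a variable auxiliary order $\tau=\tau(s)$. Instead it splits further: for $t>n/p$ (or $t=n/p$ with $s$ bounded away from $t$) it replaces the failing Sobolev endpoint by the critical embedding $\dot H^{n/p,p}\hookrightarrow\mathrm{BMO}$, computes $(L^p,\mathrm{BMO})_{sp/n,p}$ via the Jawerth--Torchinsky $K$-functional and the pointwise bound $f^{**}(u)\lesssim\int_u^\infty f^{\#*}(v)\,v^{-1}dv$, and extracts the factor $(n-sp)^{-1}$ from a single Hardy inequality. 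For $t=n/p$ with $s\to t^-$ it fixes an auxiliary $t_0\in(0,t)$ \emph{independent of $s$}, lifts by $(-\Delta)^{t_0/2}$ to reduce to the subcritical Case~1 for the pair $(L^p,\dot H^{t-t_0,p})$, and then descends via the Riesz potential $I_{t_0}$, whose operator norm between the relevant Lorentz spaces is tracked explicitly as $\lesssim(n-sp)^{-1}$.

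Your plan---run Case~1 with $\tau=(s+n/p)/2$ and then convert $\|\cdot\|_{\dot W^{s,p},\tau}$ to $\|\cdot\|_{\dot W^{s,p},t}$ via Lemma~\ref{LemmaFractInt}---fails at the conversion step. Lemma~\ref{LemmaFractInt} compares each Butzer seminorm to the classical Gagliardo seminorm with \emph{mismatched} exponents $1/\min\{p,2\}$ and $1/\max\{p,2\}$; chaining two applications gives
\[
\|f\|_{\dot W^{s,p},\tau}\lesssim(\tau-s)^{-1/\min\{p,2\}}(t-s)^{1/\max\{p,2\}}\|f\|_{\dot W^{s,p},t},
\]
and since $\tau-s\approx n-sp$, for $p>2$ this introduces an extra divergent factor $(n-sp)^{1/p-1/2}$, spoiling the target power $(n-sp)^{-p}$. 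In addition, the constants in Lemma~\ref{LemmaIntFractGS} and in the Hardy--Littlewood--Sobolev embedding depend on the order parameter---here $\tau$, which varies with $s$---and you do not verify they remain bounded. The paper's BMO/Riesz-potential route sidesteps any order conversion: the auxiliary parameter ($n/p$ or $t_0$) is fixed, and all the $s$-dependent blow-up is isolated in one explicit Hardy step.
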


\begin{remark}
 Let $t > 0$ and $1 < p < \frac{n}{t}$. Taking limits as $s \to t^{-}$ in \eqref{BBMFract} and applying Theorem \ref{TheoremFBBM}, we recover \eqref{HLSFract}. 
\end{remark}

\begin{proof}[Proof of Theorem \ref{ThmBBMt}]
	\textsc{Case 1}: Assume $t < \frac{n}{p}$ and $s \in (0, t)$. Under these assumptions, the Hardy--Littlewood--Sobolev theorem (cf. \cite[p. 139]{Oneil}) asserts 
	$$
		\dot{H}^{t, p}(\R^n) \hookrightarrow L^{\frac{n p}{n- t p}, p}(\R^n).
	$$
	If we interpolate this embedding with the trivial one $L^p(\R^n) \hookrightarrow L^p(\R^n)$, we arrive at
	\begin{equation}\label{ForClaim-1}
	\|f\|_{ (L^p(\R^n), L^{\frac{n p}{n- t p}, p}(\R^n))_{\frac{s}{t}, p}} \lesssim	\|f\|_{(L^p(\R^n), \dot{H}^{t, p}(\R^n))_{\frac{s}{t}, p}}  
	\end{equation}
	uniformly with respect to $s \in (0, t)$. In light of Lemma \ref{LemmaIntFractGS},
	\begin{equation}\label{ForClaim0}
		\|f\|_{(L^p(\R^n), \dot{H}^{t, p}(\R^n))_{\frac{s}{t}, p}}  \approx \|f\|_{\dot{W}^{s, p}(\R^n), t}.
	\end{equation}
	Next we show that
	\begin{equation}\label{ForClaim}
		\|f\|_{L^{p^{*}, p}(\R^n)} \approx (s (t-s))^{\frac{1}{p}} \|f\|_{ (L^p(\R^n), L^{\frac{n p}{n- t p}, p}(\R^n))_{\frac{s}{t}, p}}.
	\end{equation}
	Indeed, since $L^{\frac{n p}{n- t p}, p}(\R^n) = (L^p(\R^n), L^\infty(\R^n))_{\frac{t p}{n}, p}$ (see, e.g., \cite[Theorem 5.2.1, p. 109]{BerghLofstrom}), we can write
	$$
		(L^p(\R^n), L^{\frac{n p}{n- t p}, p}(\R^n))_{\frac{s}{t}, p} = (L^p(\R^n), (L^p(\R^n), L^\infty(\R^n))_{\frac{t p}{n}, p})_{\frac{s}{t}, p}
	$$
	with hidden constants of equivalence independent of $s$. Invoking now the sharp reiteration formula given in Lemma \ref{LemmaKMX2}(ii), we get
	\begin{equation}\label{For1}
			(L^p(\R^n), L^{\frac{n p}{n- t p}, p}(\R^n))_{\frac{s}{t}, p} = (t-s)^{-\frac{1}{p}} (L^p(\R^n), L^\infty(\R^n))_{\frac{s p}{n}, p}.
	\end{equation}
	Furthermore, using the well-known characterization of the $K$-functional (cf. \cite[Theorem 5.2.1, p. 109]{BerghLofstrom})
	$$
	K(u, f; L^p(\R^n), L^\infty(\R^n)) \approx \bigg(\int_0^{u^p} (f^*(v))^p \, dv \bigg)^{\frac{1}{p}}
	$$
	and applying Fubini's theorem, we obtain
	\begin{align*}
		\|f\|_{(L^p(\R^n), L^\infty(\R^n))_{\frac{s p}{n}, p}}^p & = \int_0^\infty ( u^{-\frac{s p}{n}} K(u, f; L^p(\R^n), L^\infty(\R^n)))^p \frac{du}{u} \\
		&\approx  \int_0^\infty u^{-\frac{s p}{n}} \int_0^{u} (f^*(v))^p \, dv \frac{du}{u} \\
		& =  \int_0^\infty (f^*(v))^p  \int_v^\infty u^{-\frac{s p}{n}} \frac{du}{u} dv \\
		& = \frac{n}{s p} \|f\|_{L^{p^*, p}(\R^n)}^p.
	\end{align*} 
	Inserting this into \eqref{For1}, we arrive at the desired estimate \eqref{ForClaim}. 
	
	Combining now \eqref{ForClaim-1}--\eqref{ForClaim} we achieve \eqref{BBMFract}. 
	\\
	
	\textsc{Case 2:} Assume that either\footnote{In particular, this argument with $t= \frac{n}{p} \in \N$ completes the proof of \cite[Theorem 8]{KaradzhovMilmanXiao}.}
	$$t > \frac{n}{p} \quad \text{and} \quad s \in \Big(0, \frac{n}{p} \Big)$$
	or
	$$
		t= \frac{n}{p} \quad \text{and} \quad s \to 0^{+}.
	$$
	
%
	
	We will make use of the well-known embedding (see, e.g., \cite[p. 164]{Stein})
	$$	
		\dot{H}^{\frac{n}{p}, p} (\R^n) \hookrightarrow \text{BMO}(\R^n)
	$$
	where $\text{BMO}(\R^n)$ is the space of bounded mean oscillation functions of John--Nirenberg \cite{JN} endowed with the seminorm
	$$
		\|f\|_{\text{BMO}(\R^n)} := \|f^{\#}\|_{L^\infty(\R^n)}
	$$
	and $f^{\#}$ is the sharp maximal function of $f \in L^1_{\text{loc}}(\R^n)$, i.e.,
	$$
		f^{\#} (x) := \sup_{Q \ni x} \frac{1}{|Q|} \int_Q |f(x)-f_Q| \, dx, \qquad x \in \R^n,
	$$
	where the supremum runs over all cubes $Q$ in $\R^n$ with sides parallel to the axes of coordinates and $f_Q = \frac{1}{|Q|} \int_Q f$. 
	
	In this case, the analog of \eqref{ForClaim-1} reads as
	\begin{equation}\label{ForClaim-1new}
	\|f\|_{ (L^p(\R^n), \text{BMO}(\R^n))_{\frac{s p}{n}, p}} \lesssim	\|f\|_{(L^p(\R^n), \dot{H}^{\frac{n}{p}, p}(\R^n))_{\frac{s p}{n}, p}}  
	\end{equation}
	for all $s \in (0, \frac{n}{p})$.
	
	We make the following claim, uniformly with respect to $s \in (0, \frac{n}{p})$, 
	\begin{equation}\label{ClaimBMO}
		(L^p(\R^n), \text{BMO}(\R^n))_{\frac{s p}{n}, p} \hookrightarrow s^{-\frac{1}{p}}  (n- s p) L^{p^{*}, p}(\R^n).
	\end{equation}
	Assuming momentarily the validity of this claim, then \eqref{BBMFract} follows easily from  \eqref{ForClaim-1new} and \eqref{ForClaim0} (with $t = \frac{n}{p}$).
	
	It remains to show \eqref{ClaimBMO}. To proceed with, we will make use of the known estimate (cf. \cite[Corollary 3.3]{JawerthTorchinsky})
	$$
		K(u, f; L^p(\R^n), \text{BMO}(\R^n)) \approx \bigg(\int_0^{u^p} (f^{\#*}(v))^p \, dv \bigg)^{\frac{1}{p}}.
	$$
	Accordingly, by Fubini's theorem,
	\begin{align}
		\|f\|_{(L^p(\R^n), \text{BMO}(\R^n))_{\frac{s p}{n}, p}}^p &= \int_0^\infty (u^{-\frac{s p}{n}} K(u, f; L^p(\R^n), \text{BMO}(\R^n)) )^p \frac{du}{u} \nonumber \\
		& \approx \int_0^\infty u^{-\frac{s p}{n}} \int_0^{u} (f^{\#*}(v))^p \, dv \frac{du}{u} \label{10} \\
		& = \frac{n}{s p} \int_0^\infty  (v^{\frac{1}{p^{*}}}f^{\#*}(v))^p \frac{dv}{v}.  \nonumber
	\end{align}
	
%
	
	We can compare $f^{\#}$ and $f^{**}$ via the following weak-type estimate (cf. \cite[Proposition 8.10, p. 398]{BennettSharpley})
	$$
		(f-f_\infty)^{**}(u) \lesssim \int_u^\infty f^{\#*}(v) \frac{dv}{v} 
	$$
	where $f_\infty := \lim_{|Q| \to \infty} \frac{1}{|Q|} \int_Q f$. Note that $f_\infty = 0$ since $f \in L^p(\R^n)$. Accordingly, by Hardy's inequality (see, e.g., \cite[p. 196]{SteinWeiss}),
	\begin{align*}
		\|f\|_{L^{p^{*}, p}(\R^n)} &\leq \bigg(\int_0^\infty [u^{\frac{1}{p^*}} f^{**}(u)]^p \frac{du}{u} \bigg)^{\frac{1}{p}} \\
		& \lesssim  \bigg(\int_0^\infty \bigg(u^{\frac{1}{p^*}}  \int_u^\infty f^{\#*}(v) \frac{d v}{v} \bigg)^p \frac{du}{u} \bigg)^{\frac{1}{p}} \\
		& \lesssim p^* \bigg(\int_0^\infty u^{\frac{p}{p^*}} (f^{\#*}(u))^p \frac{du}{u} \bigg)^{\frac{1}{p}} \\
		& \approx s^{\frac{1}{p}} p^{*} \|f\|_{(L^p(\R^n), \text{BMO}(\R^n))_{\frac{s p}{n}, p}}
	\end{align*}
	where we have used \eqref{10} in the last step. 
	The proof of \eqref{ClaimBMO} is complete. 
	\\
	
	\textsc{Case 3:} Assume $t = \frac{n}{p}$ and $s \to t^{-}$. Let $t_0 \in (0, t)$ be fixed and choose $\theta \in (0, 1)$ such that $s = (1-\theta) t_0 + \theta t$. Then there exists a constant $C >0$, which is independent of $s$, such that
	\begin{equation}\label{3.1}
		\|f\|_{(\dot{H}^{t_0, p}(\R^n), \dot{H}^{t, p}(\R^n))_{\theta, p}} \leq C \,  \|f\|_{\dot{W}^{s, p}(\R^n), t}
	\end{equation}
	Indeed, since (see, e.g., \cite[Section 6.4, pp. 149--153]{BerghLofstrom})
	$$\dot{B}^{t_0}_{p, \min\{p, 2\}} (\R^n) = (L^p(\R^n), \dot{H}^{t, p}(\R^n))_{\frac{t_0}{t}, \min\{p, 2\}}$$
	and
	$$\dot{B}^{t_0}_{p, \min\{p, 2\}} (\R^n) \hookrightarrow \dot{H}^{t_0, p} (\R^n),$$  
	we have
	\begin{equation}\label{3.2}
	 ( (L^p(\R^n), \dot{H}^{t, p(}\R^n))_{\frac{t_0}{t}, \min\{p, 2\}}, \dot{H}^{t, p}(\R^n))_{\theta, p}	\hookrightarrow (\dot{H}^{t_0, p}(\R^n), \dot{H}^{t, p}(\R^n))_{\theta, p}
	\end{equation}
	with related embedding constant independent of $s$. Furthermore, in virtue of the sharp version of the reiteration formula given in Lemma \ref{LemmaKMX2}(ii) and \eqref{OrderedCouplesCommute} (and taking into account that $\theta \to 1^{-}$), the following holds
	$$
		(L^p(\R^{n}), \dot{H}^{t, p}(\R^n))_{1- (1-\theta)(1-\frac{t_0}{t}), p} \hookrightarrow ( (L^p(\R^n), \dot{H}^{t, p}(\R^n))_{\frac{t_0}{t}, \min\{p, 2\}}, \dot{H}^{t, p}(\R^n))_{\theta, p}
		$$
		uniformly with respect to $\theta$, or equivalently, by Lemma \ref{LemmaIntFractGS}, 
		\begin{equation}\label{3.3}
			\|f\|_{( (L^p(\R^n), \dot{H}^{t, p}(\R^n))_{\frac{t_0}{t}, \min\{p, 2\}}, \dot{H}^{t, p}(\R^n))_{\theta, p}} \lesssim \|f\|_{\dot{W}^{s, p}(\R^n), t}.
		\end{equation}
		Putting together \eqref{3.2} and \eqref{3.3}, the estimate \eqref{3.1} is achieved. 
		
		By the well-known fact (see, e.g., \cite[Theorem 1(i), Section 5.2.3, p. 242]{Triebel83}) that $(-\Delta)^{\frac{t_0}{2}}$ acts as an isomorphism from $\dot{H}^{t_0, p}(\R^n)$ onto $L^p(\R^n)$ and from $\dot{H}^{t, p}(\R^n)$ onto $\dot{H}^{t-t_0, p}(\R^n)$, we easily derive that
		$$
			K(u, f; \dot{H}^{t_0, p}(\R^n), \dot{H}^{t, p}(\R^n)) \approx K(u, (-\Delta)^{\frac{t_0}{2}} f; L^p(\R^n), \dot{H}^{t-t_0, p}(\R^n))
		$$
		and thus
		$$
			\|f\|_{(\dot{H}^{t_0, p}(\R^n), \dot{H}^{t, p}(\R^n))_{\theta, p}}  \approx \|(-\Delta)^{\frac{t_0}{2}} f\|_{(L^p(\R^n), \dot{H}^{t-t_0}_p(\R^n))_{\theta, p}}
		$$
		where the hidden constants are independent of $\theta$. According to Lemma \ref{LemmaIntFractGS}, the last estimate can be equivalently rewritten as
		\begin{equation}\label{3.4}
			\|f\|_{(\dot{H}^{t_0, p}(\R^n), \dot{H}^{t, p}(\R^n))_{\theta, p}}  \approx \|(-\Delta)^{\frac{t_0}{2}} f\|_{\dot{W}^{\theta (t-t_0), p}(\R^n), t-t_0}.
		\end{equation}
		Since $0 < \theta (t-t_0) < t-t_0 < t = \frac{n}{p}$, we can apply Case 1 so that (take into account that $f \in W^{s, p}(\R^n)$ and, in particular, $f \in H^{t_0, p}(\R^n)$ since $t_0 < s$)
		\begin{equation}\label{3.5}
			\|(-\Delta)^{\frac{t_0}{2}} f \|_{L^{\frac{n p}{n-(s-t_0)p} , p}(\R^n)} \lesssim (1-\theta)^{\frac{1}{p}}  \|(-\Delta)^{\frac{t_0}{2}} f\|_{\dot{W}^{\theta (t-t_0), p}(\R^n), t-t_0}.
		\end{equation}
		
		As usual, we denote by $I_\alpha, \, \alpha \in (0, n),$ the Riesz potential operator
		$$
			(I_\alpha f)(x) :=  c_{n, \alpha} \int_{\R^n} \frac{f(y)}{|x-y|^{n-\alpha}} \, dy, \qquad x \in \R^n.
		$$  		
		By well-known mapping properties of $I_\alpha$ (cf. \cite[Chapter V, Section 1.2]{Stein})
		\begin{equation}\label{3.6}
			I_{t_0}:  L^{\frac{n p}{n-(s-t_0)p}, p}(\R^n) \to L^{p^{*}, p}(\R^n).
		\end{equation}
		However, our method requires to control the norm of this operator with respect to $s$. Accordingly, to make the exposition self-contained, we provide below a detailed proof of \eqref{3.6} using standard techniques. It follows from O'Neil's inequality \cite[Lemma 1.5]{Oneil} that
		$$
			(I_{t_0} f)^*(u) \lesssim \int_u^\infty f^{**}(v) v^{\frac{t_0}{n}-1} \, dv
		$$
		and thus, by Hardy's inequalities (cf. \cite[p. 196]{SteinWeiss}),
		\begin{align*}
			\|I_{t_0} f\|_{L^{p^{*}, p}(\R^n)} & \lesssim \bigg(\int_0^\infty  u^{\frac{p}{p^*}}  \bigg(\int_u^\infty f^{**}(v) v^{\frac{t_0}{n}-1} \, dv \bigg)^{p} \frac{du}{u}  \bigg)^{\frac{1}{p}} \\
			& \lesssim p^{*} \bigg(\int_0^\infty \big(u^{\frac{n-(s-t_0) p}{n p}} f^{**}(u) \big)^p   \frac{du}{u}  \bigg)^{\frac{1}{p}} \\
			& \lesssim \frac{p^{*}}{1-\frac{1}{p} + \frac{s-t_0}{n}} \bigg(\int_0^\infty (u^{\frac{n-(s-t_0) p}{n p}} f^*(u))^p \frac{du}{u}  \bigg)^{\frac{1}{p}} \\
			& \approx \frac{1}{n- s p} \|f\|_{L^{\frac{n p}{n-(s-t_0) p}, p}(\R^n)}
		\end{align*}
		where we have used $s \to t^{-} = \big(\frac{n}{p} \big)^{-}$ in the last step. This proves (cf. \eqref{3.6})
		\begin{equation}\label{3.7}
			\|I_{t_0}\|_{L^{\frac{n p}{n-(s-t_0) p}, p}(\R^n) \to L^{p^{*}, p}(\R^n)} \lesssim (n- s p)^{-1}
		\end{equation}
		uniformly with respect to $s \to t^{-}$.
		
		Since $I_{t_0} (-\Delta)^{\frac{t_0}{2}} f = f$, in light of \eqref{3.7}, \eqref{3.5}, \eqref{3.4} and  \eqref{3.1} we get
		\begin{align*}
			\|f\|_{L^{p^{*}, p}(\R^n)} &\lesssim (n- s p)^{-1} \| (-\Delta)^{\frac{t_0}{2}} f \|_{L^{\frac{n p}{n-(s-t_0) p}, p}(\R^n)} \\
			& \lesssim  (n- s p)^{\frac{1}{p}-1}  \|(-\Delta)^{\frac{t_0}{2}} f\|_{\dot{W}^{\theta (t-t_0), p}(\R^n), t-t_0} \\
			& \approx (n- s p)^{\frac{1}{p}-1} \|f\|_{(\dot{H}^{t_0, p}(\R^n), \dot{H}^{t, p}(\R^n))_{\theta, p}} \\
			& \lesssim (n- s p)^{\frac{1}{p}-1} \|f\|_{\dot{W}^{s, p}(\R^n), t}.
		\end{align*}
		The proof is finished.
\end{proof}

As an immediate consequence of Theorem \ref{ThmBBMt} and the well-known inequality between Lorentz spaces (cf. \cite[p. 192]{SteinWeiss})
$$
	\Big(\frac{q}{p} \Big)^{\frac{1}{q}} \|f\|_{L^{p, q}(\R^n)} \leq \Big(\frac{r}{p} \Big)^{\frac{1}{r}} \|f\|_{L^{p, r}(\R^n)}, \qquad r < q,
$$
we obtain the following

\begin{corollary}
	Let $0 < s < t \leq 1, 1 < p < \frac{n}{s}$ and $p^{*} = \frac{n p}{n-s p}$. Assume $f \in W^{s, p}(\R^n)$. Then there exists $C = C(n, p, t) > 0$ such that
	\begin{equation*}
	\|f\|^p_{L^{p^{*}}(\R^n)} \leq C \, s (t-s) (n- s p)^{1-p} \|f\|_{\dot{W}^{s, p}(\R^n), t}^p.
	\end{equation*}
\end{corollary}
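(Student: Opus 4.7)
The plan is to derive the corollary by chaining the sharp Lorentz embedding displayed just before the statement with the inequality from Theorem \ref{ThmBBMt}, and then to bookkeep how the resulting powers of $(n-sp)$ combine. Since $L^{p^{*}}(\mathbb{R}^{n})=L^{p^{*},p^{*}}(\mathbb{R}^{n})$ and $p<p^{*}$, I can apply the stated inequality with first index $p^{*}$, secondary index $q=p^{*}$ on the left, and secondary index $r=p$ on the right; this yields
\[
\|f\|_{L^{p^{*}}(\mathbb{R}^{n})}\;\leq\;\Bigl(\tfrac{p}{p^{*}}\Bigr)^{\!1/p}\,\|f\|_{L^{p^{*},p}(\mathbb{R}^{n})}.
\]

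Raising to the $p$-th power and recalling $p^{*}=\frac{np}{n-sp}$, the prefactor becomes $p/p^{*}=(n-sp)/n$, so
\[
\|f\|_{L^{p^{*}}(\mathbb{R}^{n})}^{p}\;\leq\;\tfrac{n-sp}{n}\,\|f\|_{L^{p^{*},p}(\mathbb{R}^{n})}^{p}.
\]
Now I would plug in Theorem \ref{ThmBBMt}, which provides
\[
\|f\|_{L^{p^{*},p}(\mathbb{R}^{n})}^{p}\;\leq\;C\,\tfrac{s(t-s)}{(n-sp)^{p}}\,\|f\|_{\dot{W}^{s,p}(\mathbb{R}^{n}),t}^{p},
\]
with $C=C(n,p,t)$. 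Multiplying the two estimates gives
\[
\|f\|_{L^{p^{*}}(\mathbb{R}^{n})}^{p}\;\leq\;\tfrac{C}{n}\,s(t-s)\,(n-sp)^{1-p}\,\|f\|_{\dot{W}^{s,p}(\mathbb{R}^{n}),t}^{p},
\]
which is precisely the claimed bound after absorbing $1/n$ into the constant.

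There is essentially no obstacle here: the hypotheses of Theorem \ref{ThmBBMt} (namely $0<s<t\leq 1$, $1<p<n/s$, and $f\in W^{s,p}(\mathbb{R}^{n})$) are exactly those assumed in the corollary, and the Lorentz inequality is cited directly from \cite{SteinWeiss}. The only small point worth checking is that one must use the strict inequality $p<p^{*}$ (which is automatic since $0<sp<n$) in order to apply the Lorentz embedding with $r=p<p^{*}=q$; this is what ensures the correct gain of a factor $(n-sp)$ that reduces the exponent on $(n-sp)$ from $p$ to $p-1$ in the final estimate.
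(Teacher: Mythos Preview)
Your proof is correct and follows essentially the same approach as the paper, which states that the corollary is an immediate consequence of Theorem \ref{ThmBBMt} combined with the Lorentz embedding inequality. Your bookkeeping of the constants is accurate and the verification that $p<p^{*}$ (needed to apply the Lorentz inequality with $r=p$, $q=p^{*}$) is a nice detail.
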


\section*{Appendix B: Sharp relationships between Sobolev--Triebel--Lizorkin
spaces\label{SectionAppendix}}

The methodology proposed in \cite{Braz} relies on sharp comparison estimates
between $\dot{W}^{s,p}({\mathbb{R}}^{n}),\dot{F}_{p,2}^{s}({\mathbb{R}}^{n})$
and $\dot{F}_{p,p}^{s}({\mathbb{R}}^{n})$ using tools from harmonic analysis.
We now provide an approach to these results relying on the interpolation
techniques presented in Section \ref{SectionMethod}.

Let $s\in(0,1)$ and $p\in(1,\infty)$. Recall that (cf. Lemma
\ref{TableCoincidences})
\begin{equation}
\dot{W}^{s,p}({\mathbb{R}}^{n})=\dot{F}_{p,p}^{s}({\mathbb{R}}^{n}%
),\label{GTL}%
\end{equation}%
\[
\dot{F}_{p,2}^{s}({\mathbb{R}}^{n})\hookrightarrow\dot{W}^{s,p}({\mathbb{R}%
}^{n}),\qquad p\geq2,
\]
and
\[
\dot{W}^{s,p}({\mathbb{R}}^{n})\hookrightarrow\dot{F}_{p,2}^{s}({\mathbb{R}%
}^{n}),\qquad p\leq2.
\]
The sharp behavior of the related equivalence constants in these embeddings
plays a key role in the Bourgain--Brezis--Mironescu method. In particular, note
that the family of semi-norms $\{\Vert\cdot\Vert_{\dot{F}_{p,p}^{s}%
({\mathbb{R}}^{n})}:s\in(0,1)\}$ converges to $\Vert\cdot\Vert_{\dot{F}%
_{p,p}^{1}({\mathbb{R}}^{n})}$ as $s\rightarrow1^{-}$, however this is not the
case for $\{\Vert\cdot\Vert_{\dot{W}^{s,p}({\mathbb{R}}^{n})}:s\in(0,1)\}$
(cf. \eqref{Constancy}). In particular, the hidden equivalence constants in
\eqref{GTL} must exhibit a certain blow up as $s$ approaches the limiting
values $0$ and $1$. The precise answer is contained in the following

\begin{theorem}
\label{TheoremBSY1} Let $0 < s < t < \infty$ and $p \in(1, \infty)$. Then
there exists a positive constant $C$, which is independent of $s$, such that

\begin{enumerate}
\item[(a)]
\begin{align}
C^{-1}\bigg(\frac{1}{s^{\frac{1}{\max\{p,2\}}}}+\frac{1}{(t-s)^{\frac{1}%
{\max\{p,2\}}}}\bigg)\Vert f\Vert_{\dot{F}_{p,p}^{s}({\mathbb{R}}^{n})}  &
\leq\Vert f\Vert_{\dot{W}^{s,p}({\mathbb{R}}^{n}),t}\nonumber\\
&  \hspace{-4cm}\leq C\bigg(\frac{1}{s^{\frac{1}{\min\{p,2\}}}}+\frac
{1}{(t-s)^{\frac{1}{\min\{p,2\}}}}\bigg)\Vert f\Vert_{\dot{F}_{p,p}%
^{s}({\mathbb{R}}^{n})}, \label{BSYEstim}%
\end{align}

\item[(b)]
\[
C\bigg(\frac{1}{s^{\frac{1}{p}}}+\frac{1}{(t-s)^{\frac{1}{p}}}\bigg)\Vert f\Vert_{\dot
{F}_{p,2}^{s}({\mathbb{R}}^{n})}\leq\Vert f\Vert_{\dot{W}^{s,p}({\mathbb{R}%
}^{n}),t},\qquad p\leq2,
\]

\item[(c)]
\[
\Vert f\Vert_{\dot{W}^{s,p}({\mathbb{R}}^{n}),t}\leq C\bigg(\frac{1}{s^{\frac{1}{p}}%
}+\frac{1}{(t-s)^{\frac{1}{p}}}\bigg)\Vert f\Vert_{\dot{F}_{p,2}^{s}({\mathbb{R}}%
^{n})},\qquad p\geq2.
\]

\end{enumerate}
\end{theorem}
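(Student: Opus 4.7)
My strategy will be to reduce all three inequalities to sharp interpolation estimates on scalar sequence spaces via a Littlewood--Paley retraction. As a first step, I would exploit the identifications in Lemma \ref{TableCoincidences}(iii) (together with $L^p(\R^n) = \dot{F}^{0}_{p,2}(\R^n)$) to realize $L^p(\R^n)$, $\dot{H}^{t,p}(\R^n)$, and $\dot{F}^{s}_{p,q}(\R^n)$ as retracts of the vector-valued Lebesgue spaces $L^p(\R^n; \ell_2(\Z))$, $L^p(\R^n; \ell^{t}_2(\Z))$, and $L^p(\R^n; \ell^{s}_q(\Z))$ via $f \mapsto (\Delta_j f)_{j \in \Z}$, with retraction constants independent of $s$. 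Combining this with Lemma \ref{LemmaIntFractGS}, the standard retraction principle for real interpolation, and Lemma \ref{LemmaInterpolationLpVector} yields
\begin{equation*}
\Vert f \Vert_{\dot{W}^{s,p}(\R^n), t} \approx \Vert f \Vert_{(L^p(\R^n), \dot{H}^{t,p}(\R^n))_{s/t, p}} \approx \Vert (\Delta_j f)_{j} \Vert_{L^p(\R^n;\, (\ell_2(\Z), \ell^{t}_2(\Z))_{s/t, p})}
\end{equation*}
with equivalence constants independent of $s$. Setting $\theta = s/t$, the three parts thus reduce to sharp comparisons of $\Vert \xi \Vert_{(\ell_2(\Z), \ell^{t}_2(\Z))_{\theta, p}}$ with $\Vert \xi \Vert_{\ell^{s}_q(\Z)}$, taking $q = p$ for (a) and $q = 2$ for (b) and (c).

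Part (a) will then follow directly from Lemma \ref{Lemma4} applied with inner index $q = 2$ and outer index $p$: its two-sided bounds display exactly the exponents $1/\max\{p, 2\}$ (lower) and $1/\min\{p, 2\}$ (upper). Under $\theta = s/t$, the elementary identity $\theta^{-1/r} + (1-\theta)^{-1/r} = t^{1/r}\bigl(s^{-1/r} + (t-s)^{-1/r}\bigr)$ converts these into the quantities appearing in (a), the factor $t^{1/r}$ being absorbed into the $t$-dependence allowed for the constant $C$.

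For (b) and (c) a direct call to Lemma \ref{Lemma4} only gives $\Vert \xi \Vert_{\ell^{s}_p}$ on the right, not $\Vert \xi \Vert_{\ell^{s}_2}$, so I would interpose $(\ell_2, \ell^{t}_2)_{\theta, 2}$ as an intermediate reference point. In the diagonal case $p = q = 2$, Lemma \ref{Lemma4} gives
\begin{equation*}
\Vert \xi \Vert_{(\ell_2(\Z), \ell^{t}_2(\Z))_{\theta, 2}} \approx (\theta(1-\theta))^{-1/2} \Vert \xi \Vert_{\ell^{s}_2(\Z)}.
\end{equation*}
The sharp second-index monotonicity supplied by Lemma \ref{LemmaExtrapolSharp}(i) then produces
\begin{equation*}
\Vert \xi \Vert_{(\ell_2, \ell^{t}_2)_{\theta, p}} \gtrsim (\theta(1-\theta))^{-1/p} \Vert \xi \Vert_{\ell^{s}_2} \quad (p \leq 2),
\end{equation*}
\begin{equation*}
\Vert \xi \Vert_{(\ell_2, \ell^{t}_2)_{\theta, p}} \lesssim (\theta(1-\theta))^{-1/p} \Vert \xi \Vert_{\ell^{s}_2} \quad (p \geq 2).
\end{equation*}
Substituting $\theta = s/t$ and invoking the elementary equivalence $(\theta(1-\theta))^{-1/p} \approx s^{-1/p} + (t-s)^{-1/p}$ (valid with constants depending only on $t$) will then produce (b) and (c) respectively.

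The main obstacle will be essentially bookkeeping: I must verify that the Littlewood--Paley retraction constants are genuinely independent of $s$, and I must track the elementary equivalences that convert $\theta^{-1/r} + (1-\theta)^{-1/r}$ and $(\theta(1-\theta))^{-1/r}$ into $s^{-1/r} + (t-s)^{-1/r}$ modulo $t$-dependent constants. Once these reductions are in place, the interpolation and extrapolation machinery collected in Section \ref{SectionMethod} will automatically deliver all three sharp inequalities.
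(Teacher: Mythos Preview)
Your proposal is correct and follows essentially the same route as the paper: reduce via Lemma \ref{LemmaIntFractGS} and the Littlewood--Paley retraction to the vector-valued pair $(L^p(\R^n;\ell_2(\Z)),L^p(\R^n;\ell_2^t(\Z)))$, apply Lemma \ref{LemmaInterpolationLpVector}, and then use Lemma \ref{Lemma4} for part (a) and the combination of Lemma \ref{Lemma4} (diagonal case) with Lemma \ref{LemmaExtrapolSharp}(i) for parts (b) and (c). The paper's proof is organized in exactly this way, including the intermediate step $\|\xi\|_{\ell_2^s}\approx (s(t-s))^{1/2}\|\xi\|_{(\ell_2,\ell_2^t)_{s/t,2}}\lesssim (s(t-s))^{1/p}\|\xi\|_{(\ell_2,\ell_2^t)_{s/t,p}}$ for $p\leq 2$ that you describe.
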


\begin{remark}
The previous result can be understood as a generalization to higher order
smoothness of \cite[Theorems 1.2 and 1.5]{Braz} (which corresponds to the
classical case $t=1$, cf. \eqref{FractModuInt}).
\end{remark}

\begin{proof}
[Proof of Theorem \ref{TheoremBSY1}](a): By Lemma \ref{LemmaIntFractGS}, for
every $s\in(0,t)$,
\begin{equation}
\Vert f\Vert_{\dot{W}^{s,p}({\mathbb{R}}^{n}),t}\approx\Vert f\Vert
_{(L^{p}({\mathbb{R}}^{n}),\dot{H}^{t,p}({\mathbb{R}}^{n}))_{\frac{s}{t},p}}.
\label{ThmProof1}%
\end{equation}
The interpolation space given in the right-hand side of the previous estimate
can be explicitly computed via the retraction method. Specifically,
$(L^{p}({\mathbb{R}}^{n}),\dot{H}^{t,p}({\mathbb{R}}^{n}))$ can be identified
with the vector-valued pair $(L^{p}({\mathbb{R}}^{n};\ell_{2}({{\mathbb{Z}}%
})),\ell^{p}({\mathbb{R}}^{n};\ell_{2}^{t}({{\mathbb{Z}}})))$ and thus, by
Lemma \ref{LemmaInterpolationLpVector},
\begin{equation}
\Vert(f_{j})\Vert_{(L^{p}({\mathbb{R}}^{n};\ell_{2}({{\mathbb{Z}}}%
)),L^{p}({\mathbb{R}}^{n};\ell_{2}^{t}({{\mathbb{Z}}})))_{\frac{s}{t},p}%
}\approx\Vert(f_{j})\Vert_{L^{p}({\mathbb{R}}^{n};(\ell_{2}({{\mathbb{Z}}%
}),\ell_{2}^{t}({{\mathbb{Z}}}))_{\frac{s}{t},p})}. \label{ThmProof1new}%
\end{equation}
Furthermore, according to Lemma \ref{Lemma4},
\begin{align}
\bigg(\frac{1}{s^{\frac{1}{\max\{p,2\}}}}+\frac{1}{(t-s)^{\frac{1}%
{\max\{p,2\}}}}\bigg)\Vert\xi\Vert_{\ell_{p}^{s}({{\mathbb{Z}}})}  &
\lesssim\Vert\xi\Vert_{(\ell_{2}({{\mathbb{Z}}}),\ell_{2}^{t}({{\mathbb{Z}}%
}))_{\frac{s}{t},p}}\nonumber\\
&  \hspace{-4cm}\lesssim\bigg(\frac{1}{s^{\frac{1}{\min\{p,2\}}}}+\frac
{1}{(t-s)^{\frac{1}{\min\{p,2\}}}}\bigg)\Vert\xi\Vert_{\ell_{p}^{s}%
({{\mathbb{Z}}})}. \label{ThmProof1new2}%
\end{align}
As a combination of \eqref{ThmProof1new} and \eqref{ThmProof1new2},
\begin{align}
\bigg(\frac{1}{s^{\frac{1}{\max\{p,2\}}}}+\frac{1}{(t-s)^{\frac{1}%
{\max\{p,2\}}}}\bigg)\Vert(f_{j})\Vert_{L^{p}({\mathbb{R}}^{n};\ell_{p}%
^{s}({{\mathbb{Z}}}))}  &  \lesssim\Vert(f_{j})\Vert_{(L^{p}({\mathbb{R}}%
^{n};\ell_{2}({{\mathbb{Z}}})),L^{p}({\mathbb{R}}^{n};\ell_{2}^{t}%
({{\mathbb{Z}}})))_{\frac{s}{t},p}}\nonumber\\
&  \hspace{-5cm}\lesssim\bigg(\frac{1}{s^{\frac{1}{\min\{p,2\}}}}+\frac
{1}{(t-s)^{\frac{1}{\min\{p,2\}}}}\bigg)\Vert(f_{j})\Vert_{L^{p}({\mathbb{R}%
}^{n};\ell_{p}^{s}({{\mathbb{Z}}}))}. \label{ThmProof1new3}%
\end{align}
By the well-known fact that $\dot{F}_{p,p}^{s}({\mathbb{R}}^{n})$ is a retract
of $L^{p}({\mathbb{R}}^{n};\ell_{p}^{s}({{\mathbb{Z}}}))$, we can put together
\eqref{ThmProof1new3} and \eqref{ThmProof1} to arrive at \eqref{BSYEstim}.

(b): Let $p\leq2$. Applying Lemmas \ref{Lemma4} and \ref{LemmaExtrapolSharp}%
(i),
\begin{equation}
\Vert\xi\Vert_{\ell_{2}^{s}({{\mathbb{Z}}})}\approx(s(t-s))^{\frac{1}{2}}%
\Vert\xi\Vert_{(\ell_{2}({{\mathbb{Z}}}),\ell_{2}^{t}({{\mathbb{Z}}}%
))_{\frac{s}{t},2}}\lesssim(s(t-s))^{\frac{1}{p}}\Vert\xi\Vert_{(\ell
_{2}({{\mathbb{Z}}}),\ell_{2}^{t}({{\mathbb{Z}}}))_{\frac{s}{t},p}}.
\label{ThmProof1new4}%
\end{equation}
Taking into account that $\dot{F}_{p,2}^{s}({\mathbb{R}}^{n})$ is a retract of
$L^{p}({\mathbb{R}}^{n};\ell_{2}^{s}({{\mathbb{Z}}}))$, it follows now from
\eqref{ThmProof1}, \eqref{ThmProof1new} and \eqref{ThmProof1new4} that
\[
(s(t-s))^{-\frac{1}{p}}\Vert f\Vert_{\dot{F}_{p,2}^{s}({\mathbb{R}}^{n}%
)}\lesssim\Vert f\Vert_{\dot{W}^{s,p}({\mathbb{R}}^{n}),t}.
\]

The proof of (c) follows similar ideas as those given in (b).
\end{proof}

\end{document}